\newtheorem{theorem}{Theorem}[section]
\newtheorem{proposition}[theorem]{Proposition}
\newtheorem{lemma}[theorem]{Lemma}
\newtheorem{corollary}[theorem]{Corollary}
\newtheorem{remark}[theorem]{Remark}
\theoremstyle{definition}
\newtheorem{definition}[theorem]{Definition}
\newcommand{\term}{\mathcal{T}}
\newcommand{\powset}[1]{\mathcal{P}(#1)}
\newcommand{\pcaa}{\mathcal{A}}
\newcommand{\mv}[2]{\operatorname{mv}(#1, #2)}
\newcommand{\vip}[1]{V_0^{\mathrm{f}}(#1)}
\newcommand{\vg}[1]{V_0^\Gamma(#1)}
\newcommand{\vga}{\vg{\pcaa}}
\newcommand{\vgt}{\vg{\term}}
\newcommand{\vt}{V(\term)}
\newcommand{\vdzero}{\Vdash_0}
\newcommand{\vdzg}{\Vdash_0^\Gamma}
\newcommand{\vdone}{\Vdash_1}
\newcommand{\stab}[2]{\operatorname{Stab}_{#1}(#2)}
\newcommand{\cip}{C^{\mathrm{f}}}
\newcommand{\cg}{C^\Gamma}
\newcommand{\pcak}{\mathbf{k}}
\newcommand{\pcas}{\mathbf{s}}
\newcommand{\lmim}[1]{\operatorname{RED}(#1)}
\newcommand{\lmimn}[2]{\operatorname{RED}_{#1}(#2)}
\newcommand{\mvnnn}{\mv{\mathbb{N}^{\mathbb{N}}}{\mathbb{N}}}
\newcommand{\zf}{\mathbf{ZF}}
\newcommand{\izf}{\mathbf{IZF}}
\newcommand{\izfr}{\mathbf{IZF}_R}
\newcommand{\czf}{\mathbf{CZF}}
\newcommand{\czfm}{\mathbf{CZF}^-}
\newcommand{\czfexp}{\mathbf{CZF}_{\mathcal{E}}}
\newcommand{\czfpow}{\mathbf{CZF}_{\mathcal{P}}}
\newcommand{\cst}{\mathbf{CST}}
\newcommand{\cstm}{\mathbf{CST}^-}
\newcommand{\rdc}{\mathbf{RDC}}
\newcommand{\zfc}{\mathbf{ZFC}}
\newcommand{\powip}{\mathcal{P}_\pcaa^\mathrm{f}}
\title{CZF does not have the Existence Property}
\author{Andrew W Swan}
\begin{document}
\maketitle

\begin{abstract}
Constructive theories usually have interesting metamathematical
properties where explicit witnesses can be extracted from proofs of
existential sentences. For relational theories, probably the most
natural of these is the \emph{existence property}, EP, sometimes
referred to as the \emph{set existence property}. This states that
whenever $(\exists x)\,\phi(x)$ is provable, there is a formula
$\chi(x)$ such that $(\exists ! x)\;\phi(x) \wedge \chi(x)$ is
provable. It has been known since the 80's that EP holds for some
intuitionistic set theories and yet fails for $\izf$. Despite this, it
has remained open until now whether EP holds for the most well
known constructive set theory, $\czf$.  In this paper we show that EP
fails for $\czf$.
\end{abstract}

\section{Introduction}
\subsection{Existence Properties}
\label{intcst}
Constructive theories are known for having metamathematical properties
that are often not shared by stronger classical theories such as
$\zfc$. The principles below are amongst the most well known of
these properties.

Constructive mathematicians choose to interpret disjunctions and
existential quantifiers more strictly than classical mathematicians.
For the constructive mathematician, in order to know the disjunction
$\phi \vee \psi$, one must either know $\phi$ or know $\psi$. They
therefore often expect their formal theories to have the following
property:
\begin{definition}
A theory, $T$ has the \emph{disjunction property} (DP) if whenever
$T \vdash \phi \vee \psi$, either $T \vdash \phi$ or $T \vdash \psi$.
\end{definition}

In order to know $(\exists x)\,\phi(x)$, the constructive mathematician
must be able to ``construct'' some witness $a$ such that one knows
$\phi(a)$. We certainly know what it means to construct an element of
$\omega$: we must be able to write down an actual natural number. We
also know what it means to construct a function $\mathbb{N}
\rightarrow \mathbb{N}$: we must be able to able to find (a number
encoding) an algorithm whose graph is that function. Hence the
constructive mathematician expects their formal theories to have the
following properties. In the definitions below we assume that $T$ has
a constant $\omega$ such that $T$ proves that $\omega$ is the natural
numbers and for each $n$ a constant $\overline{n}$ such that $T$
proves $\overline{0}$ is empty and $\overline{n + 1}$ is the successor
of $\overline{n}$. (For any theory that could ``reasonably'' be called
a set theory, there will be at least a conservative extension with
these constants).

\begin{definition}
  $T$ has the \emph{numerical existence property} (NEP) if whenever $T
  \vdash (\exists x \in \omega)\,\phi(x)$ where $\phi(x)$ only has $x$
  free, there is some natural $n$ such that \[T \vdash
  \phi(\overline{n})\]
\end{definition}

\begin{definition}
  $T$ is closed under \emph{Church's Rule} (CR) if whenever $T \vdash
  (\forall x \in \omega)(\exists y \in \omega)\,\phi(x, y)$, where
  $\phi$ only has free variables $x$ and $y$, there is some natural
  $e$ such that
\[
T \vdash (\forall x \in \omega)\,\phi(x, \{\overline{e}\}(x))
\]
(where $\{e\}(x)$ denotes the result of applying the $e$th recursive
function to $x$)
\end{definition}

What it means to construct mathematical objects in general is 
less clear, but a common interpretation of this is that they
should at least be definable, in the sense below.
\begin{definition}
  $T$ has the \emph{existence property} (EP) if whenever $T \vdash
  (\exists x)\,\phi(x)$, where $\phi$ only has free variable $x$, there
  is some formula $\chi(x)$ that only has free variable $x$ such that
\[T \vdash (\exists ! x)\;\phi(x) \wedge \chi(x)\]
\end{definition}

\subsection{$\izf$, $\cst$, and $\czf$}

If one wants a theory with some of the metamathematical properties
appearing in section \ref{intcst} but has no other objections to
classical mathematics, one may be satisfied with the theory $\izf$,
which can be regarded as ``$\mathbf{ZF}$ without excluded middle.''

\begin{definition}
$\izf$ is the theory with (intuitionistic logic and) the following
  axioms:
\begin{enumerate}
\item Extensionality
\item Separation
\item Pairing
\item Union
\item Infinity
\item Power Set
\item $\in$-induction
\item Collection
\end{enumerate}
\end{definition}

Collection is the following schema (where $\phi$ may have other free
variables in addition to $x$ and $y$):
\[
(\forall x \in a)(\exists y)\,\phi(x, y) \;\rightarrow\; (\exists z)(\forall
x \in a)(\exists y \in z)\,\phi(x, y)
\]
Compare this with the schema (equivalent in $\zf$) Replacement:
\[
(\forall x \in a)(\exists ! y)\,\phi(x, y) \;\rightarrow\; (\exists z)(\forall
x \in a)(\exists y \in z)\,\phi(x, y)
\]

\begin{definition}
$\izfr$ is the set theory with the axioms of $\izf$
  except that it has Replacement instead of Collection.
\end{definition}

For technical reasons, in this paper we will in fact consider a
variation of the more usual definitions above that have bounded
quantifiers as primitives. That is, for every formula,
$\phi(x, z_1,\ldots,z_n)$, with free variables amongst $x,z_1,\ldots,z_n$
there are $n+1$-ary predicates $(\forall x \in
y)\,\phi(x,z_1,\ldots,z_n)$ and $(\exists x \in
y)\,\phi(x,z_1,\ldots,z_n)$ together with appropriate axioms for
bounded universal and bounded existential quantification. Note that
this is a conservative extension of the usual definitions above, and
therefore the properties mentioned in section \ref{intcst} hold in
these versions if and only if they hold in the usual versions.

$\izf$ is extremely powerful. In fact Friedman showed in
\cite{friedman73a} that it has the same consistency strength as
$\zf$. On the other hand, $\izf$ has most of the existence properties
we saw earlier.

Often one may be doing mathematics constructively for philosophical
reasons. One may be an intuitionist: one believes mathematical objects
only exist if they can be ``mentally constructed.'' One may be a
predicativist: one believes that a mathematical object cannot be
constructed until it is defined predicatively - that is without
quantifiers whose range includes the object being constructed.
In this case one needs to ensure that the axioms of the set theory are
constructively justified. There are (at least) two ways to go about
this:

\begin{enumerate}
\item Directly justify each axiom as ``true'' with philosophical
  reasoning
\item Find another theory that already has a strong constructive
  foundation and interpret your set theory into it
\end{enumerate}

Myhill in \cite{Myhill75} took the first approach, introducing the
following theories. Both of these are over a three sorted language
with sorts for numbers, sets, and partial functions.

\begin{definition}
  $\cstm$ is the theory with (intuitionistic logic and) the following
  axioms:
  \begin{enumerate}
  \item Extensionality (for sets)
  \item Bounded Separation (that is, separation for formulae where
    every quantifier is bounded)
  \item Pairing
  \item Union
  \item Exponentiation (that is, given any sets $A$ and $B$ there is a
    set containing precisely the functions $f:A \rightarrow B$)
  \item Replacement
  \item Axioms of Heyting Arithmetic for the number sort
  \end{enumerate}
\end{definition}

\begin{definition}
$\cst$ is the theory $\cstm$ together with relativised dependent
  choices $\rdc$.
\end{definition}

In particular Myhill rejected the power set axiom in favour of the
weaker exponentiation axiom because of the more predicative nature of
exponentiation. He chose bounded separation over full separation for
the same reason.

$\czf$ arose via the second approach in \cite{Aczel78} where Aczel
showed that set theory can be interpreted into the predicative
Martin-L\"{o}f type theory. Aczel also dropped the three sorted
approach of $\cst$, instead working over the same language as $\zf$.

\begin{definition}
  $\czf$ is the theory with (intuitionistic logic and) the following
  axioms

  \begin{enumerate}
  \item Extensionality
  \item Bounded Separation
  \item Pairing
  \item Union
  \item Strong Infinity
  \item Subset Collection: the schema
    \begin{multline*}
    (\exists c) (\forall u) ((\forall x \in a) (\exists y \in b)\, \psi(x, y, u)\;
    \rightarrow \\
    (\exists d \in c) ((\forall x \in a) (\exists y \in d)\, \psi(x, y, u)
    \;\wedge\; (\forall y \in d) (\exists x \in a)\, \psi(x, y, u)))
    \end{multline*}
  \item $\in$-induction
  \item Strong Collection: the schema
    \begin{multline*}
      (\forall x \in a) (\exists y) \,\phi(x, y) \;\rightarrow\; \\
      (\exists b)((\forall
      x \in a) (\exists y \in b)\, \phi(x, y) \;\wedge\; (\forall y \in b) (\exists
      x \in a)\, \phi(x, y))
    \end{multline*}
  \end{enumerate}
\end{definition}

As with $\izf$, in this paper we will in fact work with the
conservative extension of $\czf$ that also has predicates and axioms
for bounded quantification.

Subset collection implies exponentiation and is implied by power set
(see \cite{aczelrathjen}) and can be seen as an ``artifact'' of the
interpretation of set theory into type theory.
As an alternative to subset collection, one may instead assume the
equivalent fullness axiom. Given sets $A$ and $B$, define $\mv{A}{B}$
to be the class of multivalued functions as
\[
\mv{A}{B} := \{ R \subseteq A \times B \;|\; (\forall a \in A)(\exists
b \in B)\,(a, b) \in R \}
\]
The fullness axiom can then be stated as follows
\begin{displaymath}
(\forall A, B)(\exists C \subseteq \mv{A}{B})(\forall R)\;R \in
  \mv{A}{B} \,\rightarrow\, (\exists S \in C)\,(S \subseteq R)
\end{displaymath}
(For a more detailed discussion of the fullness axiom see
\cite{aczelrathjen}.)

One can see that the fullness axiom asserts the existence of sets for
which there is no apparent definition. We will prove that for the case
$A = \mathbb{N}^{\mathbb{N}}, B = \mathbb{N}$, there is no definable
$C$.

$\czf$ is stronger than $\cstm$ in two respects: replacement has been
strengthened to strong collection and exponentiation has been
strengthened to subset collection.

$\czf$ is regarded today as the standard set theory for formalising
constructive mathematics. This is because it is constructively valid
because of its interpretation into type theory and yet can be used to
prove mathematically interesting results that do not hold in weaker
theories. For example, in \cite{LubarskyRathjen} Lubarsky and Rathjen
showed that the theory $\czfexp$ that has only exponentiation in place
of subset collection does not prove that the Dedekind reals form a
set, whereas $\czf$ does prove that the Dedekind reals form a set.

\subsection{Existence Properties of these Set Theories}
\label{existingwork}
The properties DP, NEP, and CR work extremely well as
characterisations of constructive formal theories. None can hold for
consistent recursively axiomatisable theories that have excluded
middle, but on the other hand they hold for a rich variety of
constructive theories.

In \cite{Myhill73} Friedman and Myhill showed that $\izfr$ (that is,
$\izf$ with replacement instead of collection), has the existence
property. In \cite{Myhill75}, Myhill showed the set theory $\cstm$
also has EP and also that both $\cstm$ and $\cst$ have DP and NEP,
leaving open whether $\cst$ has EP. In \cite{FriedmanScedrov83}
Friedman and \u{S}\u{c}edrov showed that $\izfr + \mathbf{RDC}$ has
EP, establishing that even set theories with choice principles can
have EP.

Beeson then developed $q$-realizability, allowing him to show in
\cite{beeson85} that NEP, DP, and CR hold for $\izf$ and $\izf +
\mathbf{RDC}$.  Rathjen developed realizability with truth based
partly on Beeson's methods to show in \cite{rathjen05} and
\cite{rathjen08} that DP, NEP, CR and other properties hold for a wide
variety of intuitionistic set theories including $\czf$, $\czf +
\mathbf{REA}$, $\izf$, $\izf + \mathbf{REA}$ with any combination of
the axioms $\mathbf{MP}$, $\mathbf{AC_\omega}$, $\mathbf{DC}$,
$\mathbf{RDC}$ and $\mathbf{PAx}$.

One can see that EP does not work so well as a characterisation
of constructive theories as the other properties we have seen. As
remarked in \cite{rathjen05} EP can hold for classical
theories, even extensions of $\mathbf{ZFC}$. On the other hand,
Friedman and \u{S}\u{c}edrov showed in \cite{FriedmanScedrov85} that
$\izf$ \emph{does not} have EP.

Friedman and \u{S}\u{c}edrov's proof that EP fails for $\izf$ makes
use of full separation and collection. Since $\izfr$ does have EP, it
might seem reasonable to think that collection is responsible for the
failure of EP and the use of full separation is only
incidental. However due to recent work by Rathjen, this turns out not
to be the case. Set theories with collection but only bounded
separation can have EP.

In \cite{rathjen11} Rathjen defined the following two variations on
EP,
\begin{definition}
  \begin{enumerate}
  \item $T$ has the \emph{weak existence property}, wEP, if whenever
    $\phi(x)$ is a formula with $x$ the only free variable and 
    \[ T \vdash (\exists x)\, \phi(x) \]
    there is some formula $\chi(x)$ having at most the free variable
    $x$ such that
    \begin{eqnarray*}
      T & \vdash & (\exists ! x)\,\chi(x) \\
      T & \vdash & (\forall x)\,(\chi(x) \rightarrow (\exists u)\, u \in x) \\
      T & \vdash & (\forall x)\,(\chi(x) \rightarrow (\forall u \in x)\,
        \phi(u)) \\
    \end{eqnarray*}
  \item $T$ has the \emph{uniform weak existence property}, uwEP, if
    whenever $\phi(u, x)$ is a formula having at most $x$ and $u$ as
    free variables and
    \[ T \vdash (\forall u) (\exists x)\, \phi(u, x) \]
    there is some formula $\chi(u, x)$ having at most the free variables
    $u, x$ such that
    \begin{eqnarray*}
      T & \vdash & (\forall u) (\exists ! x)\, \chi(u, x) \\
      T & \vdash & (\forall u) (\forall x)\,(\chi(u, x) \rightarrow (\exists
        z)\, z \in x) \\
      T & \vdash & (\forall u) (\forall x)\,(\chi(u, x) \rightarrow (\forall
        z \in x)\, \phi(u, z)) \\
    \end{eqnarray*}
  \end{enumerate}
\end{definition}

As remarked in \cite{rathjen11}, by analysing Friedman and
\u{S}\u{c}edrov's proof in \cite{FriedmanScedrov85} one can see that
$\izf$ doesn't even have wEP. On the other hand any extension of
$\mathbf{ZF}$ has uwEP - consider $V_\alpha$ where $\alpha$ is the
least ordinal such that $V_\alpha$ contains a witness.

In \cite{rathjen11}, Rathjen refers to the theories $\czfm$, $\czfexp$
and $\czfpow$. $\czfm$ is $\czf$ without subset collection. $\czfexp$
is $\czfm$ with the exponentiation axiom. $\czfpow$ is $\czf$ together
with the power set axiom. All three of these theories have strong
collection, and yet Rathjen shows in \cite{rathjen11} that all three
have uwEP (and hence wEP). In that paper he refers to a paper in
preparation where he will show by using this result together with
ordinal analysis that these three theories in fact have EP.

$\czfpow$, which has EP, is simply $\izf$ with bounded separation in
place of full separation, so the use of full separation in Friedman
and \u{S}\u{c}edrov's proof must be essential. Furthermore, $\czf$
lies between $\czfexp$ and $\czfpow$, two theories both satisfying EP
and uwEP.

However, due to problems defining witnesses for the fullness axiom,
these proofs do not apply to $\czf$ itself. Rathjen goes so far as to
conjecture in \cite{rathjen11} that $\czf$ does not even have
wEP. In this paper we prove that this conjecture is correct. $\czf$
does not have wEP, and the fullness axiom is responsible.

\subsection{Pcas}
When defining realizability, one usually starts with a partial
combinatory algebra (pca).
\begin{definition}
  A pca, $\pcaa$ is a set $|\pcaa|$ together with a partial binary
  operation, $\cdot : |\pcaa| \times |\pcaa| \rightharpoondown |\pcaa|$
  referred to as \emph{application}, and distinguished elements,
  $\pcas$ and $\pcak$ satisfying the axioms below. Below, and
  throughout the paper we will write $\cdot (a, b)$ as $(a . b)$, or
  simply $a b$, and follow the convention that application is
  \emph{left associative}. That is, $a_1 a_2 \ldots a_n$ means
  $(\ldots((a_1 . a_2) . a_3)\ldots). a_n)$.

\begin{enumerate}
\item $\pcas \neq \pcak$
\item for all $a, b \in \pcaa$, $\pcak a b \simeq a$
\item for all $a, b \in \pcaa$, $\pcas a \downarrow, \pcas a b
  \downarrow$
\item for all $a, b, c \in \pcaa$, $\pcas a b c \simeq a c (b c)$
\end{enumerate}
\end{definition}

Recall the following from, for example \cite{vanoosten} or
\cite{beeson85}.

\begin{definition}
\label{termoveradef}
Given a pca, $\pcaa$, we define \emph{terms} over
$\pcaa$ inductively as follows

\begin{enumerate}
\item There is a countable supply of free variables, $x_i$, each of
  which is a term.
\item Each element, $a$ of $\pcaa$ is a term.
\item If $s$ and $t$ are terms, then the ordered pair, $\langle s, t
  \rangle$ is also a term. We write this as $(s.t)$.
\end{enumerate}

We say that a term is \emph{closed} if it contains no free variables.
\end{definition}

As before, we will usually write the term $(s.t)$ just as $s t$ and
follow the convention of left associativity.

\begin{definition}
\label{denotedef}
We define inductively what it means for a closed term, $s$, to
\emph{denote} $a \in \pcaa$
\begin{enumerate}
\item If $a' \in \pcaa$, then $a'$ denotes $a$ if and only if $a = a'$
\item $(s'.s'')$ denotes $a$ if and only if there are $a', a'' \in \pcaa$
  such that $s'$ denotes $a'$, $s''$ denotes $a''$, and $a'.a'' \simeq a$.
\end{enumerate}

If $t$ is a closed term and there is an $a \in \pcaa$ such that $t$
denotes $a$, we write $t \downarrow$.

If $t(x_1, \ldots, x_n)$ is an open term with free variables amongst
$x_1, \ldots, x_n$, we write $t \downarrow$ to mean that for every
$a_1, \ldots, a_n \in \pcaa$, $t(a_1, \ldots, a_n) \downarrow$. If
$t(x_1,\ldots,x_n)$ and $t'(x_1,\ldots,x_n)$ are open terms with free
variables amongst $x_1, \ldots, x_n$, we write $t(x_1,\ldots,x_n)
\simeq t'(x_1,\ldots,x_n)$ to mean that for all $a_1,\ldots,a_n \in
\pcaa$, $t(a_1,\ldots,a_n) \simeq t'(a_1,\ldots,a_n)$. That is,
$t(a_1,\ldots,a_n)$ denotes if and only if $t'(a_1,\ldots,a_n)$
denotes, and if they denote then they denote the same element of $\pcaa$.
\end{definition}

\begin{proposition}
  For any term, $t(x_1,\ldots,x_n)$, over $\pcaa$ (that may contain
  $x_1,\ldots,x_n$ as free variables and possibly more free variables
  in addition to these) there is a term $t^\ast$ that does not contain
  the free variables $x_1,\ldots,x_n$ and such that for all
  $a_1,\ldots,a_n \in \pcaa$, $t^\ast a_1 \ldots a_{n - 1} \downarrow$
  and
  \[
  t^\ast a_1 \ldots a_n \simeq t(a_1, \ldots, a_n)
  \]
\end{proposition}

We will write $t^\ast$ as $\lambda (x_1,\ldots,x_n).t(x_1,\ldots,x_n)$.

\begin{proposition}
  For any $\pcaa$ there are $y, y' \in \pcaa$ such that for all $f \in
  \pcaa$,
  \begin{enumerate}
  \item $y f \simeq f (y f)$
  \item $y' f \downarrow$ and for all $e \in \pcaa$, $(y' f) e \simeq
    f (y' f) e$
  \end{enumerate}
\end{proposition}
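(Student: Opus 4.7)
The plan is to construct $y$ and $y'$ as the usual Curry-style fixed point combinators, using the $\lambda^\ast$-abstraction operator from the preceding proposition to keep everything internal to $\pcaa$. The key fact I will exploit repeatedly is that, by that proposition, whenever $t$ is a $\lambda^\ast$-abstraction of at least two variables, applying $t$ to strictly fewer than all its variables yields a defined element, and a single reduction step is an honest $\simeq$-equality.

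For the ordinary fixed point combinator $y$, I would first introduce, in a context with a free variable $f$, the auxiliary term $w := (\lambda x)\, f(xx)$. By the $\lambda^\ast$ proposition, $w\downarrow$ and $ww \simeq f(ww)$. I then set $y := (\lambda f)\, ww$, noting that because $(\lambda f)$ is a single-variable abstraction, $yf \simeq ww$. Composing these two Kleene equalities and using the symmetry and transitivity of $\simeq$ gives $yf \simeq f(ww) \simeq f(yf)$, which is part (1).

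For the strong combinator $y'$, the same diagonal trick with one extra variable works. I would set $z := (\lambda x, e)\, f(xx)e$ and define $y' := (\lambda f)\, zz$. Now $z$ is a $\lambda^\ast$-abstraction of \emph{two} variables, so by the proposition $zz \downarrow$; therefore $y'f \simeq zz$ is actually defined, giving the required $y'f \downarrow$. For the second clause, applying $zz$ to an arbitrary $e \in \pcaa$ and unfolding the $\lambda^\ast$ by one reduction step yields $(zz)e \simeq f(zz)e$, and substituting $y'f$ for $zz$ gives $(y'f)e \simeq f(y'f)e$.

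The only mildly delicate point is book-keeping with Kleene equality $\simeq$: one has to check that each step is either a genuine reduction supplied by the $\lambda^\ast$ proposition or a substitution of a \emph{defined} term for a defined term, so that no spurious divergence creeps in. In particular, for $y'$ one must verify $y'f \downarrow$ \emph{before} rewriting $(y'f)e$, which is exactly where the choice of a two-variable abstraction (as opposed to one-variable, as for $y$) does the work. Beyond that observation, the argument is entirely routine.
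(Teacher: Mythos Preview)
Your construction is correct and is exactly the standard Curry-style argument; the paper itself does not supply a proof of this proposition but defers to the references (Beeson, van Oosten), where this same construction appears. There is nothing to compare beyond noting that your handling of the definedness of $y'f$ via the two-variable $\lambda^\ast$-abstraction is precisely the point those references make.
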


One can use this to construct pairing and projection operators that we
will refer to as $\mathbf{p}$, $\mathbf{p}_0$ and $\mathbf{p}_1$. We
will write $(e)_i$ to mean $\mathbf{p}_i e$ for $i = 0, 1$. One can
further define numerals that we will denote $\underline{n}$ for each
$n \in \omega$. All recursive functions can then be represented. See
chapter 6 of \cite{beeson85} or chapter 1 of \cite{vanoosten} for
details.

\subsection{The Model $V(\pcaa)$}
\label{vaint}

Realizability is one of the main tools in the study of intuitionistic
theories and was used for many well known results including those
mentioned in section \ref{existingwork}.

The variants of realizability used here have their roots in
\cite{kreiseltroelstra}, where Kriesel and Trolestra adapted Kleene's
realizability from \cite{kleene45} to work with second order
arithmetic. This was later adapted and used by Friedman in
\cite{friedman73}, by Myhill in \cite{Myhill73} and \cite{Myhill75}
and by Beeson in \cite{BeesonScedrov} and \cite{beeson85}. In
\cite{mccarty} and \cite{mccarty86}, McCarty adapted Beeson's
definition to work for set theories with extensionality.
The definition below is the variation introduced by Rathjen in
\cite{rathjen06}, where bounded quantifiers are kept separate in the
definition.

Note that while realizability models intuitionistic logic, it takes
place in a ``background universe'' that may include classical
logic. In \cite{mccarty}, McCarty carries out the realizability
construction within $\zf$, although he remarks that in fact $\izf$ is
sufficient. In this paper, we work within a background universe of
$\zfc$.

Following \cite{rathjen06}, we start by defining the class $V(\pcaa)$
using inductive definitions. Recall (for example from
\cite{aczelrathjen}, section 5) that an inductive definition is a
class, $\Phi$, of ordered pairs and we write $\frac{X}{a} \Phi$ to
mean $\langle X, a \rangle \in \Phi$. Then we have the following
theorem.

\begin{theorem}
  For any inductive definition, $\Phi$, there is a class $I(\Phi)$
  which is the smallest class, $Y$, such that whenever $X \subseteq Y$
  and $\frac{X}{a}\Phi$ we have $a \in Y$.
\end{theorem}

\begin{proof}
  This is theorem 5.1 in \cite{aczelrathjen}.
\end{proof}

We define $V(\pcaa)$ using the inductive definition
\[
\frac{X}{a} \Phi \text{ iff } a \subseteq |\pcaa| \times X
\]

That is, explicitly $V(\pcaa)$ is the smallest class $Y$ such that
\[
\powset{|\pcaa| \times Y} \subseteq Y
\]

We then introduce a relation $\Vdash$ between $\pcaa$ and formulae
with parameters over $V(\pcaa)$. The first two lines of the definition are
defined simultaneously by $\in$-induction and the remaining lines
allow one to inductively define realizability for any sentence,
$\phi$.

\begin{eqnarray*}
e \Vdash a \in b & \text{iff} & \text{there is } \langle  (e)_0, c \rangle \in
b \text{ such that }  (e)_1 \Vdash a = c \\
e \Vdash a = b & \text{iff} & \text{for every } \langle  f,\, c \rangle \in a, 
(e)_0 f \Vdash c \in b \text{ and } \\
& & \qquad \text{for every } \langle  f, c \rangle
\in b,\,  (e)_1 f  \Vdash c \in a \\
e \Vdash \phi \wedge \psi & \text{iff} & (e)_0 \Vdash \phi \text{ and }
(e)_1 \Vdash \psi \\
e \Vdash \phi \vee \psi & \text{iff} & \text{either } (e)_0 =
\underline{0} \text{ and }
(e)_1 \Vdash \phi, \text{ or } (e)_0 = \underline{1} \text{ and } (e)_1 \Vdash
\psi \\
e \Vdash \phi \rightarrow \psi & \text{iff} & f \Vdash \phi \text{
  implies } e f \Vdash \psi  \\
e \Vdash (\exists x \in a)\,\phi(x) & \text{iff} & \text{there is } \langle 
(e)_0, b \rangle \in a \text{ such that } (e)_1 \Vdash \phi(b) \\
e \Vdash (\forall x \in a)\,\phi(x) & \text{iff} & \text{for every } \langle 
f, b \rangle \in a,\, e f \Vdash \phi(b)  \\
e \Vdash (\exists x)\,\phi(x) & \text{iff} & \text{there is } a \in V(\pcaa)
\text{ such that } e \Vdash \phi(a) \\
e \Vdash (\forall x)\,\phi(x) & \text{iff} & \text{for every } a \in V(\pcaa),\,
e \Vdash \phi(a)
\\
e \Vdash \neg \phi & \text{iff} & f \Vdash \phi \text{ is false for
  every } f \in \pcaa
\end{eqnarray*}

If $\phi$ has free variables amongst $x_1, \ldots, x_n$, we write $e
\Vdash \phi$ to mean $e \Vdash (\forall x_1,\ldots, x_n)\,\phi$ (the
\emph{universal closure} of $\phi$).

We write $V(\pcaa) \models \phi$ to mean that there is $e \in \pcaa$
such that $e \Vdash \phi$.

This structure has been defined so that we get soundness for $\izf$ in
the following sense.

\begin{theorem}
Suppose that $\phi$ is a theorem of $\izf$. Then, $V(\pcaa) \models
\phi$.
\end{theorem}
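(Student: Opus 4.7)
The plan is to proceed by induction on the length of a derivation of $\phi$ in $\izf$, exhibiting, for each axiom or rule, a uniform procedure (that is, a closed term built using the combinatory completeness of $\pcaa$) that converts realizers of the premises into a realizer of the conclusion. We must therefore check three things: that every axiom of intuitionistic first-order logic with equality is realized, that every inference rule preserves realizability, and that every set-theoretic axiom of $\izf$ is realized by some element of $\pcaa$.

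For the logical fragment, the propositional and predicate axioms of intuitionistic logic are realized by standard combinators built using the $\lambda$-abstraction of the previous proposition (for instance, $(\lambda x,y)x$ realizes $\phi \rightarrow (\psi \rightarrow \phi)$), and modus ponens is realized by application. The only non-routine piece is the equality axioms: one must establish a ``substitutivity lemma'' asserting the existence of a closed term $\eqt$ such that whenever $e \Vdash a = b$ and $f \Vdash \phi(a)$, one has $\eqt \cdot e \cdot f \Vdash \phi(b)$. This is proved by induction on $\phi$, using the symmetric, reflexive and transitive structure of $\Vdash$ on atomic formulas; those metaproperties of $\Vdash$ are in turn established by a preliminary $\in$-induction over $V(\pcaa)$.

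The set-theoretic axioms are then handled one by one. Extensionality falls out of the defining clause for $\Vdash a = b$. Pairing, union, strong infinity and bounded separation admit direct realizers given by explicit constructions inside $V(\pcaa)$; for example, pairing is realized via $\{\langle \mathbf{0}, a\rangle, \langle \mathbf{1}, b\rangle\}$, and full separation via $\{\langle \langle f, e\rangle, c\rangle : \langle f, c\rangle \in a,\ e \Vdash \phi(c)\}$, which lives in $V(\pcaa)$ by full Separation in the metatheory. Power Set is treated by taking, for each $a \in V(\pcaa)$, the set of all suitable coded subobjects of $a$, using Power Set in the metatheory. For Collection, if $e \Vdash (\forall x \in a)(\exists y)\phi(x,y)$, then for each $\langle f, b\rangle \in a$ there exists some $c_{f,b} \in V(\pcaa)$ with $e \cdot f \Vdash \phi(b, c_{f,b})$; one invokes Collection in the metatheory to gather these into a single $V(\pcaa)$-set that serves as the witness.

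The hardest step will be $\in$-induction, which requires the recursion combinator $y'$ from the previous proposition. Given a realizer $e$ of the premise $(\forall x)((\forall y \in x)\phi(y) \rightarrow \phi(x))$, I would form a recursive term $g$ satisfying $g \cdot c \simeq e \cdot (\lambda f.\, g \cdot (c \cdot f))$ in the appropriate sense, and then prove by $\in$-induction in the metatheory that $g \Vdash \phi(a)$ for every $a \in V(\pcaa)$. Verifying that $g$ is total and that its recursive unfolding matches the realizability clauses for bounded universal quantification is the most delicate bookkeeping in the argument; once this is in place, everything else reduces to routine manipulation of closed terms over $\pcaa$.
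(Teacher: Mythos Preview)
The paper does not give its own proof of this statement: it is recorded in the introductory Section~\ref{vaint} as a known background result about the model $V(\pcaa)$, with the surrounding text pointing to McCarty~\cite{mccarty,mccarty86} and Rathjen~\cite{rathjen06} for the details. Your sketch is precisely the standard argument found in those references (induction on derivations, combinators for the logical axioms, a substitutivity lemma for equality proved by formula induction on top of an $\in$-induction for the atomic cases, explicit witnesses for the set-existence axioms, and the fixed-point combinator for $\in$-induction), so there is nothing to compare: you have reproduced the literature proof that the paper is citing. One small slip: you mention ``strong infinity'' and ``bounded separation'' in your list, but $\izf$ has ordinary Infinity and full Separation; you do in fact handle full Separation correctly a line later, so this is only a wording issue.
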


Recall from chapter 5 of \cite{mccarty} that $V(\pcaa)$ has certain
standard representations of the naturals and Baire space.

Define 
\begin{eqnarray*}
\overline{n} & = & \{ \langle \underline{m}, \overline{m} \rangle
\;|\; m < n \} \\
\overline{\omega} & = & \{ \langle \underline{n}, \overline{n} \rangle
\;|\; n \in \omega \}
\end{eqnarray*}
Then $V(\pcaa)$ has a realizer for the statement that
$\overline{\omega}$ is the set of natural numbers. We will also write
$\overline{\omega}$ as $\mathbb{N}$.

Suppose that $f \in \pcaa$ satisfies that for all $n \in \omega$,
there exists $m \in \omega$ such that $f \underline{n} =
\underline{m}$. Then write
\[
\overline{f} = \{ \langle \underline{n}, (\overline{n}, \overline{m})
\rangle \;|\; n, m \in \omega, f \underline{n} = \underline{m} \}
\]
(where we write $(,)$ for $V(\pcaa)$'s internal notion of ordered
pair).

There is a realizer in $V(\pcaa)$ for the statement that the set
of functions from $\omega$ to $\omega$ is precisely
\[
\mathbb{N}^\mathbb{N} := \{ \langle f, \overline{f} \rangle \;|\; (\forall n \in
\omega)(\exists m \in \omega)\,f \underline{n} = \underline{m} \}
\]

\section{Outline of the Proof}
We will show that wEP fails for $\czf$. We do this by first showing
that for any pca, $\pcaa$, we can construct three realizability
models, $V(\pcaa)$, $\vip{\pcaa}$, and $\vg{\pcaa}$. $V(\pcaa)$ is the
usual realizability model of $\izf$ from section
\ref{vaint}. $\vip{\pcaa}$ and $\vg{\pcaa}$ are based on functional sets and symmetric sets respectively and will be described
in detail below.

The heart of the proof is that $\vip{\pcaa}$ and $\vg{\pcaa}$ are
essentially different models and yet both can be embedded into
$V(\pcaa)$ in such a way that realizability is preserved by the
embedding.
$\vip{\pcaa}$ and $\vg{\pcaa}$ must both provide witnesses of
existential statements that are still valid in
$V(\pcaa)$. Definability will imply that these witnesses are
realizably equal in $V(\pcaa)$.

The final step of the proof is to construct a particular pca, $\term$,
based on term models and normal filter $\Gamma$ to show wEP
fails. The cause of this failure will be a simple instance of the
fullness axiom.

\section{The Model $\vg{\pcaa}$}
\subsection{Definitions}
A standard technique for showing the independence of choice principles
in classical set theories is by using \emph{symmetric models}. These
can be seen as boolean valued models where every element is
``symmetric.'' See, for example \cite{bvmodels},
\cite{jechst}, \cite{jechac}, or \cite{kunen} for a detailed
description.  We will construct a realizability model, $\vg{\pcaa}$
based on the same ideas.

We start by defining the model $V_1(\pcaa)$. As for $V(\pcaa)$, we
define this using inductive definitions. $V(\pcaa)$ is the smallest
class $X$ satisfying
\[
\powset{2 \times |\pcaa| \times X} \subseteq X
\]

One can think of $V_1(\pcaa)$ as things from $V(\pcaa)$ with an
extra label from $2$. Hence given any element of $V_1(\pcaa)$ we can
think of it as an element of $V(\pcaa)$ by ignoring this extra
label. Explicitly, we define this recursively as follows. Given $a \in
V_1(\pcaa)$,
\[
a^\circ := \{\langle e, b^\circ \rangle \;|\; \langle s, e, b \rangle
\in a \}
\]

We write $e \vdone \phi$ to mean that $e \Vdash \phi$ in $V(\pcaa)$ when
each parameter, $a$, in $\phi$ has been replaced by $a^\circ $.

\begin{definition}
Let $\pcaa$ be a pca. We say that $\alpha$ is an \emph{automorphism}
of $\pcaa$ if it is a bijection $\pcaa \rightarrow \pcaa$ that such
that both $\alpha$ and $\alpha^{-1}$ preserve application and fix
$\pcas$ and $\pcak$.
\end{definition}

Given an automorphism, $\alpha$, of $\pcaa$, we can lift this
inductively to $V_1(\pcaa)$ as follows:

\[
\alpha(a) = \{ \langle 0, \alpha(e), \alpha(b) \rangle \; | \;
\langle 0, e, b \rangle \in a \} \cup \{ \langle 1, \alpha(e),
\alpha(b) \rangle \; | \; \langle 1, e, b \rangle \in a \}
\]

So this is simply the natural action of the automorphism group on
$V_1(\pcaa)$.

We assume that the pairing and projection elements and numerals
that appear in the definition of realizability over $V(\pcaa)$ are
defined using $\pcas$ and $\pcak$ and therefore fixed by any
automorphism. Hence we get
\begin{proposition}
\label{automspresrealiz}
Suppose that $\alpha$ is an automorphism of $\pcaa$ and 
\[
e \Vdash \phi
\]
Then if $\phi^\alpha$ is the result of replacing any parameters $c$ in
$\phi$ by $\alpha(c)$, we have
\[
\alpha(e) \Vdash \phi^\alpha
\]
\end{proposition}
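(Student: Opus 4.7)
The plan is to proceed by induction on the structure of $\phi$, with a simultaneous $\in$-induction on parameters for the two atomic cases. First, I would note that the definition of $\alpha$ on $V_1(\pcaa)$ translates immediately to an action on $V(\pcaa)$ itself: given $a \in V(\pcaa)$, set $\alpha(a) = \{\langle \alpha(e), \alpha(b)\rangle \mid \langle e, b\rangle \in a\}$. Because $\alpha$ and $\alpha^{-1}$ both preserve application and fix $\pcas,\pcak$, they fix the derived pairing/projection constants $\mathbf{p},\mathbf{p}_0,\mathbf{p}_1$, the numerals $\underline{n}$, and in particular $\mathbf{0}$ and $\mathbf{1}$. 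Moreover $\alpha(e\cdot f)\simeq\alpha(e)\cdot\alpha(f)$ in the Kleene sense, and $\alpha(t)\downarrow$ iff $t\downarrow$ for any closed term $t$.

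For the atomic cases, I would prove by $\in$-induction on the pair $(a,b)$ that $e\Vdash a\in b$ iff $\alpha(e)\Vdash \alpha(a)\in\alpha(b)$, and $e\Vdash a=b$ iff $\alpha(e)\Vdash \alpha(a)=\alpha(b)$. Unfolding the definitions, $e\Vdash a\in b$ asks for $\langle (e)_0,c\rangle\in b$ with $(e)_1\Vdash a=c$. Applying $\alpha$ to the membership datum and using that $\alpha$ commutes with $\mathbf{p}_0,\mathbf{p}_1$ (so $(\alpha(e))_i=\alpha((e)_i)$), the inductive hypothesis on $a=c$ (which is at lower rank) yields $(\alpha(e))_1\Vdash \alpha(a)=\alpha(c)$ with $\langle (\alpha(e))_0,\alpha(c)\rangle\in\alpha(b)$. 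The equality clause is entirely symmetric. Reversing the direction uses the same argument applied to $\alpha^{-1}$, which is also an automorphism.

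For the induction step on connectives and quantifiers, each clause unfolds into a condition that is built by application of constants fixed by $\alpha$, universal/existential quantification over $\pcaa$ or $V(\pcaa)$, and the realizability predicate at strictly simpler formulas. For $\wedge$ and $\vee$, I invoke that $(\alpha(e))_i=\alpha((e)_i)$ and that $\alpha$ fixes $\mathbf{0},\mathbf{1}$. For $\rightarrow$, given $g\Vdash \phi^\alpha$, apply the inductive hypothesis to $\alpha^{-1}$ to get $\alpha^{-1}(g)\Vdash \phi$; then $e\cdot\alpha^{-1}(g)\Vdash\psi$, and the IH with $\alpha$ gives $\alpha(e)\cdot g\Vdash \psi^\alpha$. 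Bounded quantifiers mimic the atomic clauses using the IH, while for unbounded $\exists,\forall$ the witness set $V(\pcaa)$ is closed under $\alpha$ and $\alpha^{-1}$, so the witness $a$ and $\alpha(a)$ are in bijective correspondence. The $\neg$ case is an instance of $\rightarrow$.

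The only subtle point, and the one on which I would spend most care, is the simultaneous atomic induction: as with Rathjen's and McCarty's formulations, $\Vdash$ at $\in$ and $=$ is defined by a single $\in$-recursion, so both bi-implications must be verified together at each rank. Beyond that, every step is routine manipulation made possible by the two facts that the $\pcas,\pcak$-generated constants are $\alpha$-fixed and that $\alpha$ is an application-preserving bijection, which makes the proof essentially a ``naturality'' statement for realizability under the automorphism action on $V(\pcaa)$.
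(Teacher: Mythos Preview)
Your proposal is correct and is exactly the argument the paper has in mind; the paper does not give a proof at all, but simply remarks that the pairing, projection, and numeral constants are built from $\pcas$ and $\pcak$ and hence fixed by any automorphism, and then states the proposition. Your induction on the structure of $\phi$, with the simultaneous $\in$-induction for the atomic clauses, is precisely the routine verification the paper leaves implicit.
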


Recall that normal filters are defined as follows.
\begin{definition}
Let $G$ be a group. Then a set of subgroups, $\Gamma$, is a
\emph{normal filter} on $G$ if

\begin{enumerate}
\item $G \in \Gamma$
\item $H \in \Gamma$ and $H$ is a subgroup of $H'$ implies that $H'
  \in \Gamma$
\item $H, H' \in \Gamma$ implies that $H \cap H' \in \Gamma$
\item $H \in \Gamma$ and $g \in G$ implies $g H g^{-1} \in \Gamma$
\end{enumerate}
\end{definition}

Recall that stabilisers are defined as follows.
\begin{definition}
If $G$ is a group of automorphisms of $\pcaa$ and $a \in V_1(\pcaa)$,
we define the \emph{stabiliser} of $a$ in $G$ as
\[
\stab{G}{a} := \{ \alpha \in G \;|\; \alpha(a) = a \}
\]
\end{definition}

\begin{definition}
Given a group $G$ of automorphisms of $\pcaa$ and $\Gamma$, a normal
filter on $G$, we define the class $\vg{\pcaa}
\subseteq V_1(\pcaa)$, of \emph{partly symmetric} sets inductively as
follows.

$\vg{\pcaa}$ is the smallest class $X$ such that
\[
\{ a \in V_1(\pcaa) \;|\; \stab{G}{a} \in \Gamma \text{ and }
(\forall \langle 0, e, b \rangle \in a)\, b \in X\} \subseteq X
\]
\end{definition}
In other words, $a \in V_1(\pcaa)$ is partly symmetric if it has a
``large'' stabiliser and every element that has been labelled with a
$0$ is also partly symmetric.  Note that this property is preserved by
automorphisms, and one can easily show the following.
\begin{proposition}
\label{sympreserved}
If $a \in V_1(\pcaa)$ and $\alpha$ is an automorphism of $\pcaa$ such
that $\alpha(a) = a$, then $\alpha(a^\circ ) = a^\circ$.
\end{proposition}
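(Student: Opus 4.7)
The plan is to prove the slightly stronger intertwining identity $\alpha(a)^\circ = \alpha(a^\circ)$ for every $a \in V_1(\pcaa)$ by $\in$-induction, where $\alpha$ acts on $V(\pcaa)$ by the analogous recursive formula $\alpha(x) = \{\langle \alpha(e), \alpha(b)\rangle \mid \langle e, b\rangle \in x\}$. Once this is established, the proposition follows immediately: if $\alpha(a)=a$, then applying $\circ$ to both sides and using the identity yields $\alpha(a^\circ) = \alpha(a)^\circ = a^\circ$.

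For the induction step, fix $a \in V_1(\pcaa)$ and assume the identity holds for every $b$ occurring in some triple $\langle s, e, b\rangle \in a$. Unfolding the definition of $\alpha$ on $V_1(\pcaa)$ gives
\[
\alpha(a) = \{\langle s, \alpha(e), \alpha(b)\rangle \mid \langle s, e, b\rangle \in a\},
\]
so by the definition of $(-)^\circ$,
\[
\alpha(a)^\circ = \{\langle \alpha(e), \alpha(b)^\circ\rangle \mid \langle s, e, b\rangle \in a\}.
\]
By the induction hypothesis $\alpha(b)^\circ = \alpha(b^\circ)$, hence
\[
\alpha(a)^\circ = \{\langle \alpha(e), \alpha(b^\circ)\rangle \mid \langle s, e, b\rangle \in a\} = \alpha\bigl(\{\langle e, b^\circ\rangle \mid \langle s, e, b\rangle \in a\}\bigr) = \alpha(a^\circ),
\]
which closes the induction.

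There is really no significant obstacle here; the content of the proposition is just that the forgetful map $(-)^\circ \colon V_1(\pcaa) \to V(\pcaa)$ is equivariant for the induced $G$-actions, and equivariance follows by a straightforward recursion once one notes that the $\{0,1\}$-label is fixed by $\alpha$ and that $\alpha$ commutes with pairing on both sides. The only mild subtlety is being explicit about what the action of $\alpha$ on $V(\pcaa)$ is, but the natural definition is forced by the requirement that the $\circ$ map intertwine the two actions.
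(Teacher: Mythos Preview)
Your proof is correct and is precisely the natural elaboration of what the paper leaves implicit (the paper merely says ``one can easily show the following'' and gives no argument). Proving the equivariance identity $\alpha(a)^\circ = \alpha(a^\circ)$ by $\in$-induction and then specializing to $\alpha(a)=a$ is exactly the intended route, and your observation that one must first fix the induced $G$-action on $V(\pcaa)$ is a point the paper glosses over.
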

In particular if we take an element of $\vg{\pcaa}$, then it still
has $\stab{G}{a} \in \Gamma$ when we consider it as an element of
$V(\pcaa)$.

We can now define realizability on $\vg{\pcaa}$ as follows

\begin{eqnarray*}
e \vdzero a \in b & \text{iff} & \text{there is } \langle 0, (e)_0, c \rangle \in
b \text{ such that } (e)_1 \vdzero a = c \\
e \vdzero a = b & \text{iff} & \text{for every } \langle 0, f, c \rangle \in a,\,
(e)_0 f \vdzero c \in b \text{ and} \\
& & \qquad \text{for every } \langle 0, f, c \rangle
\in b,\, (e)_1 f  \vdzero c \in a, \text{ and } e \vdone a = b\\
e \vdzero \phi \wedge \psi & \text{iff} & (e)_0 \vdzero \phi \text{
  and }
(e)_1 \vdzero \psi \\
e \vdzero \phi \vee \psi & \text{iff} & \text{either } (e)_0 =
\underline{0} \text{ and }
(e)_1 \vdzero \phi, \text{ or } (e)_0 = \underline{1} \text{ and } (e)_1 \vdzero
\psi \\
e \vdzero \phi \rightarrow \psi & \text{iff} & f \vdzero \phi \text{
  implies } e f \vdzero \psi, \text{ and } e \vdone \phi \rightarrow
\psi \\
e \vdzero (\exists x \in a)\,\phi(x) & \text{iff} & \text{there is } \langle 0,
(e)_0, b \rangle \in a \text{ such that } (e)_1 \vdzero \phi(b) \\
e \vdzero (\forall x \in a)\,\phi(x) & \text{iff} & \text{for every } \langle 0,
f, b \rangle \in a,\, e f \vdzero \phi(b), \text{ and } e \vdone (\forall x
\in a)\,\phi(x) \\
e \vdzero (\exists x)\,\phi(x) & \text{iff} & \text{there is } a \in \vg{\pcaa}
\text{ such that } e \vdzero
\phi(a) \\
e \vdzero (\forall x)\,\phi(x) & \text{iff} & \text{for every } a \in \vg{\pcaa},\,
e \vdzero \phi(a), \text{ and } e \vdone (\forall x)\,\phi(x)
\\
e \vdzero \neg \phi & \text{iff} & f \vdone \phi \text{ is false for
  every } f \in \pcaa
\end{eqnarray*}

We write $\vg{\pcaa} \models \phi$ to mean that there is some $e \in
\pcaa$ such that $e \vdzero \phi$.

We clearly have the following proposition.

\begin{proposition}
Suppose that $\alpha$ is an automorphism and
$$
e \vdzero \phi
$$
Then, writing $\phi^\alpha$ for the formula obtained by replacing any
parameters, $a$, in $\phi$ by $\alpha(a)$,
$$
\alpha(e) \vdzero \phi^\alpha
$$
\end{proposition}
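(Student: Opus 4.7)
The plan is to proceed by induction on the complexity of $\phi$, with the two atomic cases $a \in b$ and $a = b$ handled by a simultaneous $\in$-induction on the parameters (mirroring the definition). Throughout, I rely on two general facts: (i) the pairing/projection combinators $\mathbf{p}, \mathbf{p}_0, \mathbf{p}_1$, the numerals $\mathbf{0}, \mathbf{1}, \underline{n}$, and all recursive combinators that appear in the clauses are built from $\pcas$ and $\pcak$, hence fixed by $\alpha$; and (ii) $\alpha$ preserves application, so $\alpha(e \cdot f) \simeq \alpha(e) \cdot \alpha(f)$ and therefore $(\alpha(e))_i = \alpha((e)_i)$. I also need the observation that $\alpha$ acts on $V_1(\pcaa)$ as described before the statement, and that this action commutes with the forgetful map $a \mapsto a^\circ$, i.e. $\alpha(a)^\circ = \alpha(a^\circ)$, which is immediate from the two definitions.

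For the atomic step, suppose inductively that the claim holds for all parameters lower in the $\in$-rank. If $e \vdzero a = b$, then for every $\langle 0, f, c \rangle \in a$ we have $(e)_0 f \vdzero c \in b$; applying $\alpha$ and the inductive hypothesis yields $(\alpha(e))_0 \alpha(f) \vdzero \alpha(c) \in \alpha(b)$, and by the definition of the action this covers every element $\langle 0, f', c' \rangle \in \alpha(a)$. The symmetric clause for $\alpha(b)$ is analogous, and the $\vdone a = b$ side-condition is handled by applying the already-stated automorphism proposition for $V(\pcaa)$ to $a^\circ$ and $b^\circ$, then using $\alpha(a)^\circ = \alpha(a^\circ)$. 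The $\in$-case reduces to the equality case by the definitional clause.

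For the connectives and bounded quantifiers the argument is routine, using (i) and (ii) to rewrite each realizability clause after applying $\alpha$; wherever the clause carries a $\vdone$ side-condition (namely the cases for $=$, $\rightarrow$, $\forall x \in a$, $\forall x$, and $\neg$), we discharge it by the analogous proposition for $V(\pcaa)$. For the unbounded quantifiers $\exists x$ and $\forall x$ we additionally need to know that $\alpha$ preserves the class $\vg{\pcaa}$: if $a \in \vg{\pcaa}$ with $\stab{G}{a} = H \in \Gamma$, then $\stab{G}{\alpha(a)} = \alpha H \alpha^{-1} \in \Gamma$ by normality of $\Gamma$, and a further $\in$-induction shows all $0$-labelled components of $\alpha(a)$ remain in $\vg{\pcaa}$; hence $\alpha$ restricts to a bijection $\vg{\pcaa} \to \vg{\pcaa}$.

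The only real obstacle is bookkeeping: each $\vdzero$ clause carries an auxiliary $\vdone$ obligation, and one must keep straight that applying $\alpha$ to a $V_1(\pcaa)$-name and then passing to $V(\pcaa)$ via $(-)^\circ$ gives the same result as passing to $V(\pcaa)$ first and applying the already-established $V(\pcaa)$-automorphism proposition. Once the identity $\alpha(a)^\circ = \alpha(a^\circ)$ is in hand, the two tracks run in parallel with no further friction.
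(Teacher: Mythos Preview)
Your argument is correct and is precisely the routine induction the paper has in mind; the paper itself gives no proof, introducing the proposition with ``We clearly have the following proposition'' and leaving the verification to the reader. The ingredients you isolate---that $\alpha$ fixes all combinators built from $\pcas,\pcak$ and commutes with application, that $\alpha(a)^\circ=\alpha(a^\circ)$ so the $\vdone$ side-conditions reduce to the earlier $V(\pcaa)$ proposition, and that normality of $\Gamma$ makes $\alpha$ a bijection on $\vg{\pcaa}$---are exactly what is needed, and the paper has already noted the last of these just before Proposition~\ref{sympreserved}.
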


The definition above can be seen as a combination of realizability and
Kripke models of intuitionistic logic. See for example
\cite{troelstravandalen} for a description of Kripke models. Like in
\cite{FriedmanScedrov85}, the poset used in this model would have just
two elements.  On this basis, one should not be surprised by the
following proposition.

\begin{proposition}
\label{realpreserve}
Suppose that $e \vdzero \phi$. Then also $e \vdone \phi$.
\end{proposition}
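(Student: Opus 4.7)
The plan is to prove this by induction on the logical complexity of $\phi$, with the atomic cases handled by a simultaneous $\in$-induction on the pair of parameters. The whole point of the definition of $\vdzero$ is that the statement has been prearranged to hold: every clause either (a) has $\vdone$ inserted explicitly as a conjunct, or (b) is a positive existential/conjunctive clause whose $\vdone$-analogue will follow trivially from the inductive hypothesis. So the proof should be almost entirely bookkeeping, provided the two inductions are set up to mirror the inductive structure of $\vdzero$ itself.

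First I would dispose of the atomic cases, simultaneously for $\in$ and $=$, by $\in$-induction on the pair $(a,b)$, mirroring how $\vdzero$ is defined on these formulas. If $e \vdzero a = b$, the definition literally includes $e \vdone a = b$ as a conjunct, so there is nothing to do. If $e \vdzero a \in b$, then some $\langle 0, (e)_0, c\rangle \in b$ satisfies $(e)_1 \vdzero a = c$; the inductive hypothesis on the equality gives $(e)_1 \vdone a = c$, and by the definition of $(-)^\circ$ we have $\langle (e)_0, c^\circ\rangle \in b^\circ$, yielding $e \vdone a \in b$.

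For the propositional and quantifier cases I would induct on the formula. The positively structured clauses ($\wedge$, $\vee$, bounded $\exists$, unrestricted $\exists$) follow immediately from the inductive hypothesis; the one worth flagging is unrestricted $\exists$, where a witness $a \in \vg{\pcaa}$ with $e \vdzero \phi(a)$ produces, via the IH, $e \vdone \phi(a)$, and since $a^\circ \in V(\pcaa)$ this is exactly the data needed for $e \vdone (\exists x)\phi(x)$. The remaining clauses ($\rightarrow$, bounded $\forall$, unrestricted $\forall$, $\neg$) all have the required $\vdone$-statement built in as one of the defining conjuncts, so nothing needs to be shown.

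The main obstacle, such as it is, is conceptual rather than technical: one must recognise that the asymmetry between $\vdzero$ and $\vdone$---namely that the universal clauses inside $\vdzero a = b$ range only over label-$0$ triples, while $\vdone a = b$ must inspect every pair $\langle f, c^\circ\rangle$ in $a^\circ$ and $b^\circ$, including those coming from label-$1$ triples---is exactly what forces the explicit $\vdone$-conjunct into the definition of $\vdzero a = b$ and of the other universally quantified clauses. Once this design principle is seen the proof is essentially automatic.
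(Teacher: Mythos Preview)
Your proposal is correct and follows exactly the approach in the paper: induction on formulae, observing that the clauses for $=$, $\rightarrow$, bounded and unbounded $\forall$, and $\neg$ carry the $\vdone$-statement as an explicit conjunct, while the remaining positive clauses ($\in$, $\wedge$, $\vee$, bounded and unbounded $\exists$) go through by the inductive hypothesis. One small simplification: the separate $\in$-induction for the atomic cases is not actually needed, since the $\vdone$-conjunct in the definition of $e \vdzero a = c$ already gives $(e)_1 \vdone a = c$ outright, from which $e \vdone a \in b$ follows immediately.
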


\begin{proof}
We show this by induction on formulae, $\phi$. One can see that the
definition of $\vdzero$ has been carefully chosen so that we can
perform the induction at $=$, universal quantifiers, implication, and
negation. One can check that the induction holds at conjunction,
disjunction, $\in$, and existential quantifiers.
\end{proof}

\subsection{Soundness Theorems}
We now need to show soundness for intuitionistic logic and the axioms of
$\czf$.

Throughout the soundness theorems, the following proposition is useful.

\begin{proposition}
\label{multimpl}
  \begin{enumerate}
  \item To show $e \vdzero (\forall x_1)\ldots(\forall
    x_n)\,\phi(x_1,\ldots,x_n)$ it is sufficient to show that for all
    $a_1, \ldots, a_n \in \vg{\pcaa}$,
    \[ e \vdzero \phi(a_1,\ldots,a_n) \] and for all $a_1,\ldots,a_n
    \in V_1(\pcaa)$,
    \[ e \vdone \phi(a_1,\ldots,a_n) \]
  \item To show $e \vdzero \phi_1 \rightarrow (\phi_2 \rightarrow
    (\ldots \rightarrow (\phi_n \rightarrow \psi)\ldots)$, it is
    sufficient to show that for any $e_1,\ldots,e_{n - 1} \in \pcaa$,
    $e e_1 \ldots e_{n - 1} \downarrow$ and that whenever $e_i \vdzero
    \phi_i$ for each $i = 1,\ldots,n$ we have
    \[ e e_1 \ldots e_n \vdzero \psi \]
    and whenever $e_i \vdone \phi_i$ for each $i = 1,\ldots,n$ we have
    \[ e e_1 \ldots e_n \vdone \psi \]
  \end{enumerate}
\end{proposition}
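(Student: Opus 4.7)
The plan is to prove both parts by straightforward induction on $n$, unfolding the clauses for universal quantifiers and implication in the definitions of $\vdzero$ and $\vdone$. The content of the proposition is essentially bookkeeping that reconciles the $\vdzero$-side (which quantifies over $\vg{\pcaa}$ and over the pca as application) with the $\vdone$-side (which quantifies over $V(\pcaa)$ via the $\circ$-reduct).

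For part~(1), I would induct on $n$. The base case $n=1$ is immediate from the clause for $e \vdzero (\forall x)\phi(x)$: the first conjunct asks for $e \vdzero \phi(a)$ with $a \in \vg{\pcaa}$, the second asks for $e \vdone \phi(a)$ with $a \in V_1(\pcaa)$, and these are precisely the hypotheses. For the inductive step I unfold the outermost $\forall x_1$. The $\vdzero$ sub-clause, with a fixed $a_1 \in \vg{\pcaa}$, reduces to the inductive hypothesis applied to the formula $\phi(a_1, x_2, \ldots, x_n)$. The $\vdone$ sub-clause unfolds, via the corresponding clause of $\Vdash$ on $V(\pcaa)$, to the statement that $e \Vdash \phi(a_1^\circ, c_2, \ldots, c_n)$ for all $c_2, \ldots, c_n \in V(\pcaa)$.

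The one place that needs a moment's thought is this last reduction: the hypothesis only supplies $\vdone$-realizers for tuples from $V_1(\pcaa)$, which after the $\circ$-operation gives us only the $c_i$ of the form $a_i^\circ$. The key auxiliary fact I would therefore record first is that the map $a \mapsto a^\circ : V_1(\pcaa) \to V(\pcaa)$ is surjective. This is a routine $\in$-recursion using Collection: given $c \in V(\pcaa)$, pick for each $\langle e, b \rangle \in c$ a lift $\widetilde{b} \in V_1(\pcaa)$ with $\widetilde{b}^\circ = b$ and set $\widetilde{c} = \{\langle 0, e, \widetilde{b}\rangle \mid \langle e, b \rangle \in c\}$, giving $\widetilde{c}^\circ = c$. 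With surjectivity in hand, the $\vdone$-clause of the inductive step follows immediately from the hypothesis.

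For part~(2), I would again induct on $n$. The base case $n=1$ is exactly the definition of $e \vdzero \phi_1 \to \psi$, with the convergence condition vacuous since no applications of $e$ are required. For the inductive step I unfold the outermost implication: I must show that $f \vdzero \phi_1$ implies $e.f \vdzero \phi_2 \to \cdots \to (\phi_n \to \psi)$ (and the analogue for $\vdone$), and that $e \vdone \phi_1 \to \cdots \to (\phi_n \to \psi)$. Each of these reduces by the inductive hypothesis applied to $e.f$ (or to $e$ itself, for the outermost $\vdone$-implication) to the stated hypotheses after absorbing one extra application of $e$; the bundled unconditional totality $e e_1 \ldots e_{n-1} \downarrow$ is exactly what guarantees that $e.f$ is defined and can be fed into the inductive hypothesis. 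I do not anticipate any genuine obstacle beyond careful bookkeeping of realizers, parameters, and convergence; the mildly fiddly point is keeping the two quantifier domains ($\vg{\pcaa}$ versus $V_1(\pcaa)$, and similarly $\vdzero$-realizers versus $\vdone$-realizers) aligned throughout.
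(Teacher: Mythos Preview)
Your proposal is correct and follows exactly the approach the paper indicates, namely induction on $n$; the paper's own proof consists of the single sentence ``Both parts can be proved by induction on $n$.'' Your observation that surjectivity of $a \mapsto a^\circ$ is needed in the inductive step of part~(1) is a genuine detail the paper leaves implicit, and your recursive construction of a lift is the natural way to supply it; the only small omission in part~(2) is that passing from $f \vdzero \phi_1$ to $f \vdone \phi_1$ (needed when invoking the $\vdone$-half of the inductive hypothesis for $e.f$) uses Proposition~\ref{realpreserve}.
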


\begin{proof}
Both parts can be proved by induction on $n$.
\end{proof}

\subsection{First Order Logic}

\begin{proposition}
$\vg{\pcaa}$ satisfies soundness for first order logic.
\end{proposition}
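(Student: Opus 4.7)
The plan is to proceed by induction on derivations in intuitionistic first order logic with equality, exhibiting for each axiom schema and each inference rule an explicit closed $\pcaa$-term that realizes the corresponding statement in the sense of $\vdzero$. By Proposition \ref{multimpl}, verifying an implicational axiom reduces to providing a combinator that transforms $\vdzero$-realizers of the antecedents into a $\vdzero$-realizer of the consequent, and simultaneously transforms $\vdone$-realizers into a $\vdone$-realizer. I would adopt this two-level reformulation throughout so that the $\vdzero$- and $\vdone$-obligations are dispatched in parallel at every step.

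The central observation that cuts the work in half is that, modulo the forgetful map $a \mapsto a^\circ$, the relation $\vdone$ on $V_1(\pcaa)$ is essentially the ordinary realizability relation on $V(\pcaa)$. Since soundness for $V(\pcaa)$ (indeed, for all of $\izf$) has already been recorded, the $\vdone$-component of each axiom can be inherited directly from that theorem using literally the same combinators. The remaining task is to verify the $\vdzero$-component, for which I would use the standard realizers from Rathjen's and McCarty's treatments: the identity combinator for $\forall$-elimination and $\exists$-introduction (since instances and witnesses are supplied externally by the derivation and a fortiori lie in $\vg{\pcaa}$), the standard $\pcas$-$\pcak$ terms for the propositional axioms, and pairing and projection for $\wedge$. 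No closure property of $\vg{\pcaa}$ beyond its inclusion in $V_1(\pcaa)$ is invoked, because soundness of first order logic does not require constructing any new sets.

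The only genuinely non-routine points are the connectives whose $\vdzero$-clause contains an explicit $\vdone$-conjunct, namely $=$, $\rightarrow$, $\forall$ and $\neg$. For the equality axioms (reflexivity, symmetry, transitivity, and substitutivity) I would first define the realizer for the $\vdzero$-part by induction on the $\in$-structure of $V_1(\pcaa)$, and then observe that the very same term already realizes the corresponding $\vdone$-statement either by a direct check or by invoking soundness over $V(\pcaa)$. For implication-introduction and universal-introduction the combinator has to be packaged so that the transformation it effects works at both levels; this is straightforward because the combinators are built uniformly from $\pcas$ and $\pcak$ and do not depend on the parameters being manipulated.

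The main obstacle, and essentially the only one, is therefore the bookkeeping imposed by the mixed definition of $\vdzero$: at each inductive step one must track that a single term $e$ simultaneously witnesses both the $\vdzero$-clause and the accompanying $\vdone$-clause, and that these two verifications interlock correctly under implication and universal quantification. Once this pattern is carried through for $=$, $\rightarrow$, $\forall$ and $\neg$, the remaining cases reduce to standard realizability calculations, and no new combinatorial or model-theoretic ideas are needed beyond those already present in the soundness proof for $V(\pcaa)$.
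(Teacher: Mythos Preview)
Your proposal is correct and follows essentially the same route as the paper: invoke Proposition~\ref{multimpl} to reduce each axiom and rule to a parallel $\vdzero$/$\vdone$ verification, inherit the $\vdone$ side from the known soundness of $V(\pcaa)$, and supply the standard McCarty/Rathjen combinators for the $\vdzero$ side, with special care at $=$, $\rightarrow$, $\forall$, and $\neg$ where the two levels interlock. The paper illustrates this with axioms 2 and 11 and writes out the three inference rules, then treats the equality axioms in a separate proposition via exactly the $\in$-inductive argument you describe; your identification of the ``mixed'' connectives as the only non-routine points matches the paper's emphasis.
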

Explicitly this means that for every axiom $\phi$ of the intuitionistic
predicate calculus, $\vga \models \phi$, and for every inference rule,
$\frac{\phi_1, \ldots, \phi_n}{\psi}$, if $\vga \models \phi_i$ for $i =
1,\ldots,n$, then $\vga \models \psi$.

The reader may wish to compare the following with the
proofs for soundness of intuitionistic logic in realizability and
Kripke models in, for example, \cite{troelstravandalen}.

\subsubsection{Axioms}
The axioms of intuitionistic predicate calculus are as follows.

\begin{enumerate}
\item $\phi \rightarrow (\psi \rightarrow \phi)$
\item $(\phi \rightarrow (\psi \rightarrow \chi)) \rightarrow
((\phi \rightarrow \psi) \rightarrow (\phi \rightarrow \chi))$
\item $\phi \rightarrow (\psi \rightarrow \phi \wedge \psi)$
\item $\phi \wedge \psi \rightarrow \phi$
\item $\phi \wedge \psi \rightarrow \psi$
\item $\phi \rightarrow \phi \vee \psi$
\item $\psi \rightarrow \phi \vee \psi$
\item $(\phi \vee \psi) \rightarrow ((\phi \rightarrow \chi)
  \rightarrow ((\psi \rightarrow \chi) \rightarrow \chi))$
\item $(\phi \rightarrow \psi) \rightarrow ((\phi \rightarrow \neg \psi) 
\rightarrow \neg \phi)$
\item $\phi \rightarrow (\neg \phi \rightarrow \psi)$
\item $(\forall x)\phi(x)\; \rightarrow\; \phi(y)$, where $y$ is free for
  $x$ in $\phi(x)$
\item $\phi(y) \rightarrow (\exists x)\phi(x)$, where $y$ is free for $x$ in
  $\phi(x)$
\end{enumerate}

As an example we will prove 2 and 11. These demonstrate the main ideas
that are used for the remaining axioms.



\paragraph{2}
We claim that 
\[
\pcas \vdzero (\phi \rightarrow (\psi \rightarrow \chi)) \rightarrow
((\phi \rightarrow \psi) \rightarrow (\phi \rightarrow \chi))
\]
By proposition \ref{multimpl}, it is enough to show that whenever 
  $e  \vdzero  \phi \rightarrow (\psi \rightarrow \chi)$
  $f  \vdzero  \phi \rightarrow \psi$ and
  $g  \vdzero  \phi$
we have 
\[
\pcas e f g \vdzero \chi
\]
and whenever
  $e  \vdone  \phi \rightarrow (\psi \rightarrow \chi)$
  $f  \vdone  \phi \rightarrow \psi$ and
  $g  \vdone  \phi$
we have 
\[
\pcas e f g \vdone \chi
\]
However, $\pcas e f g = e g (f g)$, so one can easily check that this
is the case. (We also have that by definition $\pcas e f \downarrow$
for all $e$ and $f$.)






\paragraph{11}
Let $I := \pcas \pcak \pcak$ be the identity. Then we claim
\[
I \vdzero (\forall x)\phi(x) \;\rightarrow\; \phi(y)
\]
What we actually mean is the universal closure of
this axiom. Without loss of generality we can assume the universal
closure is (ignoring any additional parameters) the following:
\[
I \vdzero (\forall y)((\forall x)\phi(x) \;\rightarrow\; \phi(y))
\]
Expanding this out, this means that for $b \in \vg{\pcaa}$,
\[
I \vdzero (\forall x)\phi(x) \;\rightarrow\; \phi(b)
\]
and for $b \in V_1(\pcaa)$
\[
I \vdone (\forall x)\phi(x) \;\rightarrow\; \phi(b)
\]
So suppose that $e \vdzero (\forall x)\phi(x)$ and $b \in
\vg{\pcaa}$. Then in particular,
\[
e \vdzero \phi(b)
\]
If $e \vdone (\forall x)\phi(x)$, then
\[
e \vdone \phi(b)
\]
So we have shown
\[
I \vdzero (\forall x)\phi(x) \;\rightarrow\; \phi(b)
\]
However, we can similarly show that for $b \in V_1(\pcaa)$
\[
I \vdone (\forall x)\phi(x) \;\rightarrow\; \phi(b)
\]
So we can deduce 
\[
I \vdzero (\forall y)((\forall x)\phi(x) \;\rightarrow\; \phi(y))
\]
as required.


\subsubsection{Inference Rules}
The inference rules of IPL are
\begin{enumerate}
\item $\frac{\phi, \phi \rightarrow \psi}{\psi}$
\item $\frac{\psi \rightarrow \phi(x)}{\psi \rightarrow (\forall
  x)\phi(x)}$ where $x \notin FV(\psi)$
\item $\frac{\phi(x) \rightarrow \psi}{(\exists x)\phi(x) \rightarrow
  \psi}$ where $x \notin FV(\psi)$
\end{enumerate}

\paragraph{1 (Modus Ponens)}
Note first that we can assume that
\begin{eqnarray*}
e & \vdzero & (\forall x_1,\ldots,x_n)\,\phi(x_1,\ldots,x_n) \\
f & \vdzero & (\forall x_1,\ldots,x_n)\;\phi(x_1,\ldots,x_n) \rightarrow
\psi(x_1,\ldots,x_n) \\
\end{eqnarray*}
where the free variables for $\phi$ and $\psi$ are amongst $x_1,
\ldots, x_n$.

Then for any $a_1, \ldots, a_n \in \vg{\pcaa}$, 
\begin{eqnarray*}
e & \vdzero & \phi(a_1,\ldots,a_n) \\
f & \vdzero & \phi(a_1,\ldots,a_n) \rightarrow
\psi(a_1,\ldots,a_n) \\
\end{eqnarray*}
and hence
$
e f \vdzero \psi(a_1,\ldots,a_n)
$.
Similarly, for any $a_1, \ldots, a_n \in V_1(\pcaa)$,
$
e f \vdone \psi(a_1,\ldots,a_n)
$.
So we have shown
\[
e f \vdzero (\forall x_1,\ldots,x_n)\,\psi(x_1,\ldots,x_n)
\]

\paragraph{2}
We show that if 
$
e \vdzero \psi \rightarrow \phi(x)
$,
then
\[
e \vdzero \psi \rightarrow (\forall x)\,\phi(x)
\]
So suppose that
$
e \vdzero \psi \rightarrow \phi(x)
$.
Then, more explicitly (ignoring any additional free variables) this is
\[
e \vdzero (\forall x)\,(\psi \rightarrow \phi(x))
\]
In particular, if $a \in \vg{\pcaa}$, then
\[
e \vdzero \psi \rightarrow \phi(a)
\]
and if $a \in V(\pcaa)$ then
\[
e \vdone \psi \rightarrow \phi(a)
\]
Now suppose that 
$
f \vdzero \psi
$.
We need to show that for any $a \in \vg{\pcaa}$,
$
e f \vdzero \phi(a)
$
and for any $a \in V_1(\pcaa)$,
$
e f \vdone \phi(a)
$.
But this is clear from the above, so we can deduce
\[
e f \vdzero (\forall x)\,\phi(x)
\]
We can similarly show that if $f \vdone \psi$, then
\[
e f \vdone (\forall x)\,\phi(x)
\]
and so
\[
e \vdzero \psi \rightarrow (\forall x)\,\phi(x)
\]

\paragraph{3}
We claim that if
$
e \vdzero \phi(x) \rightarrow \psi
$,
then
$
e \vdzero (\exists x)\phi(x) \rightarrow \psi
$.
First note as before, that what we actually assume is that 
\[
e \vdzero (\forall x)\,(\phi(x) \rightarrow \psi)
\]
Now suppose that
$
f \vdzero (\exists x)\,\phi(x)
$.
Then there is $a \in \vg{\pcaa}$ such that
$
f \vdzero \phi(a)
$.
But we know from the above that
\[
e \vdzero \phi(a) \rightarrow \psi
\]
And so,
\[
e f \vdzero \psi
\]
We similarly know that if $f \vdone (\exists x)\,\phi(x)$, then $e f
\vdone \psi$. So we can deduce
\[
e \vdzero (\exists x)\,\phi(x) \;\rightarrow\; \psi
\]

\subsubsection{Axioms of Equality}
\begin{proposition}
\label{vgidentsound}
One can construct realizers $\mathbf{i}_r, \mathbf{i}_s, \mathbf{i}_t
,\mathbf{i}_0, \mathbf{i}_1$ such that
\begin{enumerate}
\item $\mathbf{i}_r \vdzero (\forall x)\,x = x$
\item $\mathbf{i}_s \vdzero (\forall x, y)\;x = y \rightarrow y = x$
\item $\mathbf{i}_t \vdzero (\forall x, y, z)\,(x = y \rightarrow (y = z
  \rightarrow x = z))$
\item $\mathbf{i}_0 \vdzero (\forall x, y, z)\,(x = y \rightarrow (y \in
  z  \rightarrow x \in z))$
\item $\mathbf{i}_1 \vdzero (\forall x, y, z)\,(x = y \rightarrow (z \in
  x \rightarrow z \in y))$
\end{enumerate}

Furthermore, for each formula (without parameters), $\phi(x,
z_1,\ldots,z_n)$, there is $\mathbf{i}_\phi$ such that
\[
\mathbf{i}_\phi \vdzero x = y \rightarrow (\phi(x, z_1,\ldots,z_n)
\rightarrow \phi(y, z_1,\ldots,z_n))
\]
\end{proposition}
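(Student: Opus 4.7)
The plan is to construct each realizer by recursion using the fixed-point combinator $y'$ from the preceding proposition, and to verify correctness by a simultaneous induction on $\rk{a}$ for the atomic cases, then by induction on formula complexity for the schema. The key observation is that the defining clauses for $\vdzero a = b$ and $\vdzero a \in b$ strip off one layer of the rank hierarchy, so a recursive realizer can invoke itself at strictly lower rank. Moreover, since $\vg{\pcaa} \subseteq V_1(\pcaa)$ and the $\vdone$-clause in each definition of $\vdzero$ is just the standard realizability over $V(\pcaa)$ applied to $a^\circ$, the same terms that work for the $\vdzero$-part will also witness the $\vdone$-part, once one observes that $c \in \vg{\pcaa}$ (required for the induction on rank to stay inside $\vg{\pcaa}$) is part of the definition of $\vg{\pcaa}$ itself for every $\langle 0, e, c \rangle \in a$.

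For reflexivity I would define $\mathbf{i}_r$ as a fixed point satisfying $\mathbf{i}_r \simeq \mathbf{p}\, t\, t$ with $t := \lambda f.\, \mathbf{p}\, f\, \mathbf{i}_r$. Given $a \in \vg{\pcaa}$ and $\langle 0, f, c \rangle \in a$, the term $(\mathbf{i}_r)_0 f$ reduces to $\mathbf{p}\, f\, \mathbf{i}_r$, which witnesses $c \in a$ via the same label $f$ and the recursive realizer $\mathbf{i}_r \vdzero c = c$, available by induction since $\rk{c} < \rk{a}$. Symmetry is handled by $\mathbf{i}_s := \lambda e.\, \mathbf{p}\, (e)_1\, (e)_0$, which simply swaps the two halves of a realizer of equality. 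Transitivity again requires a fixed point: given $e \vdzero a = b$ and $f \vdzero b = c$, for each $\langle 0, g, d \rangle \in a$ I use $(e)_0 g$ to produce some $\langle 0, h, d' \rangle \in b$ with $((e)_0 g)_1 \vdzero d = d'$, then $(f)_0 h$ to produce $\langle 0, k, d'' \rangle \in c$ with $((f)_0 h)_1 \vdzero d' = d''$, and finally combine them by a recursive call to $\eqt$ on the equalities $d = d'$ and $d' = d''$.

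Items 4 and 5 follow from items 2 and 3 by pair manipulation rather than new recursion. For $\mathbf{i}_0$, given $e \vdzero a = b$ and $f \vdzero b \in c$ witnessed by $\langle 0, (f)_0, b' \rangle \in c$ with $(f)_1 \vdzero b = b'$, the term $\mathbf{p}\, (f)_0\, (\eqt\, e\, (f)_1)$ witnesses $a \in c$. For $\mathbf{i}_1$, given $e \vdzero a = b$ and $f \vdzero c \in a$ witnessed by $\langle 0, (f)_0, a' \rangle \in a$ with $(f)_1 \vdzero c = a'$, the component $(e)_0 (f)_0$ produces some $\langle 0, h, b' \rangle \in b$ with realizer of $a' = b'$, from which one combines with $(f)_1$ via $\eqt$ to obtain a witness of $c \in b$. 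In every case the $\vdone$-part holds by the corresponding standard realizer in $V(\pcaa)$, and one checks that the term just written is in fact that realizer.

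For the formula schema I would induct on the structure of $\phi$. The atomic cases reduce to items 4 and 5 (composed with $\mathbf{i}_s$ when $x$ appears on the right), conjunction and disjunction combine component realizers by pairing and case analysis in the usual way, and the existential case carries a witness through unchanged (the witness $a \in \vg{\pcaa}$ for $\phi(x)$ serves equally as a witness for $\phi(y)$ after substitution). The cases of implication, negation, and universal quantifier are the genuinely delicate ones, because the definition of $\vdzero$ at these connectives already demands a $\vdone$-compatibility clause, and $\mathbf{i}_\phi$ must witness both halves simultaneously; here the inductive hypothesis supplies both the $\vdzero$ and the $\vdone$ sub-realizers, and the standard realizability proof in $V(\pcaa)$ guarantees that the obvious combinator works for $\vdone$. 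The main obstacle, as I see it, is precisely this bookkeeping: ensuring that the single term $\mathbf{i}_\phi$ we produce witnesses both the $\vdzero$-restricted and the full $\vdone$-version of the substitution at every stage of the formula induction, uniformly in whether the bound quantifiers range over $\vg{\pcaa}$ or $V_1(\pcaa)$.
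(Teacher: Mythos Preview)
Your proposal is correct and follows essentially the same route as the paper: define $\mathbf{i}_r$ (and $\mathbf{i}_t$) by the fixed-point combinator so that the projection equations hold, verify by induction on rank using that every $\langle 0, f, c\rangle \in a$ has $c \in \vg{\pcaa}$, reduce the $\vdone$-half to the already-known $V(\pcaa)$ case via Proposition~\ref{multimpl}, and then build $\mathbf{i}_\phi$ by induction on the formula with implication and unbounded universal quantification as the representative cases. Your observation that $\mathbf{i}_0$ and $\mathbf{i}_1$ can be written directly in terms of $\mathbf{i}_t$ and pairing (rather than by a separate simultaneous recursion) is a harmless simplification; the paper simply cites McCarty for these and notes the argument adapts.
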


\begin{proof}
We take these realizers from the proof of theorem 6.3 in
\cite{mccarty} and check that they still work in this context.

Define $\mathbf{i}_r$ from the fixed point theorem so that
\begin{eqnarray*}
((\mathbf{i}_r)_0 f)_0 & = & f \\
((\mathbf{i}_r)_0 f)_1 & = & \mathbf{i}_r \\
((\mathbf{i}_r)_1 f)_0 & = & f \\
((\mathbf{i}_r)_1 f)_1 & = & \mathbf{i}_r 
\end{eqnarray*}

In order to show $\mathbf{i}_r \vdzero (\forall x)x = x$, by
proposition \ref{multimpl}, what we need to show is
\begin{enumerate}
\item for every $a \in \vg{\pcaa}$, $\mathbf{i}_r \vdzero a = a$
\item for every $a \in V_1(\pcaa)$, $\mathbf{i}_r \vdone a = a$
\end{enumerate}

However note that the second of these conditions is basically the same
as the statement $\mathbf{i}_r \Vdash a = a$ in $V(\pcaa)$. Hence we
only have to check the first condition.

Furthermore, since we already know that for every $a \in \vg{\pcaa}$,
$\mathbf{i}_r \vdone a = a$, all we have to check is the following:

\[
\forall \langle 0, f, b \rangle \in a \; (\mathbf{i}_r)_0 \vdzero b
\in a
\]
and
\[
\forall \langle 0, f, b \rangle \in a \; (\mathbf{i}_r)_1 \vdzero b
\in a
\]

We show by induction that these conditions hold for every $a \in
\vg{\pcaa}$.

Suppose that $\langle 0, f, b \rangle \in a$. Then since this has been
labelled with $0$, we know that $b$ \emph{is also partly
  symmetric}. Also $b$ appears earlier in the inductive definition of
$V_1(\pcaa)$, so we can apply induction here and the above arguments
to get

\[
\mathbf{i}_r \vdzero b = b
\]

However, recall that we defined $\mathbf{i}_r$ using the fixed point
theorem so that for all $f$,

\begin{eqnarray*}
((\mathbf{i}_r)_0 f)_0 & = & f \\
((\mathbf{i}_r)_0 f)_1 & = & \mathbf{i}_r \\
\end{eqnarray*}
(and the same equations for $(\mathbf{i}_r)_1$).

Hence $\mathbf{i}_r \vdzero a = a$ as required.

The proof that $\mathbf{i}_s$ works as required is trivial and still
holds here.

$\mathbf{i}_t$, $\mathbf{i}_0$ and $\mathbf{i}_1$ are also the same as
in \cite{mccarty} and the proofs that they are as required can be
similarly adapted to this context.

The $\mathbf{i}_\phi$ are constructed by induction on the construction
of $\phi$. We will explicitly show how to do this for unbounded
universal quantifiers and implication since these contain the main
ideas for the rest of the induction.

We first show how to construct $\mathbf{i}_{\phi \rightarrow \psi}$.

Suppose that $a, b, c \in \vg{\pcaa}$, $e \vdzero a = b$ and $f \vdzero
\phi(a, c) \rightarrow \psi(a, c)$. Suppose further that
\[
g \vdzero \phi(b, c)
\]

Then 
\[
\mathbf{i}_\phi (\mathbf{i}_s e) g \vdzero \phi(a, c)
\]
and so
\[
f (\mathbf{i}_\phi (\mathbf{i}_s e) g) \vdzero \psi(a, c)
\]
and finally
\[
\mathbf{i}_\psi e (f (\mathbf{i}_\phi (\mathbf{i}_s e) g))
\vdzero \psi(b, c)
\]

Hence we can apply similar reasoning for $\vdone$ and for $a, b, c \in
V_1(\pcaa)$ and use proposition
\ref{multimpl} to show that we can take $\mathbf{i}_{\phi \rightarrow
  \psi}$ to be
\[
\mathbf{i}_{\phi \rightarrow \psi} := 
\lambda (x, y, z).\mathbf{i}_\psi x (y (\mathbf{i}_\phi (\mathbf{i}_s x) z))
\]

For unbounded universal quantifiers, we show that we can take
$\mathbf{i}_{(\forall z)\phi(x,z)} := \mathbf{i}_{\phi(x, z)}$.
Suppose that
\[
\mathbf{i}_{\phi(x, z)}  \vdzero  (\forall z)\,(x = y \rightarrow (\phi(x, z)
\rightarrow \phi(y, z)))
\]
and suppose that for $a,b \in \vg{\pcaa}$, $e \vdzero a = b$ and
\[
f \vdzero (\forall z)\,\phi(a, z)
\]
Then for all $c \in \vg{\pcaa}$,
\[
f \vdzero \phi(a, c)
\]
and so
\[
\mathbf{i}_{\phi(x, z)} e f \vdzero \phi(b, c)
\]
One can check the corresponding case for $c \in V_1(\pcaa)$ to get
\[
\mathbf{i}_{\phi(x, z)} e f \vdzero (\forall z)\,\phi(b, z)
\]
as required.
\end{proof}

\begin{proposition}
  We can find realizers for the axioms for bounded quantifiers. That
  is, we can find realizers for the following statements.

\begin{enumerate}
\item $(\forall x \in a)\,\phi(x) \;\rightarrow\; (\forall x)\,(x \in a
  \rightarrow \phi(x))$
\item $(\forall x)\,(x \in a \rightarrow \phi(x)) \;\rightarrow\; (\forall x
  \in a)\,\phi(x)$
\item $(\exists x \in a)\,\phi \;\rightarrow\; (\exists x)\,(x \in a \wedge
  \phi(x))$
\item $(\exists x)\,(x \in a \wedge \phi(x)) \;\rightarrow\; (\exists x \in
  a)\,\phi(x)$
\end{enumerate}
\end{proposition}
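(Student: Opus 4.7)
The plan is to handle all four statements uniformly by constructing explicit combinators and invoking Proposition \ref{multimpl} to reduce each goal to checking the action of the combinator pointwise at both $\vdzero$ and $\vdone$. The $\vdone$ verifications are essentially the corresponding statements in $V(\pcaa)$, which are standard and use the same shape of realizer, so the interesting work is at $\vdzero$. The main technical tool will be the extensionality combinators $\mathbf{i}_r$, $\mathbf{i}_s$, and $\mathbf{i}_\phi$ from the previous proposition, together with the pairing combinator $\mathbf{p}$.

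For (1), the key observation is that if $f \vdzero b \in a$ then by definition there is $\langle 0, (f)_0, c\rangle \in a$ with $(f)_1 \vdzero b = c$, so given a realizer $r \vdzero (\forall x \in a)\phi(x)$ I obtain $r.(f)_0 \vdzero \phi(c)$ and then transfer along $c = b$ using $\mathbf{i}_\phi$ to produce a realizer of $\phi(b)$; roughly $(\lambda r, f).\mathbf{i}_\phi (\mathbf{i}_s (f)_1)(r(f)_0)$ works. For (2), given $r \vdzero (\forall x)(x \in a \rightarrow \phi(x))$ and $\langle 0, f, b\rangle \in a$, note that $b$ lies in $\vga$ (since $a \in \vga$ and the tag is $0$) and that $\mathbf{p} f \mathbf{i}_r \vdzero b \in a$, so $r(\mathbf{p} f\mathbf{i}_r) \vdzero \phi(b)$ and we may take $(\lambda r, f).r(\mathbf{p} f \mathbf{i}_r)$.

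Cases (3) and (4) are dual and simpler. For (3), a realizer $e \vdzero (\exists x \in a)\phi(x)$ supplies some $\langle 0, (e)_0, b\rangle \in a$ with $(e)_1 \vdzero \phi(b)$ and this $b$ is automatically in $\vga$, so the witness $b$ together with the realizer $\mathbf{p}(\mathbf{p}(e)_0 \mathbf{i}_r)(e)_1$ does the job. For (4), given $e \vdzero (\exists x)(x \in a \wedge \phi(x))$ there is $b \in \vga$ with $(e)_0 \vdzero b \in a$ and $(e)_1 \vdzero \phi(b)$; unfolding $(e)_0$ gives $\langle 0, ((e)_0)_0, c\rangle \in a$ with $((e)_0)_1 \vdzero b = c$, and transferring $\phi(b)$ to $\phi(c)$ by $\mathbf{i}_\phi$ yields $\mathbf{p} ((e)_0)_0 (\mathbf{i}_\phi ((e)_0)_1 (e)_1) \vdzero (\exists x \in a)\phi(x)$.

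The only mild obstacle is bookkeeping: because $\vdzero (\forall x)\psi(x)$ and $\vdzero \psi \rightarrow \theta$ bake in an auxiliary $\vdone$ clause, one must verify that each combinator above also realizes the corresponding implication at $\vdone$. But the $\vdone$ conditions are precisely the statements of this proposition inside $V(\pcaa)$, which follow by the same calculations (with all tags and $\vga$-membership replaced by the $V(\pcaa)$ versions) and were already established in \cite{mccarty}. Proposition \ref{multimpl} then assembles everything into the desired realizers.
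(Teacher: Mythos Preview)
Your proposal is correct and follows essentially the same approach as the paper: the paper simply cites the corresponding result in \cite{rathjen06} and says to apply Proposition~\ref{multimpl} where necessary, while you have written out explicitly the standard realizers that this adaptation produces. The only cosmetic discrepancy is that the paper points to \cite{rathjen06} rather than \cite{mccarty} for the $\vdone$ clauses, but the content is the same.
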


\begin{proof}
  The proof of theorem 4.3 from \cite{rathjen06} can easily be adapted
  using proposition \ref{vgidentsound} and applying proposition
  \ref{multimpl} where necessary.
\end{proof}

The following help illustrate the relation between
realizability in $\vg{\pcaa}$ and $V(\pcaa)$.
\begin{definition}
We say that $a \in \vg{\pcaa}$ is \emph{(completely) symmetric} if
every element of $a$ is of the form
\[
\langle 0, e, b \rangle
\]
where $b$ is completely symmetric. (This is an inductive definition).
\end{definition}

\begin{proposition}
\label{boundedpreserve1}
Suppose that $\phi$ is a bounded formula, all of whose parameters are
completely symmetric. Then
\[
e \vdzero \phi \text{ iff } e \vdone \phi
\]
\end{proposition}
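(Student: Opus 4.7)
The plan is to induct on the structure of $\phi$. Since Proposition \ref{realpreserve} already gives $e \vdzero \phi \Rightarrow e \vdone \phi$ for every formula, only the reverse implication requires work, and all the effort concentrates on (a) the atomic cases, and (b) cleanly peeling off connectives and bounded quantifiers while keeping the ``complete symmetry of parameters'' hypothesis intact through the induction.

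For the atomic cases $a \in b$ and $a = b$ with $a, b$ completely symmetric, I would run a secondary induction on $\max(\rk{a},\rk{b})$ handling $\in$ and $=$ simultaneously. The key observation is that if $b$ is completely symmetric then every triple in $b$ has the form $\langle 0, f, c\rangle$ with $c$ completely symmetric, and $b^\circ = \{\langle f, c^\circ\rangle \mid \langle 0, f, c\rangle \in b\}$. So unpacking $e \vdone a \in b$ produces some $\langle 0,(e)_0,c\rangle \in b$ (with $c$ completely symmetric) together with $(e)_1 \vdone a = c$; the rank induction converts this to $(e)_1 \vdzero a = c$, giving $e \vdzero a \in b$. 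The equality clause is handled similarly, using that the extra ``$e \vdone a = b$'' conjunct in the definition of $\vdzero$ is precisely what is being assumed.

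For the inductive step, each connective is essentially mechanical: $\wedge$ and $\vee$ are immediate because the $\vdzero$ and $\vdone$ clauses have the same shape and the induction hypothesis transports the sub-realizers. For $\phi \rightarrow \psi$, note that $\vdone (\phi \rightarrow \psi)$ is literally the ``$\vdone$ half'' already demanded by $\vdzero (\phi\rightarrow\psi)$, and the ``$\vdzero$ half'' follows by taking any $f \vdzero \phi$, invoking Proposition \ref{realpreserve} to get $f \vdone \phi$, applying the assumption to obtain $e.f \vdone \psi$, and then applying the inductive hypothesis for $\psi$. The clause for $\neg \phi$ is even easier because $e \vdzero \neg\phi$ and $e \vdone \neg\phi$ unfold to the same condition, namely $\nexists f\;f \vdone \phi$. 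For bounded $(\forall x \in a)\phi(x)$ and $(\exists x \in a)\phi(x)$ with $a$ completely symmetric, the crucial point is that elements of $a$ come packaged as $\langle 0, f, b\rangle$ with $b$ itself completely symmetric, so the formula $\phi(b)$ to which we apply the induction hypothesis has only completely symmetric parameters, and the two clauses then align.

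The only step I expect to be subtle is the atomic case: one has to verify that the rank induction still goes through even though the definition of $\vdzero a = b$ contains the extra ``$e \vdone a = b$'' conjunct, and one has to be careful that $\vdone$ really does match the $V(\pcaa)$-realizability of $a^\circ = b^\circ$ when $a, b$ are completely symmetric (i.e.\ that the $\circ$ operation loses no information in this case). Everything else is essentially a bookkeeping exercise, and Proposition \ref{multimpl} is not needed since the statement has no outer universal closure over elements of $\vg{\pcaa}$.
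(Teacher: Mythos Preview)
Your proposal is correct and is essentially the same argument the paper gives, only spelled out in full. The paper's proof is a single sentence: when all parameters are completely symmetric, the clauses defining $\vdzero$ and $\vdone$ literally coincide for every connective and for bounded quantifiers (the only divergence in the definitions occurs at unbounded quantifiers, which are excluded by hypothesis). Your structural induction, together with the rank induction for the atomic cases, is exactly what is needed to verify that one-line claim carefully, and you have identified the right reason it works: complete symmetry guarantees that every element of a parameter is again labelled $0$ and completely symmetric, so the $\vdzero$-clauses and the $\vdone$-clauses quantify over the same triples and the extra ``$\vdone$'' conjuncts in the $\vdzero$ definition are already part of the hypothesis.
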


\begin{proof}
When all parameters are completely symmetric the two definitions of
realizability agree for everything except unbounded quantifiers.
\end{proof}

We now move on to the proof of soundness for the axioms of set theory.
To make things easier, we assume a background universe of $\zfc$, and
show the soundness of $\izf$.

\begin{theorem}
$\vg{\pcaa}$ satisfies the axioms of $\izf$.
\end{theorem}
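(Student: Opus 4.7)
The plan is to proceed axiom by axiom, in each case recycling McCarty's realizers from \cite{mccarty} and using Proposition \ref{multimpl} to split the verification into two clauses: the $\vdzero$-clause on parameters drawn from $\vg{\pcaa}$ and the $\vdone$-clause on parameters drawn from $V_1(\pcaa)$. Via the projection $a \mapsto a^\circ$, the $\vdone$-clause is essentially nothing more than McCarty's soundness theorem for $V(\pcaa) \models \izf$ and requires no new work. The genuinely new content is in the $\vdzero$-clause: for every set $b$ constructed along the way, one must verify that $\stab{G}{b} \in \Gamma$ and that every $0$-tagged member of $b$ is itself partly symmetric, so that $b$ lies in $\vg{\pcaa}$.

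For Extensionality, Pairing, Union, Infinity and $\in$-induction the stabiliser check is straightforward. Pairing of $a, b \in \vg{\pcaa}$ can be realized by $\{\langle 0, \underline{0}, a\rangle, \langle 0, \underline{1}, b\rangle\}$, whose stabiliser contains $\stab{G}{a} \cap \stab{G}{b} \in \Gamma$ since numerals are automorphism-fixed; Union is analogous. Infinity is realized by the obvious re-tagging of $\overline{\omega}$, which is $G$-invariant because it has no non-trivial parameters. For $\in$-induction, McCarty's fixed-point realizer goes through verbatim, with the recursion terminating by Proposition \ref{eqrank}. Separation for $\phi(x, \vec{p})$ with $\vec{p} \in \vg{\pcaa}$ produces the evident subset of $a$, whose stabiliser contains $\stab{G}{a} \cap \bigcap_i \stab{G}{p_i}$, which lies in $\Gamma$ by normality. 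Power Set is handled by the usual characteristic-function construction, restricted to the partly symmetric subsets.

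The main obstacle is Collection. Given a realizer $e \vdzero (\forall x \in a)(\exists y)\phi(x, y)$, one must produce a single $b \in \vg{\pcaa}$ bounding the witnesses, and the subtlety is that $\stab{G}{b}$ must still lie in $\Gamma$. Picking a specific witness for each $\langle 0, f, c\rangle \in a$ via $e$ would in general destroy invariance under $\stab{G}{a}$, because an automorphism $\sigma \in G$ need not fix $e$. The remedy is to use reflection in the background $\zfc$ to find an ordinal $\alpha$ so that every $\langle 0, f, c\rangle \in a$ admits some witness in $\vg{\pcaa} \cap V_\alpha$, and then collect them all at once by taking
$$b = \{\langle 0, f, y\rangle : y \in \vg{\pcaa} \cap V_\alpha \text{ and } \exists \langle 0, f, c\rangle \in a \; \exists g \; g \vdzero \phi(c, y)\}.$$
Because $V_\alpha$ is stable under any permutation of $V_1(\pcaa)$ induced by an automorphism of $\pcaa$ (such a permutation preserves rank), the defining condition is invariant under any automorphism fixing $a$ together with the parameters of $\phi$, so $\stab{G}{b} \in \Gamma$, and the $0$-tagged members of $b$ lie in $\vg{\pcaa}$ by construction. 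A realizer for the conclusion then simply packages each $f$ together with a realizer of $\phi(c, y)$ extracted from $e.f$, giving an element of $b$ with a matching code.
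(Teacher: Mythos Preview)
Your overall strategy is right, but there is a genuine gap, and it stems from a misreading of how $\vdzero$ interacts with $\vdone$. You treat the two clauses of Proposition~\ref{multimpl} as cleanly separable, with the $\vdone$-clause handled ``for free'' by McCarty's theorem. But the definition of $\vdzero$ for bounded universal quantifiers (and for $=$ and $\rightarrow$) \emph{includes} a $\vdone$ sub-clause. Hence when you construct a single witness $b \in \vg{\pcaa}$ for a set-existence axiom, that same $b$ must simultaneously serve the embedded $\vdone$-clause, and that clause ranges over \emph{all} elements of $a$, including the $1$-tagged ones whose third components need not lie in $\vg{\pcaa}$.

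Your Collection set illustrates the problem concretely. Your $b$ contains only $0$-tagged triples $\langle 0, f, y\rangle$ with $y \in \vg{\pcaa}$, indexed by the $0$-tagged elements of $a$. But a partly symmetric $a$ may contain $\langle 1, f, c\rangle$ with $c \notin \vg{\pcaa}$; the hypothesis then gives only $e.f \vdone \phi(c, d)$ for some $d \in V_1(\pcaa)$, and nothing forces such a $d$ into $\vg{\pcaa}$, so $b^\circ$ has no suitable witness and the $\vdone$-clause buried inside $r.e \vdzero (\forall x \in a)(\exists y \in b)\phi$ fails. (If $a$ has \emph{only} $1$-tagged elements, which is perfectly compatible with $a \in \vg{\pcaa}$, your $b$ is empty.) The paper repairs this systematically by building each witness set as a union $S_0 \cup S_1$, $P_0 \cup P_1$, $C_0' \cup C_1'$, etc., where the $1$-tagged half carries arbitrary $V_1(\pcaa)$-witnesses for the $\vdone$-level; this half never obstructs partial symmetry, since the definition of $\vg{\pcaa}$ imposes no condition on $1$-tagged members. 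For Collection specifically, the paper then secures the stabiliser condition not via reflection but simply by closing $C_0' \cup C_1'$ under all of $G$, which is harmless because the realizer is indifferent to superfluous elements.
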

We first deal with what are sometimes referred to as ``set existence
axioms.'' That is, axioms of the form
\[
(\forall z_1,\ldots,z_n)(\forall x)(\exists y)\,\phi(x, y, z_1, \ldots, z_n)
\]
where the free variables of $\phi$ are amongst $x, y, z_1,\ldots,z_n$.
For these axioms we can apply proposition \ref{multimpl} to show that
it is sufficient to find $e$ such that for every $a, c_1, \ldots, c_n \in
V_1(\pcaa)$, there is $b \in V_1(\pcaa)$ such that 
\[
e \vdone \phi(a, b, c_1, \ldots, c_n)
\]
and for every $a, c_1, \ldots, c_n \in \vg{\pcaa}$ there is $b \in
\vg{\pcaa}$ such that 
\[
e \vdzero \phi(a, b, c_1, \ldots, c_n)
\]

However, the first of these statements follows from the soundness
theorem for $V(\pcaa)$. Hence we only have to check the second of
these conditions.

\paragraph{Separation}
By the above reasoning, what we need to show is the following
statement:

Suppose that $e$ is the usual realizer for separation from
\cite{mccarty} or \cite{rathjen06}, $A$ is a partly symmetric set, and
$\phi(x)$ is a formula with partly symmetric parameters. Then there is
a partly symmetric set, $S$, such that

\[
e \vdzero ((\forall x \in A)\;\phi(x) \rightarrow x \in S) \;\wedge\; 
((\forall x \in S)\;x \in A \wedge \phi(x))
\]

We construct this $S$ as follows:
\begin{eqnarray*}
S_0 & = & \{ \langle 0, \mathbf{p}f g, a \rangle \; | \; \langle 0, f, a
\rangle \in A \,\wedge\, g \vdzero \phi(a) \} \\
S_1 & = & \{ \langle 1, \mathbf{p}f g, a \rangle \; |\; \langle s, f, a
\rangle \in A \,\wedge\, g \vdone \phi(a) \} \\
S & = & S_0 \cup S_1 \\
\end{eqnarray*}

Suppose that $H$ is the intersection of stabilisers of $A$ and all the
parameters of $\phi$. Note that $H \in \Gamma$.

Let $\alpha \in H$, and $\langle 0, \mathbf{p}f g, a \rangle \in
S_0$. Then $\langle 0, f, a \rangle \in A$ and $g \vdzero
\phi(a)$. Since $\alpha \in H$, we know that $\langle 0, \alpha(f),
\alpha(a) \rangle \in A$ and $\alpha(g) \vdzero
\phi(\alpha(a))$. Hence we also have $\langle 0, \alpha(\mathbf{p}f
g), \alpha(a) \rangle \in S_0$. One can show the same result for
$S_1$ and hence get $\stab{G}{S} \in \Gamma$. Note further that if 
$\langle 0, \mathbf{p}f g, a \rangle \in S$ then also $\langle 0, f, a
\rangle \in A$ and so $a$ is partly symmetric. We can now deduce that
$S$ is partly symmetric.

One can easily check that the usual realizer does still work for $S$.

\paragraph{Power Set}

As before, note that we only have to check power set for partly
symmetric sets. Hence let $A \in \vg{\pcaa}$.

Let
\begin{eqnarray*}
P_0 & = & \{ \langle 0, e, b \rangle \;|\; b \in \vg{\pcaa}, e \vdzero
b \subseteq A \} \\ P_1 & = & \{ \langle 1, e, b \rangle \;|\; b \in
V_1(\pcaa), e
\vdone b \subseteq A \} \\ P & = & P_0 \cup P_1 
\end{eqnarray*}

To show that $P_0$ and $P_1$ are both sets, one can use a notion of
rank as in chapter 2 of \cite{mccarty}.

Alternatively, we can prove that $P_0$ and $P_1$ are sets as follows.
Show by induction (and power set in the background universe) that for
any $a \in V_1(\pcaa)$, $\{b \in V_1(\pcaa) \;|\; (\exists e \in
\pcaa) \, e \vdone b = a \}$ is a set and deduce that for $a \in
\vg{\pcaa}$, $\{b \in
\vg{\pcaa} \;|\; (\exists e \in \pcaa)\; e \vdzero b = a \}$ is also a
set. Then this implies that for any $a \in V_1(\pcaa)$, $\{ b \in
V_1(\pcaa) \;|\; (\exists e \in \pcaa)\, e \vdone b \subseteq a \}$ is
a set and if $a \in \vg{\pcaa}$, $\{ b \in \vg{\pcaa} \;|\; (\exists e
\in \pcaa)\, e \vdzero b \subseteq a \}$ is a set, and hence $P_0$,
$P_1$ and $P$ are also sets.

Now note that if $e \vdzero b \subseteq A$ and $\alpha \in
\stab{G}{A}$ then $\alpha(e) \vdzero \alpha(b) \subseteq A$, and
similarly if $e \vdone b \subseteq A$, and we have ensured that any
elements of $P$ labelled with $0$ are partly symmetric. Hence $P$ is
partly symmetric.

One can easily show that the realizer in \cite{mccarty} still works
here.

\paragraph{Union}
We assume that we are given a set $A \in \vg{\pcaa}$ and
construct a set to show the union axiom. Since we already have full
separation, we only have to construct a $U$ such that we have a
realizer for $(\forall x \in A)(\forall y \in x)\,y \in U$.

Let
\[
U = \{\langle 0, \underline{0}, b \rangle \; |\; \langle 0, e, c \rangle
\in A, \langle 0, f, b \rangle \in c \} \cup
\{ \langle 1, \underline{0}, b \rangle \;|\; \langle s, e, c \rangle \in
A, \langle s', f, b \rangle \in c \}
\]
Note that
\[
(\pcak (\pcak (\mathbf{p} \underline{0} \mathbf{i}_r)))
\vdzero (\forall x \in A)(\forall y \in x)\,y \in U
\]

\paragraph{Pair}
Given $a, b \in \vg{\pcaa}$, consider the set
\[
P = \{ \langle 0, \underline{0}, a \rangle, \langle 0, \underline{1}, b \rangle \}
\]
We can easily see that
\[
e \vdzero (\forall x)(x \in P \leftrightarrow (x = a \vee x = b))
\]

\paragraph{Infinity}
We check that the proof in \cite{rathjen06} still holds here. We use
the same $\bar{\omega}$ as in section \ref{vaint}. We write $\bot_v$
for the formula $(\forall x \in v)\;\bot$, and write $SC(x, y)$ for $y =
x \cup \{x\}$ (expressed as a bounded formula).

Note first that we can apply proposition \ref{boundedpreserve1} and
the soundness theorem in \cite{rathjen06} to reduce the problem to
finding a realizer for
\[
(\forall v)((\bot_v \vee (\exists u \in \bar{\omega})\, SC(u, v))
\rightarrow v \in \bar{\omega})
\]

Since we can clearly find a realizer to show that the empty set is in
$\bar{\omega}$, this is reduced to finding a realizer for
\[
(\forall v)((\exists u \in \bar{\omega})\,SC(u, v)
\;\rightarrow\; v \in \bar{\omega})
\]

Hence we assume that there is $a \in \vg{\pcaa}$ with
$
e \vdzero (\exists u \in \bar{\omega})\;SC(u, a)
$.
So there must be some $n$ such that $(e)_0 = \underline{n}$ and
$
(e)_1 \vdzero SC(\overline{n}, a)
$.

One can clearly find a realizer for $ SC(\overline{n}, \overline{n +
  1}) $ and hence a realizer, using the soundness of extensionality
(once we have checked this) for $ SC(u, v) \wedge SC(u, v')
\rightarrow v = v' $.  We can use these to construct a realizer for $
a \in \overline{\omega} $, as required.

\paragraph{Collection}
Assume 
\[
e \vdzero (\forall x \in A)(\exists y)\,\phi(x, y)
\]
where $\phi$ is a formula with all parameters partly symmetric.

By collection in the background universe, we can find a $C_0$ such
that whenever $\langle 0, f, a \rangle \in A$, there is $\langle 0,
\underline{0}, c \rangle \in C_0$ such that $c$ is partly symmetric and
$e.f \vdzero \phi(a, c)$. Note that $C'_0 := C_0 \cap \{0\} \times
\{\underline{0}\} \times \vg{\pcaa}$ still has this property, but is an
element of $V_1(\pcaa)$ such that for every $\langle 0, g, c \rangle
\in C'_0$, $c$ is partly symmetric.

Similarly, there is a $C_1'$, such that every element of $C'_1$ is of
the form $\langle 1, \underline{0}, c \rangle$ with $c \in V_1(\pcaa)$
and whenever $\langle s, f, a \rangle \in A$, there is $\langle 1,
\underline{0}, c \rangle \in C_1'$ such that $e f \vdone \phi(a, c)$.

Let $C = C_0' \cup C_1'$, and let $C'$ be the closure of $C$ under all
automorphisms in $G$. Note that $C' \in \vg{\pcaa}$ and this set
together with the usual realizer from \cite{mccarty} is enough to show 
the soundness of collection.

\paragraph{Extensionality}
One can check that the realizers for the formula
\[
((\forall x \in a)\,x \in b) \wedge ((\forall x \in b)\,x \in a)
\]
in fact are already realizers for $a = b$, so we can use the identity
to show extensionality (in this form).

\paragraph{$\in$-Induction}

Suppose that $$e \vdzero (\forall y)((\forall x \in y)\,\phi(x)
\;\rightarrow\; \phi(y))$$

Let $e' = \lambda (x, y).e x$ and let $f$ be given by the fixed point
theorem so that for all $g$
\[
f g \simeq e' f g
\]
Note that we know
\[
e \vdone (\forall y)((\forall x \in y)\,\phi(x)
\;\rightarrow\; \phi(y))
\]
and so by the usual proof we have that for all $a \in V_1(\pcaa)$, and
all $g \in \pcaa$, $f g \vdone \phi(a)$. We claim that for all $a \in
\vg{\pcaa}$, and all $g \in \pcaa$, $f g \downarrow$ and $f g \vdzero
\phi(a)$.

So suppose that $a \in \vg{\pcaa}$. Then for every $\langle 0, g, b
\rangle \in a$, we know by induction in the background universe (since
$b$ must be partly symmetric and appears earlier in the inductive
definition of $V_1(\pcaa)$ than $a$) that $f g \downarrow$ and $f g
\vdzero \phi(b)$. We also know from the above that $f \vdone (\forall
x \in a)\,\phi(x)$. Hence $f \vdzero (\forall x \in a)\,\phi(x)$. Thus
we have for any $g \in \pcaa$, $e' f g \simeq e f$ (is defined and)
realizes $\phi(a)$. But $e' f g \simeq f g$ and so $f g \vdzero
\phi(a)$ as required.

\qed

\begin{remark}
  Note that when we proved the axiom of infinity we used the same
  standard representation $\overline{\omega}$ as for $V(\pcaa)$. Note
  further that if $f \in \pcaa$ is such that for all $n \in \omega$
  there is $m \in \omega$ with $f \underline{n} = \underline{m}$,
  then the $\overline{f}$ from section \ref{vaint} is completely
  symmetric and hence we have the same standard representations of the
  naturals and Baire space as we did before.
\end{remark}

\section{The Model $\vip{\pcaa}$}
We say that $a \in V(\pcaa)$ is \emph{functional} if for any
$\langle e, b \rangle, \langle e', b' \rangle \in a$, if $e = e'$ then
$b = b'$.

We will define $\vip{\pcaa}$ using inductive definitions. Define the
operator $\powip$ by
\[
\powip(X) := \{ a \in \powset{|\pcaa| \times X} \;|\; a \text{ is
  functional}\}
\]
We then define $\vip{\pcaa}$ as the smallest class $X$ satisfying 
\[
\powip(X) \subseteq X
\]

We define realizability on $\vip{\pcaa}$ as follows. We write $\vdone$
for realizability at $V(\pcaa)$.

\begin{eqnarray*}
e \vdzero a \in b & \text{iff} & \text{there is } \langle (e)_0, c \rangle \in
b \text{ such that } (e)_1 \vdzero a = c \\
e \vdzero a = b & \text{iff} & \text{for every } \langle f, c \rangle \in a,\,
(e)_0 f \vdzero c \in b \text{ and } \\
& & \qquad \text{for every } \langle f, c \rangle
\in b,\, (e)_1 f \vdzero c \in a \\
e \vdzero \phi \wedge \psi & \text{iff} & (e)_0 \vdzero \phi \text{
  and }
(e)_1 \vdzero \psi \\
e \vdzero \phi \vee \psi & \text{iff} & \text{either } (e)_0 =
\underline{0} \text{ and }
(e)_1 \vdzero \phi, \text{ or } (e)_0 = \underline{1} \text{ and } (e)_1 \vdzero
\psi \\
e \vdzero \phi \rightarrow \psi & \text{iff} & f \vdzero \phi \text{
  implies } e f \vdzero \psi \text{ and } e \vdone \phi \rightarrow
\psi \\
e \vdzero (\exists x \in a)\,\phi(x) & \text{iff} & \text{there is } \langle 
(e)_0, b \rangle \in a \text{ such that } (e)_1 \vdzero \phi(b) \\
e \vdzero (\forall x \in a)\,\phi(x) & \text{iff} & \text{for every } \langle 
f, b \rangle \in a,\, e f \vdzero \phi(b), \text{ and } e \vdone
(\forall x \in a)\,\phi(x)\\
e \vdzero (\exists x)\,\phi(x) & \text{iff} & \text{there is } a \in \vip{\pcaa} 
\text{ such that } e \vdzero
\phi(a) \\
e \vdzero (\forall x)\,\phi(x) & \text{iff} & \text{for every } a \in \vip{\pcaa},\,
e \vdzero \phi(a), \text{ and } e \vdone (\forall x)\,\phi(x) \\
e \vdzero \neg \phi & \text{iff} & f \vdone \phi \text{ is false for
  every } f \in \pcaa \\
\end{eqnarray*}
We write $\vip{\pcaa} \models \phi$ to mean that there is some $e \in
\pcaa$ such that $e \vdzero \phi$.

\begin{remark}
\label{boundedsame}
This is a much simpler embedding than that of
$\vg{\pcaa}$. We have not needed to alter the definition of
realizability for bounded universal quantification and equality in
order to ensure realizability is preserved. Hence, realizability for
bounded formulae is identical in $\vip{\pcaa}$ and $V(\pcaa)$. 
\end{remark}

\begin{proposition}
  $\vip{\pcaa}$ is sound with respect to the intuitionistic predicate
  calculus and satisfies the axioms of equality and the axioms for
  bounded quantifiers.
\end{proposition}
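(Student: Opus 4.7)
The plan is to transcribe the soundness arguments already given for $\vg{\pcaa}$, exploiting the fact that the definition of $\vdzero$ on $\vip{\pcaa}$ has exactly the same logical shape as the one on $\vg{\pcaa}$: it combines an internal clause quantifying over the submodel with the $V(\pcaa)$-external condition $\vdone$ at precisely the connectives $=$, $\rightarrow$, $\forall$ (and $\neg$). First I would establish the direct analogue of Proposition \ref{multimpl} for $\vip{\pcaa}$, so that to verify an iterated implication or universal quantification it suffices to check the $\vdzero$ obligations over $\vip{\pcaa}$ and the $\vdone$ obligations over $V(\pcaa)$ independently. The proof is by induction on the number of quantifiers or premises, exactly as before.

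I would then go through the twelve IPL axioms in turn. The ten propositional axioms are realized by the same combinators built from $\pcas$ and $\pcak$ that were used for $\vg{\pcaa}$; since these combinators neither depend on nor manipulate the underlying sets, the earlier arguments transfer verbatim. For axioms 11 and 12 the identity $I = \pcas \pcak \pcak$ suffices: given any $b \in \vip{\pcaa}$, the $\vdzero$-clause is immediate from the semantics of $\forall$ and $\exists$ in $\vip{\pcaa}$, and the $\vdone$-clause follows from the soundness theorem for $V(\pcaa)$ together with the inclusion $\vip{\pcaa} \subseteq V(\pcaa)$. The three inference rules (modus ponens, $\forall$-introduction, $\exists$-elimination) are checked exactly as for $\vg{\pcaa}$, using the analogue of multimpl to discharge the $\vdzero$ and $\vdone$ halves of each verification in parallel.

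For the equality axioms I would take the realizers $\mathbf{i}_r$, $\mathbf{i}_s$, $\mathbf{i}_t$, $\mathbf{i}_0$, $\mathbf{i}_1$ and the uniform $\mathbf{i}_\phi$ from McCarty's proof, constructed via the fixed point theorem and, for $\mathbf{i}_\phi$, by recursion on formula complexity. These are pure pca-elements and do not construct any new sets, so there is no issue with preserving injective presentation. Verifying $\mathbf{i}_r \vdzero (\forall x)\, x = x$ proceeds by induction on rank in $\vip{\pcaa}$; the crucial point, and indeed the only place where injectivity even enters, is that subelements of an injectively presented set are again injectively presented, so the induction goes through unimpeded. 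The construction of $\mathbf{i}_\phi$ at implication and at unbounded universal quantifiers follows the same pattern as in the $\vg{\pcaa}$ case. The main obstacle is essentially bookkeeping: each appeal to ``partly symmetric'' in the $\vg{\pcaa}$ argument must be replaced by the (strictly easier) appeal to ``injectively presented'', and every quantifier clause must be discharged against both $\vdzero$ and $\vdone$. Since the logical realizers are entirely insensitive to the ambient universe of sets, there is no genuine novelty beyond what has already been handled for $\vg{\pcaa}$.
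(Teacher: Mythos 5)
Your proposal is correct and matches the paper's approach; the paper itself disposes of this proposition with the single remark that the proof is exactly the same as the one already given in detail for $\vg{\pcaa}$, which is what you carry out. One small overstatement: in the $\vip{\pcaa}$ clauses for $=$ and bounded $\forall$ there is in fact no added $\vdone$ conjunct (this is Remark \ref{boundedsame} — bounded realizability is \emph{identical} to that of $V(\pcaa)$ because all elements of an injectively presented set are again injectively presented), so your rank induction for $\mathbf{i}_r$ is not needed; $\mathbf{i}_r \vdzero a = a$ is immediate from the $V(\pcaa)$ soundness theorem. This makes your argument slightly longer than necessary but not incorrect.
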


\begin{proof}
This follows by exactly the same proof as for $\vg{\pcaa}$.
\end{proof}

It remains to check that when we show the soundness of the axioms of
$\czf$, we can assume the sets we construct are functional.
Since we will require choice in the background universe for this
proof, we work over a background universe of $\zfc$.

\begin{theorem}
$\vip{\pcaa}$ is sound with respect to the axioms of $\czf$.
\end{theorem}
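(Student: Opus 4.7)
The plan is to mirror the soundness proof just given for $\vg{\pcaa}$, with the major simplification that there is no symmetry bookkeeping -- the only extra constraint on each witness is that it be injectively presented. The same reduction applies: the analogue of Proposition \ref{multimpl} carries over unchanged (with $\vg{\pcaa}$ replaced by $\vip{\pcaa}$), so for each set-existence axiom of $\czf$ it suffices to (i) construct an injectively presented witness in $\vip{\pcaa}$ and (ii) check the standard realizer from $V(\pcaa)$. The $\vdone$ half of each clause in the definition of $\vdzero$ is then automatic from soundness of $\izf$ (and hence of $\czf$) over $V(\pcaa)$.

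For Pairing, Union, Bounded Separation, Extensionality, Strong Infinity, and $\in$-induction the usual constructions are, or are easily made, injectively presented. For instance, $\{\langle \mathbf{0}, a\rangle, \langle \mathbf{1}, b\rangle\}$ suffices for pairing since $\mathbf{0} \neq \mathbf{1}$; the standard union set $U = \{\langle \mathbf{p}ef, b\rangle : \langle e, c\rangle \in A,\ \langle f, b\rangle \in c\}$ is injectively presented by the injectivity of $\mathbf{p}$ together with the injective presentation of $A$ and, inductively, of each such $c$; for bounded separation, $S = \{\langle \mathbf{p}fg, a\rangle : \langle f, a\rangle \in A,\ g \vdzero \phi(a)\}$ is injectively presented by the same reasoning, and Remark \ref{boundedsame} reduces the realizability check to the one already done in $V(\pcaa)$; $\overline{\omega}$ is injectively presented because the numerals are distinct. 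Extensionality is again witnessed by the identity, and $\in$-induction follows by the same fixed-point argument as for $\vg{\pcaa}$, now with the induction carried out on $\rk{a}$ for $a \in \vip{\pcaa}$ and no symmetry side conditions.

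The genuinely new ingredient is Strong Collection, and this is where background $\zfc$-choice enters. Given $A \in \vip{\pcaa}$ and $e \vdzero (\forall x \in A)(\exists y)\phi(x, y)$, each $\langle f, c\rangle \in A$ yields some $d \in \vip{\pcaa}$ with $e.f \vdzero \phi(c, d)$; by choice in the background universe select one such $d_f$. Because $A$ is injectively presented, $c$ is determined by $f$, so $B := \{\langle f, d_f\rangle : \langle f, c\rangle \in A \text{ for some } c\}$ is itself injectively presented, and $(\lambda f).\mathbf{p} f (ef)$ realizes both halves of strong collection at $B$. Subset Collection is handled in the same spirit: for each pair $(e, u)$ with $e$ realizing the hypothesis at parameter $u \in \vip{\pcaa}$, use background choice to pick an injectively presented $d_{e, u}$ collapsing the relevant multivalued relation to a subset of $b$, and then assemble the $d_{e, u}$ into a single injectively presented $C$, using background collection to bound the class of relevant $u$ by a set.

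I expect the main obstacle to be Subset Collection, where three moving parts must fit together at once: applying background $\zfc$-collection so that the family ranges only over set-many $(e, u)$, applying choice to extract injectively presented $d_{e, u}$ from each existence claim, and indexing the resulting $C$ with pairwise distinct realizers so that $C$ itself lies in $\vip{\pcaa}$. Once the bookkeeping is arranged, the standard $V(\pcaa)$ realizer from \cite{mccarty} or \cite{rathjen06} works verbatim, and the theorem follows.
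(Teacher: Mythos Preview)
Your treatment of Extensionality, Pairing, Union, Bounded Separation, Infinity, $\in$-induction and Strong Collection matches the paper's essentially verbatim, including the use of background choice for Strong Collection.

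The gap is in Subset Collection. In the axiom
\[
(\exists c)(\forall u)\bigl((\forall x\in a)(\exists y\in b)\psi(x,y,u)\rightarrow(\exists d\in c)(\ldots)\bigr)
\]
the witness $c$ must be produced \emph{before} $u$ is given, and $u$ ranges over the proper class $\vip{\pcaa}$ (and even over all of $V(\pcaa)$ for the $\vdone$ clause). Your plan to choose $d_{e,u}$ for each pair $(e,u)$ and then ``use background collection to bound the class of relevant $u$ by a set'' cannot work: every $u$ is relevant, and no set of $u$'s suffices. So the family $\{d_{e,u}\}$ is indexed by a proper class and cannot be assembled into a set $C$.

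The paper avoids this by observing that the needed $d$ depends only on $A$, $B$, and the realizer $e$, not on $u$ or on the formula $\psi$. Given $A,B\in\vip{\pcaa}$, for each $e\in\pcaa$ such that $e.f\downarrow$ and $\langle e.f,b\rangle\in B$ for some $b$ whenever $\langle f,a\rangle\in A$, one sets
\[
\overline{e}:=\{\langle f,b\rangle : (\exists a)\,\langle f,a\rangle\in A,\ \langle e.f,b\rangle\in B\}.
\]
Because $B$ is injectively presented, the $b$ with $\langle e.f,b\rangle\in B$ is unique, so $\overline{e}$ is well defined and injectively presented; then $D:=\{\langle e,\overline{e}\rangle : \overline{e}\text{ is defined}\}$ is a set in $\vip{\pcaa}$. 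If $e\vdzero(\forall x\in A)(\exists y\in B)\phi(x,y,u)$, put $e':=(\lambda x).(ex)_0$; then $\langle e',\overline{e'}\rangle\in D$ and $\overline{e'}$ serves as the required $d$, uniformly in $u$. No choice is needed here (only for Strong Collection), and this is exactly where the injective-presentation hypothesis on $B$ does real work.
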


\paragraph{Extensionality}
This is the same as for $V(\pcaa)$.

\paragraph{Bounded Separation}
Given $A \in \vip{\pcaa}$ and a bounded formula, $\phi$, consider
the set
\[
S = \{ \langle \mathbf{p}e f, a \rangle \;|\; \langle e, a \rangle \in
A, f \vdzero \phi(a) \}
\]

Note that this is functional, since $A$ is, and since
realizability for bounded formulae is identical in $\vip{\pcaa}$ and
$V(\pcaa)$, we can see that this can be used to show the soundness of
bounded separation.

\paragraph{Pair}
Given $a, b \in V(\pcaa)$, consider
\[
P = \{ \langle \underline{0}, a \rangle, \langle \underline{1}, b \rangle \}
\]
This is clearly functional, and we can
easily use this to show the soundness of pair.

\paragraph{Strong Collection}
Suppose that 
\[
e \vdzero (\forall x \in A)(\exists y)\,\phi(x, y)
\]
For each $\langle f, a \rangle \in A$, we can assume by choice in the
background universe that we have chosen a $c_f \in \vip{\pcaa}$ such
that $e f \vdzero \phi(a, c_f)$ (and hence also $e f \vdone \phi(a,
c_f)$).

Let
\[
C = \{ \langle f, c_f \rangle \;|\; \langle f, a \rangle \in A \}
\]
This is clearly functional (since $A$ is).

Note that 
\[
\lambda x.\mathbf{p}  x (e x) \vdzero (\forall
x \in A)(\exists y \in C)\,\phi(x, y)
\]
and in fact we can use exactly the same realizer again in
\[
\lambda x.\mathbf{p}  x (e x) \vdzero (\forall
y \in C)(\exists x \in A)\,\phi(x, y)
\]
(since every element of $C$ is of the form $\langle f, c_f \rangle$
where $\langle f, x \rangle \in A$ and $e.f \vdzero \phi(x, c_f)$). So
we get soundness for strong collection.

\paragraph{Subset Collection}
Suppose we are given sets $A, B \in \vip{\pcaa}$. Suppose further that
$e \in \pcaa$ is such that for all $\langle f, a \rangle \in A$, $e f
\downarrow$ and there is $\langle e f, b \rangle \in B$ for some
$b$. In this case we can define 
\[
\overline{e} := \{ \langle f, b \rangle \; | \; \exists a \langle f, a
\rangle \in A, \langle e f, b \rangle \in B \}
\]
(Clearly $\overline{e} \in \vip{\pcaa}$).

Now let
\[
D := \{ \langle e, \overline{e} \rangle \;|\; e \in \pcaa,
\overline{e} \text{ is defined} \}
\]

Clearly $D \in \vip{\pcaa}$. We shall show that we can use $D$ to show
the soundness of subset collection.

Suppose that $u \in V(\pcaa)$ is such that
\[
e \vdone (\forall x \in A)(\exists y \in B)\,\phi(x, y, u)
\]
Let
\[
e' := \lambda x.(e x)_0
\]
Note that for every $\langle f, a \rangle \in A$, we have $e' f
\downarrow$ and there is (a unique) $b$ with $\langle e' f, b \rangle
\in B$, and so $\langle e', \overline{e'} \rangle \in D$. Furthermore
$(e f)_1 \vdone \phi(a, b, u)$, and so we can find a realizer for 
\[
(\forall x \in A)(\exists y \in \overline{e'})\,\phi(x, y, u) \;\wedge\;
(\forall y \in \overline{e'})(\exists x \in A)\,\phi(x, y, u)
\]
We can do exactly same if 
\[
e \vdzero (\forall x \in A)(\exists y \in B)\,\phi(x, y, u)
\]

Hence this does give a proof of the soundness of subset collection.

\paragraph{Union}
Suppose we have been given $A \in \vip{\pcaa}$. We want to find
an functional set that we can use to show the union axiom.
So let
\[
U = \{ \langle \mathbf{p} e f, c \rangle \;|\; \langle f, b \rangle
\in a, \langle e, c \rangle \in b \}
\]
Then we see that
\[
\lambda (x, y).(\mathbf{p} (\mathbf{p} y x) \mathbf{i}_r)
\vdzero (\forall x \in a)(\forall y \in x)\,y \in U
\]

\paragraph{Infinity}
We note that the $\overline{\omega}$ given in section \ref{vaint} is
functional, and since no other sets need to be constructed
in the proof of infinity, this means we can use the same proof as
usual here (see eg \cite{rathjen06}).

\paragraph{$\in$-Induction}
The same proof as for $\vg{\pcaa}$ still holds here.
\qed

\section{The Pca $\term$}
\label{sectionterm}

\subsection{Definition}
We will define a term model based on combinatory logic. This is
similar to the model $NT$ that appears in chapter 6 of
\cite{beeson85}.

We start by adding constants $\xi_i$ and $\zeta_F$ to the language of
combinatory logic.

\begin{definition}
\label{termdef}
The set, $\mathcal{C}$ of \emph{terms} is defined inductively as follows
\begin{enumerate}
\item constants $\pcas$ and $\pcak$ are terms
\item free variables $x_i$ for each $i \in \omega$ are terms
\item for each $i>0$, the constant $\xi_i$ is a term (we will call
  these \emph{atoms})
\item for each bijection $F:\omega_{>0} \rightarrow \omega_{>0}$ such
  that $F$ is the identity everywhere except for some finite set, the
  constant $\zeta_F$ is a term
\item if $s$ and $t$ are terms, then the ordered pair $\langle s, t
  \rangle$, written as $(s.t)$ or just $st$ is also a term
\end{enumerate}
\end{definition}

\begin{remark}
  Instead of using those $F$ that are the identity everywhere except
  for a finite set, the proofs in this paper will still hold using
  any subgroup of permutations of $\omega_{>0}$ that contains all
  transpositions. For example, we could alternatively let $F$ be any
  permutation of $\omega_{>0}$ or consider just the computable
  permutations.
\end{remark}

We will consider $\mathcal{C}$ as a term rewriting system. We have in
particular the two standard reduction rules

\begin{eqnarray*}
\pcas x y z & \rightarrow & x z (y z)  \\
\pcak x y & \rightarrow & x
\end{eqnarray*}

In addition to these, we add a new reduction rule. In the below let
$\underline{n}$ be $n$ encoded using $\pcas$ and $\pcak$ in the usual
way. Then we define the \emph{$\zeta$-rule}, or
\emph{$\zeta$-reduction} as follows:
\[
\zeta_F t  \rightarrow  \underline{n}
\]
where $t$ is a closed term and $n$ is either maximal such that $n =
F(m)$ where $\xi_m$ occurs in $t$ or $n = 0$ and no $\xi_m$ occurs in
$t$.

Note that this term rewriting system is ambiguous. That is, there are
terms that can be reduced in two incompatible ways. For example, the
term $\zeta_{\lambda x.x} (\pcak \pcak \xi_1)$ can reduce either to
$\underline{1}$ or to $\underline{0}$ depending on whether the subterm
$\pcak \pcak \xi_1$ is reduced before or after
$\zeta$-reduction. However, we still have a notion of normal form
(when no reduction rule can be applied to a term) and leftmost
innermost reduction, as defined below.

\begin{definition}
We define a sequence of partial operators, $\operatorname{RED}_n$ for
each $n$ as follows:

For $n = 0$, define $\operatorname{RED}_0$ as follows:

\begin{enumerate}
\item if $t$ is a normal form, $\lmimn{0}{t} = t$
\item for $t = \pcak r s$ where $r$ and $s$ are normal forms, 
  $\lmimn{0}{\pcak r s} = r$
\item for $t = \zeta_F r$ where $r$ is a normal form,
  $\lmimn{0}{\zeta_F r} = \underline{n}$ where $n$ is maximal such
  that $\xi_{F^{-1}(n)}$ occurs in $r$ or $0$ if no $\xi_i$ occurs in
  $r$
\end{enumerate}

If $\operatorname{RED}_{n}$ has been already been defined, then we
define $\operatorname{RED}_{n+1}$ as follows:

\begin{enumerate}
\item if $\lmimn{n}{t}\downarrow$, then $\lmimn{n+1}{t} =
  \lmimn{n}{t}$
\item for $t = \pcas r s u$, where $r$, $s$, and $u$ are normal forms,
  $\lmimn{n+1}{\pcas r s u} \simeq \lmimn{n}{\lmimn{n}{r u}
  \lmimn{n}{s u}}$
\item if $t = r s$ and neither of previous cases apply, then
  $\lmimn{n+1}{r s} \simeq \lmimn{n}{\lmimn{n}{r} \lmimn{n}{s}}$
\end{enumerate}

We then define $\operatorname{RED}$ as 
\[
\operatorname{RED} = \bigcup_{n \in \omega}\operatorname{RED}_n
\]

\end{definition}

Note that if $\lmim{t}$ is defined, then it is a normal form.

We now define our pca, $\term$

\begin{definition}
Let $\term$ be the set of closed normal forms of $\mathcal{C}$ together with
the following application:
\[
s.t := \lmim{s.t}
\]
(undefined if $\lmim{s.t}$ is undefined)
\end{definition}

Note that since this is a pca we can consider the notion of terms over
$\term$ (ie definition \ref{termoveradef}) as well as terms in the
sense of definition \ref{termdef}. Fortunately we are free to switch
between thinking of terms as elements of $\mathcal{C}$ and as terms
over $\term$ by the following proposition.  (Note that this
proposition is a characteristic of inside first reduction and is not
shared by some similar structures: see Remark 6.1.4 in
\cite{beeson85}.)

\begin{proposition}
\label{termdefsequiv}
Suppose that $t$ is a closed term over $\term$ (in the sense of
definition \ref{termoveradef}) and write $t^\ast $ for the
corresponding term (in the sense of definition \ref{termdef}). Then
$\lmim{t^\ast}$ is defined if and only if $t$ denotes, and in this
case we have
\[
\lmim{t^\ast } = t
\]
\end{proposition}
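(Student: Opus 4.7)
The plan is structural induction on the term $t$ over $\term$ from Definition \ref{termoveradef}. The base case is when $t = a$ for some $a \in \term$; here $a$ is already a normal form, so $\lmim{a} = a$ by the first clause of $\lmimn{0}$, and $t$ denotes $a$ by the first clause of Definition \ref{denotedef}, matching exactly.

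For the inductive step, suppose $t = (s'.s'')$, so $t^\ast = (s')^\ast (s'')^\ast$. By the second clause of Definition \ref{denotedef}, $t$ denotes $c$ iff there are $a', a'' \in \term$ with $s'$ denoting $a'$, $s''$ denoting $a''$, and $a'.a'' \simeq c$ in $\term$. The last condition unfolds, by the definition of $\term$'s application, as $\lmim{a' a''} = c$, where $a'$ and $a''$ are now viewed as combinatory terms. By the inductive hypothesis, $s'$ denotes $a'$ iff $\lmim{(s')^\ast} = a'$, and similarly for $s''$. Thus, to close the induction, it suffices to prove the following key lemma: for any closed combinatory terms $r, u$,
\[
\lmim{r u} \simeq \lmim{\lmim{r}\lmim{u}},
\]
with both sides defined simultaneously.

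I would prove the key lemma by unfolding the definition of $\lmimn{n}$, taking $n$ large enough that $\lmimn{n}{r}$ and $\lmimn{n}{u}$ have stabilised. In the generic situation where $ru$ is not a top-level $\pcas$-, $\pcak$- or $\zeta$-redex with all arguments already normal, the third clause of the definition of $\lmimn{n+1}$ computes $\lmim{ru}$ by first reducing $r$ and $u$ and then reducing their application, which is exactly the right-hand side. When $ru$ is a top-level $\pcas$-redex $\pcas r' s' u'$ with $r', s', u'$ already normal, the second clause applies; but in this case $\lmim{r} = r$ and $\lmim{u} = u$ already, since $\pcas r' s'$ is itself a normal form, so both sides agree trivially. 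The cases for $\pcak r' s'$ and $\zeta_F r'$ with normal arguments are handled by $\lmimn{0}$ in an analogous way.

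The hardest part will be to confirm that the definitional recursion really does compose as the key lemma requires, despite the ambiguity of the underlying rewrite system flagged after Definition \ref{termdef}. Concretely, one must check that reducing a subterm inside the argument of a $\zeta_F$ cannot introduce or remove atoms $\xi_m$ after a $\zeta_F$-step would have fired; this is exactly why the definition only allows the $\zeta_F$-clause to fire once its argument is a normal form, and the $n$-stratification of $\lmimn{n}$ is what enforces that discipline. Once this is verified at the level of $\lmimn{0}$ and propagated inductively through the $\lmimn{n+1}$ clauses, the key lemma goes through and the structural induction closes.
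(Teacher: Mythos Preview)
Your proposal is correct and follows the same route as the paper, which simply cites parts (i) and (ii) of Lemma~6.1.1 in chapter~6 of Beeson and notes that the argument survives the addition of $\zeta$-reduction. Your structural induction together with the key lemma $\lmim{r u} \simeq \lmim{\lmim{r}\,\lmim{u}}$ is exactly the content of that cited lemma, and your observation that the $\zeta$-clause only fires on normal arguments is precisely the point needed to see that the Beeson proof still goes through here; you have supplied the details the paper omits, but the underlying argument is the same.
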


\begin{proof}
  This essentially appears as parts (i) and (ii) of lemma 6.1.1 in
  chapter 6 of \cite{beeson85}. We simply note that the proof still
  holds in this setting where we also have $\zeta$-reduction.
\end{proof}

\begin{proposition}
$\term$ is a pca.
\end{proposition}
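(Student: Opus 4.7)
My approach is to verify each of the four pca axioms directly from the definition of $\lmim{}$ and the pca application $s \cdot t := \lmim{s.t}$ in $\term$. The key tool is Proposition~\ref{termdefsequiv}, which identifies the $\lmim{}$-value of a closed term in $\mathcal{C}$ with the pca-theoretic denotation of the corresponding term over $\term$, so that we can freely translate between the two sides.

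Axioms 1--3 are routine. Axiom 1 is immediate, since $\pcas$ and $\pcak$ are distinct constants in $\mathcal{C}$. For axiom 2, the term $\pcak . a$ is already a normal form (the $\pcak$-reduction requires two arguments), hence $\pcak \cdot a = \pcak . a \in \term$; then $(\pcak . a) \cdot b = \lmim{(\pcak . a) . b} = a$ by the second clause of $\lmimn{0}{}$, which applies directly because $a, b$ are normal forms. For axiom 3, both $\pcas . a$ and $(\pcas . a) . b$ are normal forms of $\mathcal{C}$ (since $\pcas$-reduction requires three arguments), so $\pcas \cdot a$ and $\pcas \cdot a \cdot b$ are defined in $\term$.

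The substantive case is axiom 4, $\pcas a b c \simeq a c (b c)$. By Proposition~\ref{termdefsequiv} it suffices to show $\lmim{\pcas . a . b . c} \simeq \lmim{(a . c) . (b . c)}$ as partial values in $\mathcal{C}$. Since $a, b, c$ are normal forms, the second clause of $\lmimn{n+1}{}$ applies directly to the left-hand side, giving $\lmimn{n+1}{\pcas . a . b . c} \simeq \lmimn{n}{\lmimn{n}{a . c} \, \lmimn{n}{b . c}}$; taking the union over $n$ yields $\lmim{\pcas . a . b . c} \simeq \lmim{\lmim{a . c} \, \lmim{b . c}}$. On the other side, a second application of Proposition~\ref{termdefsequiv} identifies $\lmim{(a . c) . (b . c)}$ with the pca denotation of the term $(a \cdot c) \cdot (b \cdot c)$ over $\term$, which is precisely the same partial value $\lmim{\lmim{a . c} \, \lmim{b . c}}$. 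Thus both sides of axiom 4 agree.

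The main subtlety is bookkeeping the partial definedness and the potential overlap between the clauses of $\lmimn{n+1}{}$: in special cases, the right-hand side term $(a . c) . (b . c)$ might also match the $\pcak$- or $\pcas$-cases (for instance, when $a$ is itself $\pcak$ or begins with $\pcas$ partially applied). The normal-form side conditions on each clause keep the definition unambiguous, and monotonicity of $\lmimn{n}{}$ in $n$ ensures that $\lmim{} = \bigcup_n \lmimn{n}{}$ is a well-defined partial function, so the required equalities of partial values go through as claimed.
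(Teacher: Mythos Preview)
Your proof is correct and follows essentially the same approach as the paper's: both verify the pca axioms directly from the definition of $\lmim{}$, noting that $\pcak a$, $\pcas a$, and $\pcas a b$ are already normal forms, that $\lmimn{0}{\pcak a b} = a$, and that the $\pcas$-clause of $\lmimn{n+1}{}$ gives $\lmim{\pcas a b c} \simeq \lmim{\lmim{ac}\,\lmim{bc}}$ (the paper phrases this last point as ``the left hand side is defined at stage $n+1$ if and only if the right hand side is defined at stage $n$''). Your invocation of Proposition~\ref{termdefsequiv} is a harmless detour---one can also unfold the pca application directly to reach $\lmim{\lmim{ac}\,\lmim{bc}}$ on the right-hand side without it---but the substance is the same.
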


\begin{proof}
Note firstly that $\pcas$ and $\pcak$ are normal terms and hence
elements of $\term$.

If $r$ and $s$ are normal forms, then so are $\pcak r$ and $\pcas r
s$. Hence $\pcak r \downarrow$, $\pcas r \downarrow$, and $\pcas r s
\downarrow$. Also $\lmim{\pcak r s} = r$, so $\pcak r s = r$. 

It remains only to check that for all $r, s, t$, $\pcas r s t \simeq r
t(st)$. However this is clear from the definition. (In fact the left
hand side is defined at stage $n + 1$ if and only if the right hand
side is defined at stage $n$.)
\end{proof}

\subsection{Preservation of Atoms}
\label{presatomssect}
The non trivial structure of $\vg{\term}$ will rely on the $\xi_i$,
and the rich supply of automorphisms arising from permutations of
them. We will want to ensure therefore that under suitable conditions
the atoms aren't eliminated by the realizability structure. In this
section, we will aim towards a lemma that will enable us to show
this.

\begin{definition}
For any pca, $\pcaa$, one may consider the following classes of
elements

\begin{enumerate}
\item $f \in \pcaa$ is \emph{type 1} if for every $n \in \omega$,
  $f \underline{n} \downarrow$, and there is some $m \in \omega$ such
  that $f \underline{n} = \underline{m}$
\item $e \in \pcaa$ is \emph{type 2} if for every type 1
  $f$, $e f \downarrow$ and $e f$ is type 1
\item $e \in \pcaa$ is a \emph{type 2 identity} if it is type 2
  and for all $f$ type 1 and for all $n \in \omega$, $ef
  \underline{n} = f \underline{n}$
\end{enumerate}
\end{definition}

We will now show that being able to decide whether a
term is defined or not is equivalent to the halting problem.

\begin{proposition}
\label{lambdatermprop}
  Suppose that $t(x) = t_1(x) t_2(x)$, $l \in \omega$, and $r$ is a
  normal form. If $\lmimn{l}{(\lambda x.t(x)) r } \downarrow$, then $l
  > 0$ and $\lmimn{l - 1}{t(r)} \downarrow$.
\end{proposition}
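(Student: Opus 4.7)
The plan is to unfold the standard combinatory $\lambda$-abstraction and then reduce to an auxiliary lemma proven by structural induction on the term being abstracted.

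First, by the inductive construction of combinatory $\lambda$-abstraction over an application, $(\lambda x).t(x) = \pcas a b$ where $a := (\lambda x).t_1(x)$ and $b := (\lambda x).t_2(x)$ are closed normal forms. Hence $(\lambda x).t(x) \cdot r = \pcas a b r$. The operator $\lmimn{0}$ handles only normal forms, two-argument $\pcak$-redexes, and one-argument $\zeta_F$-redexes; the term $\pcas a b r$ matches none of these patterns, so $\lmimn{0}{(\lambda x).t(x) \cdot r}$ is undefined, forcing $l > 0$.

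For the main claim I would prove the following auxiliary lemma by structural induction on $s(x)$: \emph{for any term $s(x)$, any normal form $r$, and any $n \geq 0$, if $\lmimn{n+1}{(\lambda x).s(x) \cdot r} \downarrow$ then $\lmimn{n}{s(r)} \downarrow$ with the same value.} The base cases $s(x) = x$ and $s(x) = c$ with $x \notin c$ are direct computations using $(\lambda x).x = \pcas \pcak \pcak$ and $(\lambda x).c = \pcak c$. In the inductive case $s = s_1 s_2$, writing $\alpha, \beta$ for the respective abstractions, the $\pcas$-clause of $\lmimn{n+1}$ unfolds the hypothesis into defined sub-computations $\lmimn{n}{\alpha r} = u'$, $\lmimn{n}{\beta r} = v'$, and $\lmimn{n}{u' v'}$; the inductive hypothesis applied at stage $n \geq 1$ yields $\lmimn{n-1}{s_i(r)} \downarrow$ with values $u', v'$, and whichever of the three clauses defining $\lmimn{n}{s_1(r) s_2(r)}$ is operative produces a defined value. (Case 3 directly returns $\lmimn{n-1}{u' v'}$; case 1 is immediate; case 2, triggered when $s_1(r) s_2(r)$ coincidentally has the shape $\pcas c d e$, requires a brief monotonicity argument across stages.) The edge case $n = 0$ forces $s_1, s_2$ to be non-$x$ variables or constants, so $s(r) = s$ and $\lmimn{0}{s(r)} = \lmimn{0}{u' v'}$ directly. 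The proposition then follows by instantiating the lemma with $s = t$ and $n + 1 = l$.

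The main obstacle is the case analysis in the inductive step of the auxiliary lemma, especially verifying definedness in the sub-case where $s_1(r) s_2(r)$ itself has the form $\pcas c d e$: here one extracts defined sub-reductions $\lmimn{n-2}{c e}$ and $\lmimn{n-2}{d e}$ from the defined value $\lmimn{n-1}{\pcas c d e}$ via its own $\pcas$-clause, and then uses monotonicity of $\lmimn{k}$ across stages to reconstruct the required sub-terms at stage $n-1$.
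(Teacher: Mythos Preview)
Your proposal is correct and begins exactly as the paper does: unfold $(\lambda x).t(x)$ as $\pcas\,((\lambda x).t_1)\,((\lambda x).t_2)$, observe that the $\pcas$-redex forces $l>0$, and apply the $\pcas$-clause of $\lmimn{l}{\cdot}$. The paper then simply asserts the equality
\[
\lmimn{l-1}{\,\lmimn{l-1}{(\lambda x).t_1(x)\,r}\;\lmimn{l-1}{(\lambda x).t_2(x)\,r}\,}=\lmimn{l-1}{t_1(r)\,t_2(r)}
\]
without further argument; your auxiliary lemma, proved by structural induction on $s(x)$, is exactly what is needed to justify that step and makes explicit the stage bookkeeping (including the awkward $\pcas c d e$ sub-case) that the paper leaves implicit. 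So the two arguments are the same in outline, with yours supplying the inductive detail the paper elides.
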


\begin{proof}
Note that from the definition of lambda terms over a pca (see
\cite{beeson85} or \cite{vanoosten}) we know that 
\[
\lambda x.t(x) := \pcas (\lambda x.t_1(x)) (\lambda x.t_2(x))
\]

Note firstly that $(\lambda x.t(x)) r = \pcas (\lambda x.t_1(x))
(\lambda x.t_2(x)) r$ and hence we can only have $\lmimn{l}{(\lambda
  x.t(x)) r} \downarrow$ for $l > 0$. Furthermore,
\[
  \lmimn{l}{\pcas (\lambda x.t_1(x)) (\lambda x.t_2(x)) r}
 \simeq  \lmimn{l - 1}{\lmimn{l - 1}{(\lambda x.t_1(x)) r} \lmimn{l
      - 1}{(\lambda x.t_2(x)) r}}
\]

Since we are assuming that $\lmimn{l}{(\lambda x.t(x)) r } \downarrow$,
we know in particular that $\lmimn{l - 1}{(\lambda x.t_1(x))
  r}\downarrow$ and $\lmimn{l - 1}{(\lambda x.t_2(x)) r}\downarrow$,
and hence
\begin{eqnarray*}
\lmimn{l - 1}{\lmimn{l - 1}{(\lambda x.t_1(x)) r} \lmimn{l
      - 1}{(\lambda x.t_2(x)) r}} 
&=& \lmimn{l - 1}{t_1(r) t_2(r)} \\
& = & \lmimn{l - 1}{t(r)}
\end{eqnarray*}
and in particular $\lmimn{l - 1}{t(r)} \downarrow$.
\end{proof}

\begin{proposition}
\label{tmprime}
For any $m,n \in \omega$, there is a closed normal form $t_m$ and a
normal form $t_{n,m}'(x)$ with free variable $x$ such that for all $r \in \term$ 
\begin{enumerate}
\item $\lmim{t_m r} \downarrow$ if and only if the $m$th Turing
  machine halts on input $m$, and if this occurs $\lmim{t_m r}
  = I$ ($I := \pcas \pcak \pcak$)
\item $\lmim{t_{n,m}'(\zeta_F) r} \downarrow$ if and only if the $m$th Turing
  machine halts on input $m$, and if this occurs $\lmim{t_m(\zeta_F) r}
  = \underline{F(n)}$
\item $t_m$ contains no $\xi_i$ and $t_{n,m}'$ contains $\xi_i$ for
  $i = n$ only
\end{enumerate}
\end{proposition}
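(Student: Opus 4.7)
The plan is to exploit the representability of partial recursive functions in any pca to produce an element $h \in \term$, built using only $\pcas$ and $\pcak$, such that $h \cdot \underline{m} \downarrow$ if and only if the $m$-th Turing machine halts on input $m$. The existence of such $h$ is standard (see Chapter 6 of \cite{beeson85} or Chapter 1 of \cite{vanoosten}); because $h$ uses no atoms and no $\zeta$-constants, no $\xi_i$ occurs in it. I will then wrap $h$ in combinator skeletons that are guaranteed to be in normal form and that, upon application to an arbitrary $r$, absorb $r$ and expose the halting computation.

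For part (1), I would set
\[
t_m := \pcas (\pcak (\pcak I)) (\pcas (\pcak h) (\pcak \underline{m})).
\]
This is a closed normal form, since every occurrence of $\pcas$ appears with only two arguments and every $\pcak$ with only one, so no reduction rule can fire at any subterm. Applying $t_m$ to $r$ produces a three-argument $\pcas$-redex. Tracing leftmost-innermost reduction via the definition of $\operatorname{RED}$: the left inner reduction $\lmim{(\pcak (\pcak I)) r}$ is a direct $\pcak$-redex equal to $\pcak I$, while the right inner reduction is itself a three-argument $\pcas$-redex whose two inner $\pcak$-reductions absorb $r$ and leave $\lmim{h \underline{m}}$. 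Combining these, $\lmim{t_m r} \simeq \lmim{\pcak I \cdot \lmim{h \underline{m}}}$, which converges precisely when $\lmim{h \underline{m}}$ does, and in that case equals $I$ by one further $\pcak$-reduction. Clause (3) for $t_m$ is automatic since $h$ and $\underline{m}$ contain no atoms.

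For part (2), mirror the construction, replacing $I$ by $\xi_n$ and adding an outer $\zeta_F$ wrapper:
\[
G := \pcas (\pcak (\pcak \xi_n)) (\pcas (\pcak h) (\pcak \underline{m})), \qquad t_m'(x) := \pcas (\pcak x) G.
\]
The same arity count shows both are normal forms, and $\xi_n$ is manifestly the only atom occurring in $t_m'$. Substituting $x := \zeta_F$ and applying to $r$, the outer $\pcas$-redex reduces to $\lmim{\zeta_F \cdot \lmim{G r}}$, and the trace for $G r$ is identical to that for $t_m r$ except that the final $\pcak$-reduction now yields $\xi_n$; hence $\lmim{G r} = \xi_n$ exactly when $\lmim{h \underline{m}}$ is defined. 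In that case $\zeta_F \cdot \xi_n$ is a $\zeta$-redex whose unique occurring atom is $\xi_n$, so by the $\zeta$-rule it reduces to $\underline{F(n)}$. If the $m$-th Turing machine diverges, $\lmim{h \underline{m}}$ is undefined and hence so is $\lmim{t_m'(\zeta_F) r}$.

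The main obstacle is almost entirely bookkeeping. One must verify at each step that the constructed terms are genuinely in normal form (by an arity count on $\pcas$ and $\pcak$) and trace the leftmost-innermost reduction carefully through the inductive definition of $\operatorname{RED}_n$, appealing to proposition \ref{termdefsequiv} to move freely between combinatory terms and terms over the pca. The key design trick is the nested $\pcas (\pcak (-)) (\pcak (-))$ pattern, which lets the outer wrapper absorb the arbitrary argument $r$ via two $\pcak$-reductions on both branches, so that the final value reflects only the halting behaviour of $h \underline{m}$ and is independent of $r$.
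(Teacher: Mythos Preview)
Your proposal is correct. Both you and the paper build $t_m$ by wrapping a halting-detector in a combinator shell that absorbs the incoming $r$; the wrappers differ only cosmetically. The substantive difference is in how the halting-detector is obtained. You take as a black box an element $h$, built from $\pcas$ and $\pcak$ alone, with $\lmim{h\,\underline{m}}\downarrow$ iff the $m$th machine halts on $m$, citing standard representability of partial recursive functions in term-model pcas. The paper instead constructs the detector explicitly: it represents the (total) step predicate $u_m$, folds it with a hand-rolled fixed point $v := w w$, and then verifies the divergence direction by an induction on the levels $\operatorname{RED}_l$, using the preceding Proposition~\ref{lambdatermprop} on $\lambda$-terms. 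Your route is shorter and makes clear the result is essentially generic for such term models; the paper's route is fully self-contained for $\term$ and does not ask the reader to check that the cited references really deliver faithful divergence under this particular leftmost-innermost strategy. For part~(2), both arguments finish by feeding $\xi_n$ to $\zeta_F$ once convergence is secured: the paper uses $t_m'(x) = \pcas(\pcas t_m(\pcak x))(\pcak \xi_n)$ so that $t_m'(\zeta_F)r \simeq I\,\zeta_F\,\xi_n$, while your $t_m'(x) = \pcas(\pcak x)G$ gives $t_m'(\zeta_F)r \simeq \zeta_F\,(G r) \simeq \zeta_F\,\xi_n$; both reduce to $\underline{F(n)}$ by a single $\zeta$-step.
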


\begin{proof}
By representability of computable functions in pcas (see eg
\cite{vanoosten} or \cite{beeson85}), one can construct
$u_m$ such that for every $k \in \omega$,
\[
u_m \underline{k} = 
\begin{cases}
\pcak I & \text{if the } m^{\text{th}} \text{ Turing machine halts
  by stage } k \text{ on input } m \\ (\lambda z.z \underline{k + 1}) & \text{if the }
m^{\text{th}} \text{ Turing machine does not halt by stage } k \text{
  on input }m \\
\end{cases}
\]

Then, following the construction in the fixed point theorem, define
\begin{eqnarray*}
w & := & \lambda x.(\lambda y. u_m y (x x)) \\
v & := & w w \\
& = & \lambda y.u_m y (w w)
\end{eqnarray*}

Then if the $m$th Turing machine halts at stage $k$ on input $m$, 
\begin{eqnarray*}
v \underline{0} & \simeq & u_m \underline{0} (w w)\\
& \simeq & u_m \underline{0} v \\
& \simeq & v \underline{1} \\
& \vdots & \\
& \simeq & v \underline{k} \\
& \simeq & u_m \underline{k} (w w) \\
& \simeq & (\pcak I) (w w) \\
& \simeq & I
\end{eqnarray*}
In particular $v \underline{0} \downarrow$.

Now suppose that the $m$th Turing machine never halts on input $m$. We
show by induction on $l$ that for all $l \in \omega$ and for all $k
\in \omega$,
\[
\lmimn{l}{v \underline{k}} \uparrow
\]

Assume that for all $k \in \omega$ and for all $l' < l$ the statement
$\lmimn{l'}{v \underline{k}} \uparrow$ holds and assume for a
contradiction that $\lmimn{l}{v \underline{k}} \downarrow$. Note that
\[
\lmimn{l}{v \underline{k}} = \lmimn{l}{((\lambda y.u_m y) (w
  w)) \underline{k}}
\]
and so by proposition \ref{lambdatermprop}, we know in particular that 
$\lmimn{l - 1}{u_m \underline{k} (w w)}\downarrow$. But in this case
\begin{eqnarray*}
\lmimn{l - 1}{u_m  \underline{k} (w w)} &=&
\lmimn{l - 2}{\lmimn{l - 2}{u_m \underline{k}} \lmimn{l - 2}{w w}} \\
& = & \lmimn{l - 2}{(\lambda z.z \underline{k + 1})
 v}  \\
& = & \lmimn{l - 3}{v \underline{k + 1}}
\end{eqnarray*}
and so in particular $\lmimn{l - 3}{v \underline{k + 1}} \downarrow$
giving a contradiction as required.

Finally, let $t_m = \pcas (\pcak v) (\pcak \underline{0})$. Then, for
all $r$, $t_m r \simeq v \underline{0}$, by the basic properties of
$\pcas$ and $\pcak$.

For parts 2 and 3, let $t_m$ be as above and let $t_{n,m}'(x) = \pcas
(\pcas t_m (\pcak x)) (\pcak \xi_n)$. Note that part 2 follows
from the basic properties of $\pcas$ and $\pcak$ and that part 3 is
clear from the definitions of $t_m$ and $t_{n,m}'(x)$.
\end{proof}

\begin{lemma}[Preservation of Atoms]
\label{presatoms}
Let $e$ be a type 2 identity in $\term$. Then for any $n$, there
is some type 1 $f$ in $\term$ such that $\lmim{e.f}$ contains the
atom $\xi_n$ as a subterm and furthermore, $f$ only contains $\xi_i$ such
that $i = n$.
\end{lemma}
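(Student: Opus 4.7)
The plan is to argue by contradiction using the undecidability of the halting problem, which is available through Proposition \ref{tmprime}. Fix once and for all a permutation $F$ of $\omega_{>0}$ (the identity will do). Suppose, for contradiction, that there is some type 2 identity $e$ and some $n$ such that no type 1 $f \in \term$ satisfies the conclusion of the lemma, i.e.\ for every type 1 $f$, either $\xi_n$ does not appear as a subterm of $\lmim{e.f}$, or some $\xi_i$ with $i \ne n$ does. From this assumption I will manufacture a decision procedure for the halting problem.

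The heart of the construction is, for each $m \in \omega$, a closed type 1 term $f_m \in \term$ built from the terms $t_m$ and $t_m'$ of Proposition \ref{tmprime} together with the fixed-point theorem. The requirements on $f_m$ are twofold: (i) $f_m$ must be type 1 unconditionally, i.e.\ for every $k$ there is some numeral $\underline{\ell}$ with $f_m \cdot \underline{k} = \underline{\ell}$, regardless of whether the $m$-th Turing machine halts on input $m$; and (ii) the occurrence of $\xi_n$ in $\lmim{e \cdot f_m}$ must track the halting behavior of that machine. The rough idea is to place $\xi_n$ inside $f_m$ in a $\pcak$-protected position that is ordinarily preserved by reduction, and to use the $u_m$-style self-referential term from the proof of Proposition \ref{tmprime} to arrange that the halting computation, when (and only when) it converges, triggers a cascade of reductions that either reveals or destroys $\xi_n$ as a subterm of $\lmim{e \cdot f_m}$.

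Once $f_m$ is in hand, the contradiction is immediate. Since $e$ is a type 2 identity and each $f_m$ is type 1 in $\term$, the normal form $\lmim{e \cdot f_m}$ is defined and computable uniformly in $m$. Checking whether $\xi_n$ occurs as a subterm of a given normal form is decidable by inspection, and checking whether any $\xi_i$ with $i \ne n$ occurs is likewise decidable. Hence, under the assumption that the lemma fails, the map $m \mapsto [\xi_n \in \lmim{e \cdot f_m}]$ is a computable predicate that decides the halting of the $m$-th Turing machine on input $m$, contradicting undecidability.

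The main technical obstacle is step (i), ensuring $f_m$ is always type 1. The most natural terms built from $t_m$ or $t_m'(\zeta_F)$ are only defined when the $m$-th machine halts, so a straightforward construction via $\lambda$-abstraction collapses when it does not; this is the same issue flagged in Remark 6.1.4 of \cite{beeson85}. To circumvent it I would combine $t_m$ with a harmless fallback so that numeric applications always reduce to $\underline{0}$, while the $\xi_n$ remains encapsulated in a dormant subterm whose fate under the reduction of $e \cdot f_m$ is what encodes the halting information. Proposition \ref{lambdatermprop} gives the needed stratified analysis of leftmost-innermost reduction to verify that both $f_m$ and $e \cdot f_m$ do indeed terminate to normal forms in $\term$, and to track exactly which reduction steps are (or are not) forced before stabilization — in particular to confirm that the presence of $\xi_n$ in the final normal form is determined by whether the halting computation inside $f_m$ actually fires.
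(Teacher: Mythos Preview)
Your overall shape matches the paper's (argue by contradiction, build type~1 terms $f_m$ uniformly in $m$, solve the halting problem), but the proposed decision procedure does not work, and fixing $F$ once and for all is exactly where it goes wrong. You have no control over what $e$ does to $f_m$; the \emph{only} leverage on $e$ is that $e\cdot f$ has the same graph as $f$. Nothing in your sketch explains how the convergence of a computation buried inside $f_m$ could force $\xi_n$ to survive in $\lmim{e\cdot f_m}$ against an adversarial $e$ that, by hypothesis, always succeeds in scrubbing $\xi_n$ out. Concretely: if every $f_m$ mentions no atom other than $\xi_n$ (which it must, for the failure hypothesis to bite), then that hypothesis already gives $\xi_n\notin\lmim{e\cdot f_m}$ for \emph{every} $m$, so your predicate $m\mapsto[\xi_n\in\lmim{e\cdot f_m}]$ is constantly false and decides nothing.

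The paper's proof exploits the type~2~identity property through the parameter $F$, which is \emph{not} fixed. One builds $f_m(\zeta_F)$ uniformly in $F$ so that its graph depends on $F(n)$ precisely when machine~$m$ halts on~$m$ (it is identically~$\underline{0}$ otherwise). The decision procedure is then: simulate the terminating reduction of $e\cdot f_m(\zeta_F)$ and record whether the block $t_m'(\zeta_F)$ is ever applied to an argument. If so, machine~$m$ halts by Proposition~\ref{tmprime}. If not, the failure hypothesis excludes $\xi_n$ from the final normal form, hence excludes the whole normal-form block $t_m'(\zeta_F)$, hence excludes $\zeta_F$ itself; since $\zeta_F$ was also never fired as a $\zeta$-rule, the entire reduction is syntactically unchanged upon replacing $\zeta_F$ by $\zeta_{F'}$ with $F'(n)\neq F(n)$, giving $\lmim{e\cdot f_m(\zeta_F)}=\lmim{e\cdot f_m(\zeta_{F'})}$. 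As $e$ is a type~2 identity, $f_m(\zeta_F)$ and $f_m(\zeta_{F'})$ then have the same graph, which forces machine~$m$ not to halt. Varying $F$ is the missing ingredient that turns the graph-preservation of $e$ into usable information.
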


\begin{proof}
  Assume that the lemma does not hold. Then there is some $n$ such
  that whenever $f$ is type 1 and contains $\xi_i$ only for $i = n$,
  we have that $\lmim{e.f}$ does not contain $\xi_n$. (That is, $e$
  ``strips away the $\xi_n$''). We will use this to derive a
  contradiction and conclude that the lemma holds.

We will define a (computable) family $f_m(x)$ of normal forms with one
free variable such that for each $F$, $f_m(\zeta_F)$ is type 1 in
$\term$.

Let $g_m \in \term$ be such that for all $l \in \omega$, $g_m
\underline{l} = \pcak (\pcak \underline{0})$ if the $m$th Turing machine
with input $m$ has not halted by stage $l$ and $g_m \underline{l} = I$ if
the $m$th Turing machine has halted by stage $l$. We can do this using
the representability of primitive recursive functions in pcas.

Then let $t'_{n,m}(x)$ be as in proposition \ref{tmprime}. Define
\[
f_m(x) := \pcas (\pcas g_m (\pcak t_{n,m}'(x))) I
\]
Note that this is in normal form and that we may assume it contains
$\xi_i$ only for $i = n$. If the $m$th Turing machine
has not halted by stage $l$ then for any $\zeta_F$
\begin{eqnarray*}
\lmim{f_m(\zeta_F) \underline{l}} & \simeq & \lmim{\lmim{(\pcas g_m (\pcak
    t_{n,m}'(\zeta_F))) \underline{l}} \underline{l}} \\
& \simeq & \lmim{\lmim{\lmim{g_m \underline{l}} t_{n,m}'(\zeta_F)} \underline{l}} \\
& \simeq & \lmim{\lmim{\pcak (\pcak \underline{0}) t_{n,m}'(\zeta_F)} \underline{l}}
  \\
& \simeq & \lmim{\pcak \underline{0} \underline{l}} \\
& \simeq & \underline{0}
\end{eqnarray*}
In particular, see that $f_m(\zeta_F) \underline{l} \downarrow$ even if
the $m$th Turing machine never halts on input $m$. If the $m$th Turing
machine on input $m$ has halted by stage $l$, then 
\begin{eqnarray*}
\lmim{f_m(\zeta_F) \underline{l}} & \simeq & \lmim{\lmim{(\pcas g_m (\pcak
    t_{n,m}'(\zeta_F))) \underline{l}} \underline{l}} \\
& \simeq & \lmim{\lmim{\lmim{g_m \underline{l}} t_{n,m}'(\zeta_F)} \underline{l}} \\
& \simeq & \lmim{\lmim{I t_{n,m}'(\zeta_F)} \underline{l}} \\
& \simeq & \lmim{t_{n,m}'(\zeta_F) \underline{l}} \\
& \simeq & \underline{F(n)}
\end{eqnarray*}

Hence for any $m \in \omega$ and any $\zeta_F$, $f_m(\zeta_F)$ is
type 1 in the sense we defined earlier. 

We therefore know that $e.f_m(\zeta_F) \simeq \lmim{e.f_m(\zeta_F)}
\downarrow$ and by hypothesis $\lmim{e.f_m(\zeta_F)}$ cannot contain
$\xi_n$. For convenience, in the below we will assume that $F$ is
chosen such that $\zeta_F$ does not occur anywhere in $e$.

Note that we can carry out an algorithm to find
$\lmim{e.f_m(\zeta_F)}$ from $m$. (Since each $F$ is the identity
everywhere except for some finite set, we can easily construct a
suitable G\"{o}del numbering for $\term$).

Furthermore, note that when we carry out this algorithm we can check
whether or not we ever need to evaluate $\lmim{t_{n,m}'(\zeta_F) r}$ for
some $r$. If we did need to evaluate this, then in particular
$\lmim{t_{n,m}'(\zeta_F) r} \downarrow$ and so the $m$th Turing machine
must halt on input $m$. On the other hand, if we did not need to
evaluate $\lmim{t_{n,m}'(\zeta_F) r}$, then $\zeta_F$ was never used in the
$\zeta$-rule because it only ever occurs as a subterm of the normal
form $t_{n,m}'(\zeta_F)$. Furthermore, by hypothesis $t_{n,m}'(\zeta_F)$ cannot
occur as a subterm of $\lmim{e.f_m(\zeta_F)}$, because otherwise
$e.f_m(\zeta_F)$ would contain $\xi_n$.

Hence if we choose $F'$ such that $F'(n) \neq F(n)$ then 
\[
\lmim{e.f_m(\zeta_F)} = \lmim{e.f_m(\zeta_{F'})}
\]
But note that this means $f_m(\zeta_F)$ and $f_m(\zeta_{F'})$ must
have the same value on $\underline{l}$ for every $l \in \omega$. This
can only happen if they are both identically zero and hence the $m$th
Turing machine does not halt on input $m$.

Therefore we could use such an algorithm to solve the halting problem
and we derive our contradiction.
\end{proof}

\subsection{Automorphisms of $\term$}
\label{automterm}
Suppose that $\pi : \omega_{>0} \rightarrow \omega_{>0}$ is a
permutation that is the identity everywhere except on some finite
set. Then $\pi$ induces an automorphism $\alpha: \term \rightarrow
\term$ as follows.

\begin{enumerate}
\item $\alpha(\xi_n) = \xi_{\pi(n)}$
\item $\alpha(\zeta_F) = \zeta_{F \circ \pi^{-1}}$
\item $\alpha(\pcas) = \pcas$
\item $\alpha(\pcak) = \pcak$
\item $\alpha(s.t) = \alpha(s).\alpha(t)$
\end{enumerate}

Note that we have chosen the action of $\alpha$ on the $\zeta_F$ so
that it is compatible with the $\zeta$-rule and the action of $\alpha$
on the $\xi_n$. $\alpha$ is clearly therefore an automorphism of
$\term$.

Note also that these automorphisms form a subgroup of the group of
automorphisms of $\term$.

\section{A Useful Lemma}
Before we move onto the proof itself, we prove a lemma that is true in
general for any pca $\pcaa$. Informally, what this says is the
property of being functional can be inherited ``up to
realizability'' across sets that are realizably equal.

\begin{lemma}
\label{iplemma}
If $a, b \in V(\pcaa)$ with $a$ functional and $V(\pcaa) \models a =
b$, then there is $e \in \pcaa$ such that whenever $\langle f, c \rangle$ and
$\langle f, c' \rangle$ are both elements of $b$,
\[
e f \Vdash c = c'
\]
\end{lemma}

\begin{proof}
Suppose that $V(\pcaa) \models a = b$ and $a$ is functional. Then there is some $e' \in \pcaa$ such that 
\[
e' \Vdash a = b
\]

Given $\langle f, c \rangle$ and $\langle f, c' \rangle$ in $b$, we
know from the definition of realizability for equality that there must be
$\langle ((e')_1 f)_0, d \rangle, \langle ((e')_1 f)_0, d' \rangle \in
a$ such that
\begin{eqnarray*}
((e')_1 f)_1 & \Vdash & c = d \\
((e')_1 f)_1 & \Vdash & c' = d'
\end{eqnarray*}

Since $a$ is functional, we know in fact that $d = d'$ and
so 
\[
\mathbf{i}_t ((e')_1 f)_1 (\mathbf{i}_s ((e')_1 f)_1) \Vdash c = c'
\]
Hence we can take
\[
e := \lambda x.\mathbf{i}_t ((e')_1 x)_1 (\mathbf{i}_s ((e')_1
x)_1)
\]
\end{proof}

\section{Failure of the Existence Property}
We will show that the existence property fails for $\czf$ in the
following instance.

\begin{theorem}
\label{noexistenceinstance}
There is no formula with one free variable $\chi(x)$ such that 
\[
\czf \vdash (\exists ! x)\,\chi(x)
\]
and 
\[
\czf \vdash \chi(x) \;\rightarrow\; x \subseteq
\mv{\mathbb{N}^{\mathbb{N}}}{\mathbb{N}} \wedge
(\forall R \in
\mv{\mathbb{N}^{\mathbb{N}}}{\mathbb{N}})(\exists S \in x)\,S \subseteq
R
\]
\end{theorem}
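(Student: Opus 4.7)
I proceed by contradiction, supposing such a $\chi$ exists. The plan is to derive a contradiction over the pca $\term$ of section \ref{sectionterm}, equipped with a suitable normal filter $\Gamma$ on the automorphism group $G$ of $\term$ built from subgroups fixing finite sets of atoms $\xi_i$.

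Both $\vip{\term}$ and $\vg{\term}$ are sound models of $\czf$, so each yields a realized witness: sets $\cip \in \vip{\term}$ and $\cg \in \vg{\term}$ together with realizers for $\chi(\cip)$ and $\chi(\cg)$. By proposition \ref{realpreserve} (and its analogue for $\vip{\term}$, which is immediate from the definition), these realizers remain witnesses for $\chi$ in $V(\term)$, with $\cip$ and $(\cg)^\circ$ as the respective sets. Because $\izf$ proves $(\exists ! x)\chi(x)$ and $V(\term)$ is sound for $\izf$, uniqueness gives $V(\term) \models \cip = (\cg)^\circ$.

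Next I invoke Lemma \ref{iplemma}. Since $\cip$ is injectively presented, any two elements $\langle g, c\rangle, \langle g, c'\rangle \in (\cg)^\circ$ sharing a realizer index satisfy $V(\term) \models c = c'$, uniformly in $g$. But $\cg$ must have $\stab{G}{\cg} \in \Gamma$, so for each $\langle 0, g, S\rangle \in \cg$ and each $\alpha$ in the stabiliser fixing $g$ (for instance when $g$ is built purely from $\pcas$ and $\pcak$), the element $\langle 0, g, \alpha(S)\rangle$ also belongs to $\cg$. Hence $\langle g, S^\circ\rangle$ and $\langle g, \alpha(S)^\circ\rangle = \langle g, \alpha(S^\circ)\rangle$ (using proposition \ref{sympreserved}) both lie in $(\cg)^\circ$, forcing $V(\term) \models S^\circ = \alpha(S^\circ)$ for any such $\alpha$.

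The main obstacle is to exhibit a concrete $S \in \cg$ and an $\alpha \in G$ for which this forced realized equality actually fails. My plan is to construct, inside $\vg{\term}$, a specific $R \in \mv{\mathbb{N}^{\mathbb{N}}}{\mathbb{N}}$ whose defining realizer involves a distinguished atom $\xi_n$, pass $R$ through the assumed $\chi$ to obtain a selector $S \subseteq R$ with $S \in \cg$, and use Lemma \ref{presatoms} (Preservation of Atoms) to argue that the type 2 identity governing $S$ must carry the atom $\xi_n$ into the set-theoretic data of $S^\circ$ in an essential way, producing an element $\langle \bar f, \overline{m}\rangle$ of $S^\circ$ whose realizer for Baire-space membership depends on $\xi_n$. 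A permutation $\alpha$ swapping $\xi_n$ with a fresh atom then transforms this element into one not realizably equal to any element of $S^\circ$, contradicting $V(\term) \models S^\circ = \alpha(S^\circ)$. Choosing $\Gamma$, $R$, and $\alpha$ so that the symmetry argument applies to this particular $S$ while the atom-dependency survives is the technical crux of the argument.
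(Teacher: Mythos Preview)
Your overall setup is correct and matches the paper: the two models $\vip{\term}$ and $\vg{\term}$, passage to $V(\term)$ via realizability preservation, uniqueness giving $\cip = (\cg)^\circ$, and Lemma \ref{iplemma} forcing elements of $(\cg)^\circ$ that share an index to be realizably equal. Lemma \ref{presatoms} is indeed the engine of the final contradiction.

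The gap is at your ``technical crux,'' and it is not merely technical. You propose a single $R$ whose \emph{realizer} carries a distinguished atom $\xi_n$, hoping this dependence propagates into $S^\circ$. This is backwards: if the witness for $R \in \mvnnn$ contains $\xi_n$, then so does the index $g$ of the selector (it is obtained by applying the fixed fullness realizer to that witness), and then your swap $\alpha$ of $\xi_n$ no longer fixes $g$, so your own invariance step collapses. Moreover, Lemma \ref{presatoms} is not about atoms in the realizer of $R$; it says a type 2 identity cannot erase atoms arriving through its type-1 \emph{arguments}.

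What the paper does instead is build a whole family $R_N$ ($N \in \omega$), all witnessed in $\mvnnn$ by the \emph{same} atom-free realizer $e_1$: each $R_N$ assigns to a type-1 $f$ the value $\zeta_1 f$ when that is $\leq N$, and is freely multivalued above $N$. Uniformity of $e_1$ forces the selectors $c_N \in \cg$ to share a single index $f$, so Lemma \ref{iplemma} makes all the $c_N^\circ$ realizably equal in $V(\term)$. One now \emph{first} bounds the (literal) support of $c_0^\circ$ and the atoms occurring in the relevant realizers by some $N$, and \emph{then} takes $N' = N+2$: the set $c_0^\circ$, literally fixed by the swap of $\xi_{N+1}$ and $\xi_{N+2}$, is nevertheless realizably contained in $R_{N'}^\circ$. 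Feeding in a type-1 $g$ containing only $\xi_{N+1}$ (supplied by Lemma \ref{presatoms}) and comparing with its image under the swap forces two distinct numerals to be realizably equal (Lemma \ref{rnlemma}). Without the family there is no mechanism to arrange a selector whose support is strictly below the parameter governing the $R$ it refines, and that ordering of choices is exactly the missing idea in your plan.
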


This will immediately give the following corollary.

\begin{corollary}
  $\czf$ does not have wEP.
\end{corollary}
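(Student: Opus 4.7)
The plan is to assume for contradiction that such a $\chi(x)$ exists and specialize to the pca $\term$ from Section~\ref{sectionterm}, equipped with the normal filter $\Gamma$ on its automorphism group $G$ (from Section~\ref{automterm}) consisting of the subgroups which contain $\stab{G}{E}$ for some finite set of atoms $E \subseteq \{\xi_i : i > 0\}$; one checks immediately that this is a normal filter. Applying the soundness theorems for $\czf$ in $\vip{\term}$ and $\vg{\term}$ to $\czf \vdash (\exists! x)\chi(x)$ produces witnesses $a_{\mathrm{ip}} \in \vip{\term}$ and $a_\Gamma \in \vg{\term}$ realizing $\chi$ in their respective models. Under the inclusion $\vip{\term} \subseteq V(\term)$ and the map $a \mapsto a^\circ \colon \vg{\term} \to V(\term)$, Proposition~\ref{realpreserve} and the analogous remark for $\vip{\term}$ (Remark~\ref{boundedsame}) promote both $a_{\mathrm{ip}}$ and $a_\Gamma^\circ$ to realizers of $\chi$ in $V(\term)$. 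The uniqueness clause then yields some $h \in \term$ with $h \Vdash a_{\mathrm{ip}} = a_\Gamma^\circ$ in $V(\term)$.

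Next I would deploy the basis clause $(\forall R \in \mvnnn)(\exists S \in x)\,S \subseteq R$. For each type-2 identity $e \in \term$ I would construct, inside $\vg{\term}$, a multivalued relation $R_e \in \mvnnn$ whose internal encoding is \emph{completely} symmetric — so that $R_e^\circ$ is the corresponding natural encoding inside $V(\term)$ — yet whose allowed outputs on each input $\phi$ are computed through $e$ and $\zeta$-reduction and so depend essentially on the atoms. The realizer witnessing $(\exists S \in a_\Gamma)\,S \subseteq R_e$ in $\vg{\term}$ then selects a concrete tag $g_e \in \term$ together with some $S_e$ satisfying $\langle 0, g_e, S_e\rangle \in a_\Gamma$. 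Any automorphism $\alpha$ in the (necessarily $\Gamma$-large) common stabilizer of $a_\Gamma$, $R_e$, $h$, and $g_e$ must send $S_e$ to an element $\alpha(S_e)$ which again lies in $a_\Gamma$ under the same tag $g_e$.

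Passing back to $V(\term)$ via $(-)^\circ$ and invoking Lemma~\ref{iplemma} on the injective presentation of $a_{\mathrm{ip}}$ together with $h \Vdash a_{\mathrm{ip}} = a_\Gamma^\circ$, any two entries in $a_\Gamma^\circ$ indexed by the same $g_e$ are realizably equal in $V(\term)$ via a realizer uniform in $h$ and $g_e$. In particular $S_e^\circ$ and $\alpha(S_e)^\circ = \alpha(S_e^\circ)$ are realizably equal in $V(\term)$. To extract the contradiction, I would then pick a type-2 identity $e$ and, using Lemma~\ref{presatoms}, a type-1 input $f$ for which $\lmim{e.f}$ contains an atom $\xi_n$ and no other $\xi_i$, along with an $\alpha$ swapping $\xi_n$ with a fresh atom $\xi_{n'}$. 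This would convert the forced realizable equality $S_e^\circ = \alpha(S_e^\circ)$ into a realizable identification of two genuinely distinct natural numbers selected by $S_e$ on a common input $\phi$, which is absurd.

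The main obstacle is the construction in the second paragraph: designing the family $R_e$ so that each $R_e$ is completely symmetric as a set in $\vg{\term}$ (so that the passage through $(-)^\circ$ preserves the intended multivalued-relation structure in $V(\term)$), while simultaneously arranging that every possible choice of $S_e \subseteq R_e$ is compelled to record, through the $\zeta$-rule, an atom that can be disturbed by a finite-support automorphism. Balancing these opposing requirements — total symmetry of $R_e$ versus essential atom-dependence of $S_e$ — and then routing the resulting equality produced by Lemma~\ref{iplemma} through Lemma~\ref{presatoms} into a concrete numerical clash is the combinatorial core of the argument and the bulk of the bookkeeping across the three realizability relations.
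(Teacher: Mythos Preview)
Your proposal addresses Theorem~\ref{noexistenceinstance} rather than the corollary. In the paper the corollary is a five-line reduction: wEP applied to the fullness instance yields some $\psi$, and setting $\chi(w) := (\forall x)(\psi(x) \rightarrow w = \bigcup x)$ produces a $\chi$ contradicting the theorem. Your opening ``assume such a $\chi(x)$ exists'' is already the theorem's hypothesis, and you never carry out the passage from wEP to $\chi$.

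Read as a sketch of the theorem, your architecture (two submodels, uniqueness forces equality in $V(\term)$, then Lemma~\ref{iplemma} and Lemma~\ref{presatoms}) matches the paper's, but the core is misarranged. You index the test relations by a type-2 identity $e$ chosen up front and later apply Lemma~\ref{presatoms} to that same $e$; but nothing ties an arbitrary $e$ to the realizers that $\chi$ actually produces, and the atom $\xi_n$ you extract has no reason to lie outside the supports of $a_\Gamma$, $R_e$, $h$, and $g_e$, so the swap $\alpha$ you want need not stabilise them. The paper runs the dependencies the other way. It builds a family $R_N$, indexed by $N \in \omega$ with support $\{\xi_1,\dots,\xi_N\}$ (these are completely symmetric too, so your ``main obstacle'' is not where the difficulty lies), together with a realizer for $R_N \in \mvnnn$ that is \emph{uniform in $N$}. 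The fullness realizers therefore select witnesses $c_N$ with a single tag $f$ and fixed auxiliary realizers for every $N$; one then chooses $N$ large enough to absorb all supports already in play, and only \emph{after} that manufactures the relevant type-2 identity from those very realizers (first paragraph of the proof of Lemma~\ref{rnlemma}) and invokes Lemma~\ref{presatoms} for an $n$ deliberately placed just above $N$. The whole point of the uniform family is to let the troublesome atom be chosen after every support has committed itself.
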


\begin{proof}
  We know that
  \[ \czf \vdash (\exists x) (x \subseteq
    \mv{\mathbb{N}^{\mathbb{N}}}{\mathbb{N}} \wedge
    (\forall R \in
    \mv{\mathbb{N}^{\mathbb{N}}}{\mathbb{N}})(\exists S \in x)\,S \subseteq
    R) \]
  Suppose that there is some $\psi(x)$ such that 
  \begin{eqnarray*}
    \czf & \vdash & (\exists ! x) \,\psi(x) \\
    \czf & \vdash & (\forall x)\; \psi(x) \rightarrow (\exists z)\,z \in x \\
    \czf & \vdash & (\forall x) \;\psi(x) \rightarrow (\forall z \in x) \\
    & & \quad ( z \subseteq
    \mv{\mathbb{N}^{\mathbb{N}}}{\mathbb{N}} \wedge
    (\forall R \in
    \mv{\mathbb{N}^{\mathbb{N}}}{\mathbb{N}})(\exists S \in z)\,S \subseteq
    R)
  \end{eqnarray*}
  Then by taking $\chi(w)$ to be $(\forall x)\, (\psi(x) \rightarrow w =
  \bigcup x)$, we would get
  \[
  \czf \vdash (\exists ! w)\,\chi(w)
  \]
  and 
  \[
  \czf \vdash \chi(w) \;\rightarrow\; w \subseteq
  \mv{\mathbb{N}^{\mathbb{N}}}{\mathbb{N}} \wedge
  (\forall R \in
  \mv{\mathbb{N}^{\mathbb{N}}}{\mathbb{N}})(\exists S \in w)\,S \subseteq
  R
  \]
  contradicting the theorem.
\end{proof}

\paragraph{Proof of theorem \ref{noexistenceinstance}}

Assume that there is such a $\chi(x)$.

Let $\term$ be the pca from section \ref{sectionterm} and let $G$ be
the group of all automorphisms obtained from permutations of $\omega$
that are the identity except on some finite set, as in section
\ref{automterm}. Let $\Gamma$ be the normal filter generated by
$\{\stab{G}{\xi_n} \;|\; n \in \omega\}$. Note that $\stab{G}{x} \in
\Gamma$ exactly when there is some finite set $F \subseteq \{\xi_i
\;|\; i \in \omega \}$ such that whenever an automorphism $\alpha$
fixes every $\xi_i \in F$, $\alpha$ also fixes $x$. This is sometimes
referred to as $x$ being of \emph{finite support relative to $\{\xi_i
  \;|\; i \in \omega \}$}.

By the soundness theorems, there must be $\cip \in \vip{\term}$ and
$\cg \in \vgt$ such that
\begin{eqnarray*}
\vip{\term} & \models & \chi(\cip) \\
\vgt & \models & \chi(\cg) \\
\end{eqnarray*}

Hence we must have that 
\[
\vt \models \chi(\cip) \wedge \chi((\cg)^\circ)
\]
and so
\[
\vt \models \cip = (\cg)^\circ
\]

This allows to apply lemma \ref{iplemma} and deduce that there is some
$e_0$ such that for any $\langle f, c \rangle, \langle f, c'
\rangle \in (\cg)^\circ$,

\[
e_0.f \Vdash c = c'
\]

In fact this is the only point where we need $\cip$ and we can now
derive a contradiction by examining $\cg$ carefully.

Let $\mathbb{N}$ and $\mathbb{N}^\mathbb{N}$ be as defined in section
\ref{vaint}.  Recall that the elements of $\mathbb{N}^{\mathbb{N}}$
are of the form $\langle f, \overline{f} \rangle$ as described in
section \ref{vaint}.

Write $\zeta_1$ for $\zeta_{\lambda x.x}$ and 
for each $N$, construct $R_N \in \vgt$,
\begin{multline*}
R_N := \{ \langle 0, f, (\overline{f}, \overline{n}) \rangle \;|\; f
\text{ is type 1}, n \leq N, \underline{n} = \zeta_1 f \}\;
\cup \\
\{ \langle 0, f, (\overline{f}, \overline{n}) \rangle \;|\; f
\text{ is type 1}, n > N, \zeta_1 f = \underline{m} \text{ for some } m > N \}
\end{multline*}
(where we write $(,)$ for $\vgt$'s internal notion of ordered pairs)

\begin{lemma}
\label{rnprops}
We have constructed these $R_N$ so that the following hold:

\begin{enumerate}
\item $R_N \in \vgt$. In fact $\bigcap_{i=1}^N \stab{G}{\xi_i}
  \subseteq \stab{G}{R_N}$.
\item There is some $e_1 \in \term$ such that for all $N$, $e_1 \vdzg
  R_N \in \mvnnn$.
\item Suppose that $\langle 0, f, a \rangle \in R_N$ and $\xi_i$ occurs
  in $f$ only if $i \leq N$. Then $a = (\overline{f}, \overline{n})$
  where $\underline{n} = \zeta_1 f$ (and $n \leq N$).
\end{enumerate}
\end{lemma}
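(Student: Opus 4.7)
The plan is to prove the three parts in order, with (1) being the most delicate.

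For (1), note first that every entry of $R_N$ is of the form $\langle 0, f, (\overline{f},\overline{n})\rangle$, whose set-part lies in $\vgt$: both $\overline{f}$ and $\overline{n}$ are completely symmetric by the Remark ending Section 3, and the internal ordered pair built from Pair preserves this. So $R_N \in \vgt$ reduces to bounding its stabiliser. I fix $\alpha \in \bigcap_{i=1}^N \stab{G}{\xi_i}$, induced by a permutation $\pi$ of $\omega_{>0}$ pointwise fixing $\{1,\ldots,N\}$. Two observations carry the argument: (i) $\overline{f}$ depends only on the values $f\underline{n} \in \omega$, which $\alpha$ preserves since $\alpha(f)\underline{n} = \alpha(f\underline{n})$ and $\alpha$ fixes numerals; hence $\alpha(\overline{f}) = \overline{f}$, and the action on an entry simplifies to $\alpha(\langle 0,f,(\overline{f},\overline{n})\rangle) = \langle 0,\alpha(f),(\overline{f},\overline{n})\rangle$. (ii) The $\xi$-content of $\alpha(f)$ is $\{\xi_{\pi(j)} : \xi_j \text{ occurs in } f\}$ (the $\zeta_F$ constants move to $\zeta_{F\circ\pi^{-1}}$ but contribute no new $\xi$'s). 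Case-splitting on the two branches of $R_N$: if $n \leq N$ and $\zeta_1 f = \underline{n}$, then every $\xi_j$ in $f$ satisfies $j \leq n \leq N$ and is fixed by $\pi$, giving $\zeta_1\alpha(f) = \underline{n}$; if $\zeta_1 f > N$, then some $\xi_j$ with $j > N$ occurs in $f$ and $\pi(j) > N$, so $\zeta_1\alpha(f) > N$. Either way the transformed entry lies in $R_N$, and running the same argument with $\pi^{-1}$ yields $\alpha(R_N) = R_N$.

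For (2), I will exhibit a realizer $e_1$ independent of $N$, split as $e_1 := \mathbf{p}\, e_{\mathrm{sub}}\, e_{\mathrm{tot}}$. The subset component $e_{\mathrm{sub}}$ is uniform: every entry of $R_N$ has tag $f$ which realizes $\overline{f}\in\mathbb{N}^{\mathbb{N}}$, and its set-part is $(\overline{f},\overline{n})$ with $\underline{n} = \zeta_1 f$ realizing $\overline{n}\in\mathbb{N}$. For totality take $e_{\mathrm{tot}} := (\lambda x).\,\mathbf{p}\,(\zeta_1 x)\,(\mathbf{p}\,x\,\mathbf{i}_r)$: given $f$ realizing $\overline{f}\in\mathbb{N}^{\mathbb{N}}$, this produces the witness $\zeta_1 f = \underline{n}$ together with a realizer of $(\overline{f},\overline{n})\in R_N$. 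The essential point is that $n = \zeta_1 f$ witnesses membership in \emph{either} branch of $R_N$ --- the first when $n \leq N$, the second when $n > N$ --- so the realizer has no need to branch on $N$.

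For (3), the hypothesis that $\xi_i$ occurs in $f$ only when $i \leq N$ forces $\zeta_1 f \leq \underline{N}$ directly from the $\zeta$-rule. This rules out the second branch of $R_N$, whose side-condition requires $\zeta_1 f > N$, so $\langle f,a\rangle$ must come from the first branch and $a = (\overline{f},\overline{n})$ with $\underline{n} = \zeta_1 f$ and $n \leq N$.

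The hard part will be the stabiliser calculation in (1): one has to recognise simultaneously that $\overline{f}$ is already fixed by every automorphism (since it encodes purely numerical data), and that although $\alpha(\zeta_1) = \zeta_{\pi^{-1}} \neq \zeta_1$ in general, the value $\zeta_1\alpha(f)$ is still determined by the permuted $\xi$-content of $f$ in a manner compatible with both branches of $R_N$.
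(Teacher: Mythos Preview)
Your proposal is correct and follows the same approach as the paper's proof, which is extremely terse (essentially ``check each part from the definition''); you have supplied exactly the case analysis on the two branches of $R_N$ and the explicit $\zeta_1$-based realizer that the paper leaves to the reader. One small point of care in part (1): your argument for observation (i) actually establishes $\overline{\alpha(f)} = \overline{f}$ (same graph, same encoding), whereas the equation you state, $\alpha(\overline{f}) = \overline{f}$, follows more directly from the complete symmetry of $\overline{f}$ that you already cite from the Remark at the end of Section~3 --- both facts are true and both are needed (the first to verify the transformed entry matches the template $\langle 0, g, (\overline{g},\overline{m})\rangle$, the second to compute the action), so nothing is wrong, but keep the two straight.
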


\begin{proof}
  For 1, note that each set in the binary union in the definition of
  $R_N$ is preserved by elements of $\bigcap_{i=1}^N \stab{G}{\xi_i}$.

  For 2, note that each $R_N$ can be ``represented'' by
  $\zeta_1$. This can clearly be used to produce a realizer that these
  are multi valued functions.

  Part 3 is clear from the definition.
\end{proof}

We will aim for our contradiction by first showing a lemma stating
that any automorphism satisfying certain properties has to be the
identity. This will use the key lemma from section
\ref{presatomssect} as well as the basic properties of $R_N$. We
will then construct a non trivial automorphism satisfying these
conditions. In this lemma we work over $\vt$ rather than
$\vg{\pcaa}$.

We first prove some basic properties of $V(\term)$ that will be
used in the lemma.
\begin{proposition}
  \label{vtprops}
  \begin{enumerate}
  \item for all $n, m \in \omega$, $V(\term) \models \overline{n} =
    \overline{m}$ implies that $n = m$
  \item for all $a, b \in V(\term)$, $V(\term) \models (a, b) = (a',
    b')$ if and only if $V(\term) \models a = a'$ and $V(\term)
    \models b = b'$
  \item if $g$ and $h$ are type 1 and $V(\term) \models \overline{g} =
    \overline{h}$, then for all $n \in \omega$, $g \underline{n} = h
    \underline{n}$
  \end{enumerate}
\end{proposition}

\begin{proof}
  For 1, first show that for any $n, m \in \omega$, if $V(\term)
  \models \overline{m} = \overline{n}$, then $m \geq n$. We do this by
  induction on $n$. For $n = 0$, note that we always have that $m \geq
  n$. Now suppose that $V(\term) \models \overline{m} = \overline{n +
    1}$. Note that since $\langle \underline{n}, \overline{n} \rangle
  \in \overline{n+1}$, we know there must be some $l < m$ such that
  $V(\term) \models \overline{l} = \overline{n}$. By induction, we may
  assume that $l \geq n$, and deduce that $m \geq n + 1$. By noting
  that $V(\term) \models \overline{m} = \overline{n}$ implies
  $V(\term) \models \overline{n} = \overline{m}$ we deduce the result.

  For 2, we can prove in $\czf$ the theorem that $(x, y) = (x', y')$
  if and only if $x = x'$ and $y = y'$. (See section 3.2 of
  \cite{aczelrathjen} for details). By applying the soundness theorem
  for $\czf$ we deduce that for any $a,b,a',b' \in V(\term)$,
  $V(\term) \models (a, b) = (a', b') \leftrightarrow (a = a' \wedge b
  = b')$. However, this implies the result.

  For 3, let $g$ and $h$ be of type 1, and let $n \in \omega$. Then
  there is $m \in \omega$ such that $g \underline{n} =
  \underline{m}$. By the definition of $\overline{g}$, we have
  $\langle \underline{n}, (\overline{n}, \overline{m}) \rangle \in
  \overline{g}$. By the definition of realizability for equality and
  $V(\term) \models \overline{g} = \overline{h}$, this
  implies that there is some $\langle e, c \rangle \in \overline{h}$
  such that $V(\term) \models (\overline{n}, \overline{m}) = c$. By the
  definition of $\overline{h}$ there must be $n', m' \in \omega$ such
  that $e = \underline{n'}$, $c = (\overline{n'}, \overline{m'})$ and
  $h \underline{n'} = \underline{m'}$. So $V(\term) \models
  (\overline{n}, \overline{m}) = (\overline{n'},
  \overline{m'})$. Applying the previous two parts, we get that $n =
  n'$ and $m = m'$, and deduce that $g \underline{n} = h
  \underline{n}$, as required.
\end{proof}

\begin{lemma}
\label{rnlemma}
  Suppose that $a \in V(\term)$, $N < N' \in \omega$, and $e,f \in
  \term$ are such that 

  \begin{enumerate}
  \item For any $\xi_i$ occurring in $e$ or $f$, $i \leq N$
  \item $\bigcap_{i = 1}^N \stab{G}{\xi_i} \subseteq \stab{G}{a}$
  \item $e \Vdash (\forall x \in a)\,x \in R_{N'}^\circ$
  \item $f \Vdash (\forall x \in \mathbb{N}^{\mathbb{N}})(\exists y
    \in a)(\exists z \in \mathbb{N})\,y = (x, z)$
  \end{enumerate}

  Then, whenever $\alpha \in G$ fixes $\xi_i$ for $i \leq N$ and $i >
  N'$, $\alpha$ must also fix $\xi_i$ for $N < i \leq N'$ and hence
  $\alpha$ must be the identity.
\end{lemma}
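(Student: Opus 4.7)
My plan is to reduce the statement to Preservation of Atoms (Lemma \ref{presatoms}) applied to a type 2 identity built out of $e$ and $f$. Consider
\[
\Phi := (\lambda x).\, (e \cdot ((f \cdot x)_0))_0 \in \term.
\]
I would first verify that $\Phi$ is a type 2 identity. For any type 1 $g \in \term$, hypothesis 4 makes $f \cdot g$ realize $(\exists y \in a)(\exists z \in \mathbb{N})\, y = (\overline{g}, z)$, so $(f \cdot g)_0$ indexes some $b$ with $\langle (f \cdot g)_0, b \rangle \in a$, $((f \cdot g)_1)_0 = \underline{n_g}$ for a natural $n_g$, and $((f \cdot g)_1)_1 \vdone b = (\overline{g}, \overline{n_g})$. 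Hypothesis 3 then makes $e \cdot ((f \cdot g)_0)$ realize $b \in R_{N'}$, so by the definition of $R_{N'}$ the index $h_g := (e \cdot ((f \cdot g)_0))_0$ is a type 1 element of $\term$ and the associated member of $R_{N'}$ has the form $(\overline{h_g}, \overline{m_g})$ with $m_g = \zeta_1 h_g$ when $\zeta_1 h_g \leq N'$. Transitivity of realizable equality applied to $b = (\overline{g}, \overline{n_g}) = (\overline{h_g}, \overline{m_g})$ forces $n_g = m_g$ and $\overline{g} = \overline{h_g}$ realizably; unwinding the standard representation of Baire space gives $g \cdot \underline{k} = h_g \cdot \underline{k}$ for every $k$, so $\Phi \cdot g = h_g$ computes the same numerical function as $g$, as required.

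Second, for each $i$ with $N < i \leq N'$, I would apply Preservation of Atoms to $\Phi$ to obtain a type 1 $g_i$ such that $h_i := \Phi \cdot g_i$ contains $\xi_i$ as a subterm and no other atom $\xi_j$. Since $\zeta_1 h_i = i \leq N'$, only the first clause in the definition of $R_{N'}$ is compatible with this index, so $m_i = i$, and hence $n_i = i$ as well.

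Third, I would use the automorphism $\alpha$. Hypothesis 1 ensures $\alpha$ fixes $e$, $f$, and therefore $\Phi$; hypothesis 2 together with $\alpha$ fixing $\xi_j$ for $j \leq N$ gives $\alpha(a) = a$; and $\alpha$ preserves $R_{N'}$ by Lemma \ref{rnprops}(1). Let $\sigma$ be the permutation of $\{N+1, \ldots, N'\}$ underlying $\alpha$. Since $\alpha$ fixes every numeral, extracting from $f \cdot \alpha(g_i) = \alpha(f \cdot g_i)$ gives $n_{\alpha(g_i)} = n_i = i$. On the other hand, $\Phi \cdot \alpha(g_i) = \alpha(\Phi \cdot g_i) = \alpha(h_i)$ contains only $\xi_{\sigma(i)}$ as a $\xi$-atom, so $\zeta_1 h_{\alpha(g_i)} = \sigma(i) \leq N'$, giving $m_{\alpha(g_i)} = \sigma(i)$. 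Equating $n_{\alpha(g_i)} = m_{\alpha(g_i)}$ yields $i = \sigma(i)$; since $i$ was arbitrary in $(N, N']$, $\sigma$ must be the identity, and so $\alpha$ is the identity automorphism.

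I expect the main obstacle to be verifying that $\Phi$ really is a type 2 identity: this is where all four hypotheses come together, and it rests on tracing realizable equality of elements of Baire space back to honest equality of the underlying numerical functions in $\term$. Once this is in place, the symmetry argument using $\alpha$ is routine and Preservation of Atoms does the rest.
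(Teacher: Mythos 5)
Your proof follows the paper's essentially verbatim: same candidate type~2 identity $(\lambda x).(e(f x)_0)_0$, same appeal to Preservation of Atoms to manufacture a witness $g_i$ with $\zeta_1\bigl(\Phi \cdot g_i\bigr) = \underline{i}$, and the same symmetry step pushing $\alpha$ through the pipeline and reading off the $\zeta_1$-value of the index in $R_{N'}^\circ$ via Lemma~\ref{rnprops}(3); the only cosmetic difference is that the paper runs it as a proof by contradiction from $\alpha(\xi_n)\neq\xi_n$ while you show $\sigma(i)=i$ directly for each $i$. One parenthetical in your third step is false but harmless because you never use it: $\alpha$ need \emph{not} preserve $R_{N'}$ --- Lemma~\ref{rnprops}(1) gives $\bigcap_{i=1}^{N'}\stab{G}{\xi_i}\subseteq\stab{G}{R_{N'}}$, and $\alpha$ fixing $\xi_1,\ldots,\xi_N$ alone does not put it in that intersection; what you actually rely on is only the structural description of $R_{N'}^\circ$-membership from part~(3), which holds without any $\alpha$-invariance of $R_{N'}$ at all.
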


\begin{proof}
  We first check that $\lambda x.(e(f x)_0)_0 \in \term$ is a type 2
  identity.

  Let $g$ be type 1. Then $\langle g, \overline{g} \rangle \in
  \mathbb{N}^\mathbb{N}$. Therefore there is $b$ such that $\langle (f
  g)_0, b \rangle \in a$ and $(f g)_1 \Vdash (\exists z \in \mathbb{N})\,b =
  (\overline{g}, z)$ by expanding out the definition of realizability
  for bounded universal and existential quantifiers in assumption 4.

  Let $h$ be the element of $\term$ denoted by $(e (f g)_0)_0$. (Note
  that we can show the term does denote by applying the definition of
  realizability and assumptions 3 and 4).  Then we know that there is
  some $c$ such that $\langle h, c \rangle \in R_{N'}^\circ$ and $(e(f
  g)_0)_1 \Vdash b = c$, by noting that $\langle (f g)_0, b \rangle
  \in a$ and applying assumption 3. By the definition of
  $R_{N'}$ we know that $c$ must be of the form $(\overline{h},
  \overline{m})$ for some $m \in \omega$. From above we know that $\vt
  \models (\exists z \in \mathbb{N})\,b = (\overline{g}, z)$ and $\vt
  \models b = (\overline{h}, \overline{m})$, so we deduce that $\vt
  \models \overline{g} = \overline{h}$ by applying part 2 of
  proposition \ref{vtprops}. Hence $g$ and $h$ must have the same
  graphs as type 1 elements by part 3 of proposition \ref{vtprops}, and
  so since $(\lambda x.(e (f x)_0)_0) g$ denotes $h$, $\lambda x.(e (f
  x)_0)_0$ must be a type 2 identity as required.

  Now let $\alpha \in G$ fix $\xi_i$ for $i \leq N$ and $i >
  N'$. Suppose for a contradiction that there is some $n$ with $N < n
  < N'$ such that $\alpha(\xi_n) \neq \xi_n$. 

  By applying lemma \ref{presatoms} we can find a type 1 $g \in \term$
  such that $g$ only contains $\xi_i$ for $i = n$ and such that
  $\xi_n$ does occur in the element of $\term$ denoted by $(e (f
  g)_0)_0$ (or equivalently by proposition \ref{termdefsequiv} the
  result of reducing $(e (f g)_0)_0$ by leftmost innermost
  reduction). Let $b$, $h$, and $c$ be as above, but for this
  particular $g$.

  Since $e$ and $f$ only contain $\xi_i$ for $i \leq N$ and $g$ only
  contains $\xi_n$, we know that
  $h$ can only contain $\xi_i$ for $i \leq N$ or $i = n$, because
  applying leftmost innermost reduction can't introduce any new $\xi_i$
  that weren't already in the term $(e (f g)_0)_0$. Since we
  have guaranteed that $h$ does contain $\xi_n$, we know that $\zeta_1
  h = \underline{n}$.  In particular $n \leq N'$, so we know from part
  3 of lemma \ref{rnprops} that
  $c$ must be of the form $(\overline{h},
  \overline{n})$.

  Since $\alpha$ fixes $\xi_i$ for $i \leq N$ we know from 
  assumption 2 that $\alpha$ also fixes $a$. Therefore, since $\langle
  (f g)_0, b \rangle \in a$ we must also have $\langle \alpha((f g)_0),
  \alpha(b) \rangle \in a$. Hence if $h'$ is the element of $\term$
  denoted by $(e \alpha((f g)_0))_0$ we
  know that there is some $c'$ with $\langle h', c' \rangle \in
  R_{N'}^\circ$ and $\vt \models \alpha(b) = c'$ by applying
  assumption 3 and the definition of $R_{N'}$ as we did before.

  Since $\alpha$ fixes $\xi_i$ for $i \leq N$ we know that $\xi_i$ can
  only occur in $e$ and $\alpha(f)$ for $i \leq N$. Since $\alpha$
  fixes $\xi_i$ for $i > N'$, we know that $\alpha(\xi_n)$ must be
  some $\xi_i$ where $i \leq N'$. Hence $h'$ only contains $\xi_i$
  for $i \leq N'$. Furthermore neither $\alpha(f)$ nor $\alpha(g)$
  contains $\xi_n$ and from the assumption that $\alpha(\xi_n) \neq
  \xi_n$ we also know that $\xi_n$ does not occur in
  $\alpha(g)$. Hence $\zeta_1 h' = \underline{m}$ for some $m \leq N'$
  with $m \neq n$. Again from part 3 of lemma \ref{rnprops}, we know
  therefore that $c'$ is of the form $(\overline{h'},
  \overline{m})$. In particular, since $V(\term) \models \alpha(b) =
  c'$, this means that $V(\term) \models
  \alpha(b) = (\overline{h'}, \overline{m})$.

  But then since $\vt \models b = ( \overline{h}, \overline{n} )$, we
  have that $\vt \models \alpha(b) = (\alpha(\overline{h}),
  \alpha(\overline{n}))$ by proposition
  \ref{automspresrealiz}. Together with $\vt \models \alpha(b) =
  (\overline{h'}, \overline{m})$ and applying part 2 of proposition
  \ref{vtprops}, this gives $\vt \models \alpha(\overline{n}) =
  \overline{m}$. In fact $\alpha(\overline{n}) = \overline{n}$, since
  $\overline{\omega}$ was constructed only using elements of $\term$
  fixed by all automorphisms.  So $\vt \models \overline{n} =
  \overline{m}$, and applying part 1 of proposition \ref{vtprops} we
  get $n = m$. But this is a contradiction since $m \neq n$.
\end{proof}

Since we know $\vgt \models (\forall x \in \mvnnn)(\exists y \in \cg)\,( y
\subseteq x \,\wedge\, y \in \mvnnn)$, there must be $f, e_2, e_3 \in
\term$ and $c_n$ such that for all $n$,
\begin{eqnarray*}
\langle 0, f, c_n \rangle & \in & \cg \\
e_2 & \vdzg & (\forall x \in c_n)\,x \in R_n \\
e_3 & \vdzg & (\forall x \in \mathbb{N}^{\mathbb{N}})(\exists y \in
c_n)(\exists z \in \mathbb{N})\,y = (x, z)
\end{eqnarray*}

In particular we know that for all $n$, $\stab{G}{c_n} \in \Gamma$ and
hence by proposition \ref{sympreserved} $\stab{G}{c_n^\circ } \in
\Gamma$. From now on we will work entirely over $\vt$.

Recall that we chose $e_0$ so that for all $m$ and $n$,
\[
e_0 f \Vdash c_m^\circ = c_n^\circ
\]
and so by substitution we can use $e_0 f$ and $e_2$ to construct $e_4$
such that for all $m$ and $n$
\[
e_4 \Vdash (\forall x \in c_m^\circ)\,x \in R_n^\circ
\]

Now let $N$ be large enough such that the list $\xi_1,\ldots,\xi_N$
includes any $\xi_n$ in a support of $c_0^\circ$, or appearing in $e_0, e_1
e_2, e_3$ or $e_4$.

Let $N' = N + 2$.

Note that we have
\[
e_4 \Vdash (\forall x \in c_0^\circ)\,x \in R_{N'}^\circ
\]

Let $\alpha$ be the automorphism that swaps round $\xi_{N + 1}$ and
$\xi_{N + 2}$, fixing everything else. Then we know that $\alpha$
fixes $\xi_i$ for $i \leq N$ (and hence also fixes $c_0^\circ$ and any
$\xi_i$ occurring in $e_3$ and $e_4$) and fixes $\xi_i$ for $i >
N'$. However, clearly $\alpha$ does not fix $\xi_{N +
  1}$. Hence we can finally get a contradiction by applying lemma
\ref{rnlemma}.

\qed

\section{Conclusion}
\subsection{What the failure of EP means}
We have shown that $\czf$ does not have EP, or indeed even wEP. Since
EP was described in the introduction as a property to be expected from
constructive formal theories, one might ask if its failure indicates
some weakness in $\czf$ as a constructive theory. The
short answer is no: $\czf$ is still a sound foundation for
constructive mathematics. 

The main theorem of this paper shows essentially that $\czf$ asserts
the existence of mathematical objects that it does not know how to
construct. However, $\czf$ does have natural interpretations in which
these objects can be constructed. One example is Aczel's original
interpretation of $\czf$ into type theory in \cite{Aczel78}. Here, the
sets asserted in the fullness axiom are sets of those multivalued
functions that arise from elements of a particular exponential
type. Another (related) interpretation is Rathjen's ``formulas as
classes'' in \cite{formulaeasclasses}, in which $\czf$ is interpreted
into $\czfexp$. In this example the full sets appear as exponentials
in the background universe. In \cite{rathjentupailo06} Rathjen and
Tupailo showed using these techniques that $\czf$ with a choice
principle $\mathbf{\Pi \Sigma - AC}$ has a form of the existence
property.

\subsection{Further Work}
In this paper we used the axioms of $\zfc$ in several places, with
what appears to be an essential use of choice in the soundness of
strong collection in $\vip{\pcaa}$. This means that the final result
of the paper was only proved on the assumption that $\zfc$ is
consistent. We conjecture that in fact this assumption is unwarranted
and with a more sophisticated construction the result can shown only
on the assumption that $\czf$ is consistent.

Choice principles tend to fail in $\vg{\pcaa}$ (in fact one can
check that countable choice fails in $\vgt$), so it remains open
whether $\czf$ together with particular choice principles have EP.

\section{Acknowledgements}
\label{sec:acknowledgements}

I would like to thank my PhD supervisor, Michael Rathjen for helping
me greatly in preparing this paper. Peter Aczel also provided some
helpful comments. This work was supported by an EPSRC Doctoral
Training Grant. I would also like to thank the anonymous referee, who
made several useful suggestions for improving the paper.

\bibliographystyle{abbrv}
\bibliography{realizability}{}

\end{document}